\newtheorem{theorem}{Theorem}[section]
\newtheorem{lemma}[theorem]{Lemma}
\newtheorem{definition}[theorem]{Definition}
\newtheorem{remark}[theorem]{\it Remark}
\newtheorem{example}[theorem]{Example}
\newtheorem{proposition}[theorem]{Proposition}
\def\C{\mathbb{C}}
\def\R{\mathbb{R}}
\def\Z{\mathbb{Z}}
\def\Q{\mathbb{Q}}
\def\tree{\mathcal{T}}
\title[Polyhedra for character varieties and configuration spaces]{Newton-Okounkov polyhedra for character varieties and configuration spaces}
\author{Christopher Manon} 
\begin{document}

\begin{abstract}
We construct families of Newton-Okounkov bodies for the free group character varieties and configuration spaces of any connected reductive group.  We use this construction to give a proof that these spaces are Cohen-Macaulay. 
\end{abstract}

%commacheck

\maketitle

\tableofcontents

\smallskip
\noindent
Keywords: Character Variety, Configuration Space, Newton-Okounkov Body

\section{Introduction}

The Newton-Okounkov body construction was defined by Okounkov in \cite{Ok} to study Hilbert functions of spherical varieties. This construction has been reintroduced in the recent papers of Lazarsfeld, Musta$\c{t} \check{a}$ \cite{LM} and Kaveh, Khovanskii \cite{KK}, and has become the centerpiece of several efforts to further integrate algebra, geometry, and combinatorics. A Newton-Okounkov body is a convex set $C_v$ associated to a commutative domain $A$ of finite Krull dimension by taking the convex hull of the image of a maximal rank valuation $v: A \to \Z^M \subset \R^M.$    Similarly, a compact, convex Newton-Okounkov body $C_v(\mathcal{L})$ can be defined for a projective variety $X$ and a choice of ample line bundle $\mathcal{L}$, for both of these constructions see Section \ref{background}.   As their name suggests, the Newton-Okounkov body is a generalization (see \cite{K1}) of the Newton polyhedron of an affine or projective toric variety.   In keeping with the analogy to the toric case, the Newton-Okounkov body serves as a discrete model of a variety, and a useful tool to study combinatorial and algebraic properties of its coordinate ring.

 The image $v(A) \subset \Z^M$ of a maximal rank valuation contains $0$ and is closed under addition, making it into an affine semigroup. When $v(A)$ is finitely generated the Newton-Okounkov body is polyhedral and there is flat degeneration $Spec(A) \Rightarrow X_{C_v}$, where $X_{C_v}$ is the toric variety attached to $C_v$.   In this way, Newton-Okounkov bodies provide a mechanism to produce toric degenerations of varieties, these are a useful tool for studying algebraic presentations of $A$ and studying the singularities of $Spec(A)$ (see Theorem \ref{ACM} below).  In this case, Harada and Kaveh have linked Newton-Okounkov body to symplectic geometry by using the degeneration to define integrable systems in $X$ when it is smooth and projective, \cite{HK}.  The Newton-Okounkov body is also useful as a combinatorial tool, as the set $v(A) \subset C_v$ provides a polyhedral labelling of a basis of $A$ which can be brought to bear when the underlying vector space of $A$ has an enumerative meaning, indeed this was what motivated their introduction by Okounkov in \cite{Ok}.

Newton-Okounkov bodies are one of several ways valuations are used to attach polyhedral geometry to structures in commutative algebra and algebraic geometry. In one of the first results of this type, Morgan and Shalen \cite{MS} reproduce a mysterious piecewise-linear compactification of Teichm\"uller space defined by Thurston as a polyhedral complex of valuations on the coordinate ring of one of the spaces we study in this paper, a complex character variety.   The character variety $\mathcal{X}(\pi, G)$ associated to a finitely generated group $\pi$ and a connected reductive group $G$  is the moduli space of representations $\rho:\pi \to G$.   When $\pi$ is the fundamental group of a smooth manifold $M$, $\mathcal{X}(\pi, G)$ is the moduli space of flat, topological principal $G$ bundles on $M$.  For this reason, character varieties appear as natural moduli spaces in subjects which blend topology and geometry, such as Teichm\"uller theory and the theory of geometric structures \cite{FG}, \cite{G1}, \cite{G2}.  We use combinatorial elements from representation theory to build Newton-Okounkov bodies for $\mathcal{X}(F_g, G)$, where $F_g$ is a free group.    

For a graph $\Gamma$, always assumed to be connected, we let $E(\Gamma)$ and $V(\Gamma)$ be the sets of edges and vertices, respectively.   Recall that a leaf is a vertex which is connected to precisely one edge, and a spanning tree $\tree \subset \Gamma$ is a subtree such that $V(\tree) = V(\Gamma).$  See Section \ref{backgroundreptheory} for the definition of the set $R(w_0)$ which appears below.

\begin{theorem}\label{character}
To the following information we associate a valuation $v_{\bold{i}, \Gamma}$ on $\C[\mathcal{X}(F_g, G)]$ with a convex, polyhedral Newton-Okounkov body $C_{\bold{i}}(\Gamma)$:

\begin{enumerate}
\item A trivalent graph $\Gamma$ with no leaves and $\beta_1(\Gamma) = g$.\\
\item Total orderings on the edges $E(\Gamma)$ and vertices $V(\Gamma)$ of $\Gamma.$\\
\item A spanning tree $\tree \subset \Gamma.$\\
\item An orientation on each edge in the set $\vec{e} = E(\Gamma) \setminus E(\tree)$\\
\item An assignment $\bold{i}: V(\Gamma) \to R(w_0)$ of reduced decomposition of the longest word $w_0$ in the Weyl group of $G$ to each vertex $v \in V(\Gamma)$.\\
\end{enumerate}

\end{theorem}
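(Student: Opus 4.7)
The plan is to construct $v_{\mathbf{i}, \Gamma}$ in three stages: first decompose $\C[\mathcal{X}(F_g, G)]$ as a graph algebra attached to $\Gamma$, then use the reduced words $\mathbf{i}(v)$ to produce a valuation at each trivalent vertex, and finally assemble these local valuations into a global one using the total orderings, the spanning tree $\tree$, and the orientations on $\vec{e}$. The first stage rests on the standard graph-algebra identification
\[
\C[\mathcal{X}(F_g, G)] \;\cong\; \bigoplus_{\lambda: E(\Gamma) \to \Lambda^+(G)} \bigotimes_{v \in V(\Gamma)} \bigl(V_{\lambda(e_1(v))} \otimes V_{\lambda(e_2(v))} \otimes V_{\lambda(e_3(v))}\bigr)^G,
\]
where each edge incident to $v$ contributes either $V_{\lambda(e)}$ or $V_{\lambda(e)}^*$ according to whether it points into or out of $v$ under the orientations induced by $\tree$ and $\vec{e}$. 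This presentation follows from the identification $F_g \cong \pi_1(\Gamma/\tree)$ together with the tensor factorization of $\C[G^g]^G$ along matching edge labels.

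For the local data, at each vertex $v$ the reduced word $\mathbf{i}(v) \in R(w_0)$ produces a Berenstein-Zelevinsky / string-type valuation on $(V_{\lambda_1} \otimes V_{\lambda_2} \otimes V_{\lambda_3})^G$, whose image semigroup realizes the tensor product multiplicity polytope for $[V_{\lambda_3}^{\,*} : V_{\lambda_1} \otimes V_{\lambda_2}]$; as the weights vary, these polytopes assemble into a rational polyhedral cone $C_{\mathbf{i}(v)}$. I would then use the total orderings on $E(\Gamma)$ and $V(\Gamma)$ to lexicographically concatenate the data: first record the edge labels $\lambda \in \Lambda^+(G)^{E(\Gamma)}$ in the prescribed edge order, then in the prescribed vertex order record the local string value at each vertex. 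The resulting target cone $C_{\mathbf{i}}(\Gamma) \subset \Z^M$ is carved out by the vertex-wise Berenstein-Zelevinsky inequalities in the coordinates labeled by $v$, coupled to the edge-label coordinates through the incidence structure of $\Gamma$.

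The main obstacle will be verifying that this assembly is a valuation of full rank $M = \dim \mathcal{X}(F_g, G)$. The valuation property reduces to compatibility of the lexicographic order on $\Z^M$ with graph-algebra multiplication, which is exactly what the orderings on $E(\Gamma)$ and $V(\Gamma)$ provide; here the delicate point is to check that products of pure tensors in distinct isotypic components have leading term equal to the sum of their valuations, which requires picking apart the Clebsch-Gordan projection vertex by vertex in the chosen order. The full-rank statement is an Euler-characteristic bookkeeping for the trivalent graph $\Gamma$, balancing local vertex-cone dimensions (each of size $\ell(w_0)$) against the freedom in the edge labels (each of size $\t{rank}(G)$) and against the constraints imposed by the triple-invariant spaces; polyhedrality of $C_{\mathbf{i}}(\Gamma)$ is then inherited from the polyhedrality of each $C_{\mathbf{i}(v)}$ together with the linear matching of edge-label coordinates across adjacent vertices.
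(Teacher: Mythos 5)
Your overall plan — graph-algebra decomposition, string-parameter valuations at vertices, lexicographic concatenation ordered by the edge and vertex orders — follows the same route as the paper, which builds the filtration in two stages and composes them via Proposition~\ref{compositeprop}. However, there is a genuine gap at the very step you flag as delicate. You observe that the direct sum
\[
\C[\mathcal{X}(F_g, G)] \cong \bigoplus_{\lambda: E(\Gamma)\to\Delta}\bigotimes_{v\in V(\Gamma)} W(\lambda(v,i),\lambda(v,j),\lambda(v,k))
\]
holds as a vector space decomposition, and you note that one must check that products of pure tensors in distinct isotypic components behave well under the proposed valuation, which you say ``requires picking apart the Clebsch-Gordan projection vertex by vertex.'' That is precisely the hard content, and the proposal leaves it unresolved. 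The paper does not prove this by analyzing Clebsch-Gordan projections directly; it invokes Popov's horospherical contraction (Proposition~\ref{hcontract}, via Grosshans) to produce a $G\times G$-stable filtration on $\C[G]$ whose associated graded is $\C[T\backslash(G/U_+\times U_-\backslash G)]$, a domain. This single fact is what makes the branching filtration \emph{strong} — i.e.\ the associated graded algebra is the graph algebra $\C[P_\Gamma(G)]$ and is a domain — with no term-by-term Clebsch-Gordan analysis required. Without horospherical contraction or an equivalent device, your first stage is not established.

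A second, related omission: the passage from the per-vertex string valuations on $\prod_{v}\C[P_3(G)]$ down to the quotient graph algebra $\C[P_\Gamma(G)] = \left[\prod_v\C[P_3(G)]\right]^{T^{E(\Gamma)}}$ requires that each vertex valuation be $T^3$-invariant; this is the content of Theorem~\ref{3tensor} and is what lets the filtration descend to the torus-invariant subalgebra. Your proposal never mentions torus equivariance of the string valuation, so the transition from the local data to a global filtration on the quotient is unjustified as written. Finally, the role of the spanning tree $\tree$ and the orientation on $\vec{e}=E(\Gamma)\setminus E(\tree)$ in the paper is quite specific (Proposition~\ref{characterstep1}): each nontree edge $e_i$ is split into two half-edges using its orientation, producing a trivalent tree $\tree'$ with $2g$ leaves, so that $\C[\mathcal{X}(F_g,G)]$ is filtered with associated graded $\C[P_{\tree'}(G)/T^g]$; the orientation fixes which half-edges pair under the $T^g$ action. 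Your description of the orientation as dictating whether $V_{\lambda(e)}$ or $V_{\lambda(e)}^*$ appears at $v$ is in the right spirit but does not capture this edge-splitting mechanism, which is what actually reduces the character variety to the configuration-space picture.
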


The methods we use to prove Theorem \ref{character} apply with little modification to another class of varieties, the configuration spaces $P_{\vec{\lambda}}(G)$ of points on flag varieties of $G$. For what follows see Section \ref{backgroundreptheory} and the book of Fulton and Harris \cite{FH} for the relevant background in representation theory, and the books of Mumford, Fogarty and Kirwan \cite{MFK} and Dolgachev \cite{D} for the necessary background on Geometric Invariant Theory $(GIT)$.  From now on $\vec{\lambda} = \lambda_1, \ldots, \lambda_n \in \Delta$ denotes a tuple of dominant weights of $G$ with associated highest weight representations $V(\lambda_1), \ldots, V(\lambda_n)$.  A central problem of combinatorial representation theory is to count the dimension of the space of invariant tensors:

\begin{equation}
(V(\lambda_1) \otimes \ldots \otimes V(\lambda_n))^G \subset V(\lambda_1) \otimes \ldots \otimes V(\lambda_n)\\
\end{equation}

Let $P_1, \ldots, P_n$ be the parabolic subgroups which stabilize the highest weight vectors in the representations $V(\lambda_1), \ldots, V(\lambda_n),$ respectively.  Recall that the flag variety $G/P_i$ has a $G-$linearized line bundle $\mathcal{L}_{\lambda_i}$, with $H^0(G/P_i, \mathcal{L}_{\lambda_i})$ equal to the dual repesentation $V(\lambda_i^*).$ These are the ingredients needed to define $P_{\vec{\lambda}}(G)$ as the following projective $GIT$ quotient:  
 
\begin{equation}
P_{\vec{\lambda}}(G) = G \backslash_{\mathcal{L}_{\lambda_1} \boxtimes \ldots \boxtimes \mathcal{L}_{\lambda_n}} \prod G/P_i\\
\end{equation}

\noindent
This construction turns the problem of counting the dimension of tensor product invariant spaces into the problem of finding the dimension of the space of global sections $H^0(P_{\vec{\lambda}}(G), \mathcal{L}(\vec{\lambda}))$ of the induced line bundle on $P_{\vec{\lambda}}(G).$

To find a solution to this problem, it suffices to find a rule to count the triple tensor product multiplicity spaces $(V(\lambda) \otimes V(\eta) \otimes V(\mu))^G$ (see Section \ref{step2} ).  In type $A$, this is accomplished by various combinatorial rules: (ordered by increasing generality) the Clebsh-Gordon rule for $G = SL_2(\C)$, the Pieri rule for $SL_m(\C)$ when the dominant weight $\lambda$ is a multiple $r_1\omega_1$  of the first fundamental weight, and the Littlewood-Richardson rule for general $SL_m(\C)$ triple tensor product multiplicities.   As a common feature, all of these rules can be realized by assigning to $\lambda, \eta, \mu$ the set of integer points in certain polytopes $P(\lambda, \eta, \mu).$ 

Families of polyhedral counting rules were discovered for general semisimple $G$ by Berenstein and Zelevinksy in \cite{BZ2} (their result applies readily to general connected, reductive $G$). Their proof utilizes a combination of positive geometry and the Lusztig's dual canonical basis.  Our second main theorem realizes the polyhedra of Berenstein and Zelevinsky, along with their generalizations to $n-$fold tensor products, as Newton-Okounkov bodies for the line bundles $\mathcal{L}(\vec{\lambda})$ on the $P_{\vec{\lambda}}(G).$ 

For a tree $\tree,$ we let $L(\tree)$ be the set of leaves, and we let $V^o(\tree) = V(\tree) \setminus L(\tree).$ Similarly, we let $E^o(\tree) \subset E(\tree)$ denote the set of edges which are not connected to a leaf.

\begin{theorem}\label{configuration}
To the following information we associate a polytope $C_{\bold{i}}(\tree, \vec{\lambda^*}),$ realized as the Newton-Okounkov body of a valuation $v_{\bold{i}, \tree}$ on the projective coordinate ring $\C[P_{\vec{\lambda}}(G)] =$ $\bigoplus_{m \geq 0} H^0(P_{\vec{\lambda}}(G), \mathcal{L}(\vec{\lambda})^{\otimes m})$:

\begin{enumerate} 
\item A trivalent tree $\tree$ with an ordering on the set of  leaves $L(\tree)$.\\
\item A total ordering on the non-leaf edges $E^o(\tree)$ and non-leaf vertices $V^o(\tree).$\\
\item An assignment $\bold{i}: V^o(\tree) \to R(w_0)$ of reduced decomposition of the longest word $w_0$ in the Weyl group of $G$ to each vertex $v \in V^o(\tree)$.\\
\end{enumerate}

In particular, the integer points of $C_{\bold{i}}(\tree, \vec{\lambda^*})$ are in bijection with a basis of the invariant tensors $(V(\lambda_1^*) \otimes \ldots \otimes V(\lambda_n^*))^G = H^0(P_{\vec{\lambda}}(G), \mathcal{L}(\vec{\lambda})).$

\end{theorem}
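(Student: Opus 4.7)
The plan is to build $v_{\mathbf{i}, \tree}$ in two nested stages: a global tree-branching stage that records how an invariant tensor decomposes along the internal edges of $\tree$, followed by a local string-parametrization stage at each trivalent vertex using the reduced decomposition $\mathbf{i}(v)$. The global stage is governed by the branching isomorphism
\[
(V(\lambda_1) \otimes \cdots \otimes V(\lambda_n))^G \;\cong\; \bigoplus_{\mu} \bigotimes_{v \in V(\tree)} (V(\mu_{e_1(v)}) \otimes V(\mu_{e_2(v)}) \otimes V(\mu_{e_3(v)}))^G,
\]
where $\mu$ runs over dominant-weight labelings of the internal edges of $\tree$, with leaf edges pinned by $\vec{\lambda}^*$ (and analogously in each degree $m$). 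Using the total ordering on $E(\tree)$, the internal edge labels $\mu$ can be read off as a lexicographic rank function with values in $\Z^{|E(\tree)|}$. I would show this is a valuation on $\C[P_{\vec{\lambda}^*}(G)]$ by checking that the Cartan multiplication of invariant tensors respects the corresponding filtration up to lower-order terms; the associated graded is a \textbf{graph algebra} $A_\tree$ with one triple-invariant tensor factor attached to each vertex of $\tree$.

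For the local stage, at each internal vertex $v$ the triple-invariant algebra $A_v = \bigoplus_{\mu_1, \mu_2, \mu_3} (V(\mu_1) \otimes V(\mu_2) \otimes V(\mu_3))^G$ admits a full-rank valuation built from the reduced decomposition $\mathbf{i}(v)$ using Berenstein–Zelevinsky–Littelmann string polytope / BZ-triangle theory, whose image semigroup has integer points indexed by an explicit basis of the triple-invariant spaces. Concatenating these vertex valuations in the prescribed vertex order, and placing the edge-label coordinates of the previous paragraph first in the lexicographic scheme, produces a $\Z^M$-valued valuation on $A_\tree$ of rank equal to $\dim \Proj \C[P_{\vec{\lambda}^*}(G)]$. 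Pulling this valuation back through the flat degeneration $\C[P_{\vec{\lambda}^*}(G)] \Rightarrow A_\tree$ defines $v_{\mathbf{i}, \tree}$, and the polytope $C_{\mathbf{i}}(\tree, \vec{\lambda})$ is obtained as the slice at $m=1$ of the resulting semigroup cone.

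The bijection between lattice points of $C_{\mathbf{i}}(\tree, \vec{\lambda})$ and a basis of $(V(\lambda_1) \otimes \cdots \otimes V(\lambda_n))^G$ then follows by assembling a branching basis vertex-by-vertex: each admissible $\mu$ contributes a fiber that is a product of local string/BZ integer-point sets, and summing their cardinalities recovers the total invariant dimension via the usual branching identity. The main obstacle I anticipate is the first step — verifying that the tree-branching rank function is genuinely a valuation rather than merely a vector-space filtration on graded components. This requires analyzing how a product of invariants carrying internal labels $\mu$ and $\mu'$ projects into the $(\mu + \mu')$-component modulo strictly smaller labels, and is where one essentially uses that $\tree$ has no loops, so that a single internal edge separates $\tree$ into two halves whose invariants multiply cleanly. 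Once this multiplicativity is in place, the vertex-level string polytope valuations are classical, and the trivalent structure together with the vertex and edge orderings guarantees that the local coordinate systems assemble without interference.
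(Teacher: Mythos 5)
Your two-stage scheme --- a branching filtration recorded on internal edge labels, followed by string-polytope valuations at each trivalent vertex, assembled lexicographically in the given vertex order --- matches the paper's architecture in Sections \ref{step1} and \ref{step2}, and you correctly pinpoint the crux: showing the tree-branching rank function is a genuine valuation, i.e.\ that the associated graded is a domain, rather than merely a vector-space filtration. However, your sketch stops at the assertion that ``a single internal edge separates $\tree$ into two halves whose invariants multiply cleanly.'' This is exactly the claim requiring a nontrivial input, and it is where the paper invokes \emph{horospherical contraction} (Popov, Grosshans): the dominant-weight filtration on $\C[G]$ has associated graded $\C[T\backslash(G/U_+\times U_-\backslash G)]$, which is a domain (Proposition \ref{hcontract}). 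The tree filtration is then built by iterating this along the edges of $\tree$ via factorizations of diagonal maps (Propositions \ref{degrep} and \ref{treeweight}), with the domain property descending at each step. Absent this, your ``multiplies cleanly'' is a hope, not an argument --- the failure mode would be two nonzero invariants carrying internal labels $\mu,\mu'$ whose product projects to zero in the $(\mu+\mu')$-graded piece, which would break multiplicativity of the putative valuation.

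A secondary structural difference worth noting: the paper performs the entire construction on the affine master configuration space $\C[P_n(G)] = \C[G\backslash(G/U)^n]$, where the $\Delta^n$-grading is a direct sum and the $T^n$-stable valuation with image cone $C_{\bold{i}}(\tree)$ is cleanest, and only then specializes the leaf weights to $\vec{\lambda}$ to cut out the compact slice $C_{\bold{i}}(\tree,\vec{\lambda})$. Running the argument directly on $\C[P_{\vec{\lambda}^*}(G)]$, as you propose, obliges you to drag the constraint $m\vec{\lambda}^*$ through every step; it can be made to work, but it hides that $C_{\bold{i}}(\tree)$ is the natural ambient cone and that the theorem is really a $T^n$-equivariant statement about $P_n(G)$.
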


The $C_{\bold{i}}(\tree, \vec{\lambda})$ are cross-sections of an affine Newton-Okounkov body $C_{\bold{i}}(\tree)$ for an affine configuration space $P_n(G),$ defined in Section \ref{step1} (Example \ref{configurationspaces}).  This space acts as a ``master'' space for the configuration spaces, in that any $P_{\vec{\lambda}}(G)$ is obtained from $P_n(G)$ by a right $T^n$ $GIT$ quotient.  Using the same methods as in the proof of Theorem \ref{character}, we produce a $T^n$ invariant valuation $v_{\bold{i}, \tree}$ on $\C[P_n(G)]$ with Newton-Okounkov body $C_{\bold{i}}(\tree)$.

As an application of Theorems \ref{character} and \ref{configuration}, we show that a common structural property is shared by the two families of commutative algebras we consider.   Recall that a local ring $(R, m)$ is said to be Cohen-Macaulay  (CM) when its dimension coincides with its depth, see Chapter $2$ of \cite{BH}.   An affine variety is then Cohen-Macaulay if all of its local rings are CM.  Likewise, a projective variety with a chosen embedding is said to be Arithmetically Cohen-Macaulay if its projective coordinate ring is CM. 

\begin{theorem}\label{ACM}
For $G$ a connected, reductive group over $\C$, the affine varieties $\mathcal{X}(F_g, G)$ and $P_n(G)$ are Cohen-Macaulay.   Moreover, the projective varieties $P_{\vec{\lambda}}(G)$ are arithmetically Cohen-Macaulay. 
\end{theorem}

 Theorem \ref{ACM} follows from Proposition \ref{normalsemigroup}, which shows that the affine semigroups obtained as the images of the coordinate rings $\C[\mathcal{X}(F_g, G)], \C[P_n(G)]$ and the projective coordinate ring $\C[P_{\vec{\lambda}^*}(G)]$ under the valuations $v_{\bold{i}, \tree}, v_{\bold{i}, \Gamma}$ are normal.   Normal affine semigroup algebras are known to be Cohen-Macaulay by a result of Hochster, (see $6.3$ of \cite{BH}), and this property is preserved under flat degenerations.

\subsection{Methods}

We construct the valuations $v_{\bold{i}, \Gamma}$ by building increasing filtrations of the coordinate ring $\C[\mathcal{X}(F_g, G)]$ by a lexicographically ordered free Abelian group in two steps, given in Sections \ref{step1} and \ref{step2}.  These filtrations are shown to be ``strong'' filtrations (see Section \ref{background}), which are naturally associated to valuations by Proposition \ref{equivalenceprop}. 

In Section \ref{step1} we build a filtration inspired from one of the applications of  character varieties to gauge theory.  For a maximal compact subgroup $K \subset G$, so-called $BF$ theory on an appropriately triangulated manifold $M$ is quantized by combinatorial objects known as the spin diagrams of $K$ (equivalently $G$), see \cite{Baez}.

\begin{definition}
Let $\Gamma$ be an oriented graph, a spin diagram with topology $\Gamma$
is the following information:

\begin{enumerate}
\item An assignment $\lambda : E(\Gamma) \to \Delta,$ of dominant weights to the edges of $\Gamma.$\\
\item An assignment of $G-$linear maps $\rho$ to the vertices $v \in V(\Gamma)$ which intertwine the incoming representations $\bigotimes_{e \to v} V(\lambda(e))$ with the outgoing representations $\bigotimes_{v \to f} V(\lambda(f))$.\\
\end{enumerate}

\end{definition}

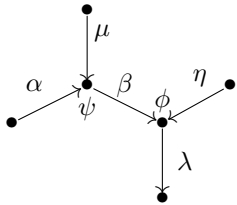
\begin{figure}[htbp]

$$
\begin{xy}
(0, 0)*{\bullet} = "A1";
(0, 10)*{\bullet} = "A2";
(0, 13)*{\phi};
(9, 15)*{\bullet} = "A3";
(-10, 15)*{\bullet} = "A4";
(-10, 12)*{\psi};
(-10, 25)*{\bullet} = "A5";
(-20, 10)*{\bullet} = "A6";
(3, 5)*{\lambda};
(5, 16)*{\eta};
(-8, 22)*{\mu};
(-17, 15)*{\alpha};
(-5, 15)*{\beta};
"A1"; "A2";**\dir{-}? >* \dir{>};
"A2"; "A3";**\dir{-}? >* \dir{>};
"A2"; "A4";**\dir{-}? >* \dir{>};
"A4"; "A5";**\dir{-}? >* \dir{>};
"A4"; "A6";**\dir{-}? >* \dir{>};
\end{xy}
$$\\
\caption{A spin diagram on a $4-$tree.}
\label{spind
}
\end{figure}

The purpose of Section \ref{step1} is to show that for a fixed trivalent $\Gamma$ with $\beta_1(\Gamma) = g$, the spin diagrams with topology $\Gamma$ define a filtration of $\C[\mathcal{X}(F_g, G)]$.  We identify the associated graded algebra of this filtration, and note that it is not an affine semigroup algebra unless the semisimple part of $G$ has a product of copies of $SL_2(\C)$ as its universal cover.

In order to enhance these filtrations, we must carefully choose a basis with amenable multiplication and combinatorial properties in the intertwiner spaces at each vertex $v \in V(\Gamma).$  This is provided by the dual canonical basis constructed by Lusztig, \cite{Lu}. The dual canonical basis can be used to define a basis in each invariant space $B(\mu, \lambda, \eta) \subset (V(\mu) \otimes V(\lambda) \otimes V(\eta))^G$, which are in turn identified with intertwiner spaces, \cite{BZ1}, \cite{BZ2}. We study filtrations built from this basis in Section \ref{step2}. 

For each choice $\bold{i} \in R(w_0)$ of a reduced decomposition of the longest element of the Weyl group of $G$, there is a labelling of the dual canonical basis by tuples of non-negative integers $b \to \vec{t} \in \Z^N$ called string parameters. In this way, the choice $\bold{i}$ assigns the elements of $(V(\mu)\otimes V(\lambda) \otimes V(\eta))^G$ to integer points in a convex polytope $C_{\bold{i}}(\mu, \lambda, \eta)$ studied in \cite{BZ2}.  We use the inequalities of these polytopes to define the polyhedra in Theorems \ref{character} and \ref{configuration}. 

Our use of the dual canonical basis in this role follows previous work of Caldero \cite{C}, and Alexeev, Brion \cite{AB} (see also \cite{K}), who use a filtration on the string parameters of the dual canonical basis to define toric degenerations of schubert varieties and spherical varieties, respectively.  We combine the filtrations from Section \ref{step1} with the string parameter filtrations of Section \ref{step2} to produce maximal rank valuations on $\C[\mathcal{X}(F_g, G)]$ and $\C[P_n(G)]$ with Proposition \ref{compositeprop} in Section \ref{background}, this proves Theorems \ref{character} and \ref{configuration}.

\subsection{Remarks}

Lawton \cite{La} and Sikora \cite{S} have given structure theorems for the coordinate rings $\C[\mathcal{X}(F_g, G)]$, and Lawton, Florentino give descriptions of the topology \cite{FL2} and the singular locus \cite{FL1} of $\mathcal{X}(F_g, G)$ in certain cases.  It would be interesting to relate the degenerations constructed here to a Gr\"obner theory of their defining equations. 

Theorem \ref{configuration} gives a construction of a basis for the tensor product invariant spaces $(V(\lambda_1)\otimes \ldots \otimes V(\lambda_n))^G$ which is labelled by the lattice points in a convex, rational polytope.   Howe, Jackson, Lee, Tan, and Willenbring \cite{HJLT}, \cite{HTW} use a SAGBI construction to achieve this for tensor product invariant spaces in the case $G = SL_m(\C).$  The cone $\Omega_3$ resulting from their construction on triple tensor products is a slice of the cone of Gel'fand-Tsetlin patterns, and is linearly equivalent to $C_{\bold{i}}(3)$ for a particular choice of decomposition $\bold{i}$. The algebraic structure of these cones is not very well understood outside the cases $SL_m(\C)$, $m = 2, 3, 4$.  We also point out that the space $P_n(SL_2(\C))$ is the affine cone of the Pl\"ucker embedding of the Grassmannian variety $Gr_2(\C^n)$, and that the toric degenerations we construct in this case coincide with those  found by Speyer and Sturmfels in \cite{SpSt}.  
 
Other enumeration problems in representation theory could plausibly be studied with the methods in this paper.  Polyhedra which control Levi branching problems $L \subset G$  have been defined by Berenstein and Zelevinsky in \cite{BZ2}.  These can be adapted along the lines of the program used in Sections \ref{step1}, \ref{step2}, and realized as Newton-Okounkov bodies.  

The definition of Newton-Okounkov body we use is more general than the one in \cite{LM} and \cite{KK}, where the valuation used to construct the Newton-Okounkov body comes from a flag of subspaces of the variety.  It would be interesting to realize the tensor product polytope $C_{\bold{i}}(\tree, \vec{\lambda})$ as the Newton-Okounkov body attached to a flag $\mathcal{F}$ in a variety birational to $P_{\vec{\lambda}}(G)$. This has been carried out in \cite{M16} for the valuations we construct here for the space $\mathcal{X}(F_g, SL_2(\C))$.

The work of Harada and Kaveh \cite{HK} suggests that each $C_{\bold{i}}(\tree, \vec{\lambda})$ and $C_{\bold{i}}(\Gamma)$ should be the momentum image of an integrable system in $P_{\vec{\lambda}^*}(G)$ and $\mathcal{X}(F_g, G),$ respectively.   A construction of such an integrable system for each of these polyhedra would be interesting for the symplectic geometry of $\mathcal{X}(F_g, G)$ and $P_{\vec{\lambda}^*}(G)$. It would also be interesting to see geometric relationships between the integrable systems associated to different valuations given by our construction. Partial results in this direction appear in \cite{HMM} and \cite{M16} for $G = SL_2(\C).$  

The isomorphism $\Phi_{\tree, \vec{e}}$ in Proposition \ref{spantreeiso} can be precomposed with the automorphism of $\C[\mathcal{X}(F_g, G)]$ induced by any outer automorphism of the free group $\alpha \in Out(F_g)$ to produce a distinct set of maximal rank valuations. In \cite{M16} we explore this large family of Newton-Okounkov bodies in the case $G = SL_2(\C)$, and its connections with the outer space construction of Culler and Vogtmann \cite{CV}.  Presumably similar spaces could be defined for all reductive $G$ using the results of this paper. 

Finally, we remark that Theorem \ref{configuration} essentially appears in the unplublished
notes \cite{M5}, along with other remarks on the use of valuations in the study of branching problems.

\subsection{Acknowledgements}

We thank Kiumars Kaveh for numerous helpful conversations about Newton-Okounkov bodies. We also thank Sean Lawton and Adam Sikora for useful conversations about character varieties. We thank the reviewer for useful suggestions to improve the exposition.

\section{Background on representation theory}\label{backgroundreptheory}

We introduce the necessary background on reductive groups and their Lie algebras.  We refer the reader to the books of Fulton and Harris \cite{FH}, Dolgachev \cite{D}, and Grosshans \cite{Gr} for structural properties of reductive groups, as well as their actions on commutative algebras. 

 We let $G$ be a connected, complex reductive group with center $Z$, maximal torus $T$ and root system $R.$  The Lie algebras of $Z$ and $T$ are denoted by $\mathfrak{z}$ and $\mathfrak{t}$, respectively.  We choose a set of positive roots $R_+ \subset R \subset \mathfrak{t}^*$ with simple roots $S = \{\alpha_1, \ldots, \alpha_r\}.$

 The Lie algebra $\mathfrak{g}$ of $G$ splits as a direct sum $\mathfrak{g} = \mathfrak{z} \oplus \mathfrak{g}^{ss}$ with $\mathfrak{z}$ the Lie algebra of $Z$ and $\mathfrak{g}^{ss}$ the semisimple part of $\mathfrak{g}.$   The Lie algebra $\mathfrak{g}^{ss}$ further decomposes into a sum of simple Lie algebras $\mathfrak{g}^{ss} = \bigoplus \mathfrak{g}_i.$  We fix a system of Chevallay generators $H_1, \ldots, H_r$, $f_1, \ldots, f_r$, $e_1, \ldots, e_r \in \mathfrak{g}^{ss}$, where $r$ is the rank of $\mathfrak{g}^{ss}$.  The elements $H_i$ are the simple coroots, they form a basis of a Cartan subalgebra $\mathfrak{h} \subset \mathfrak{g}^{ss}$, and together with $\mathfrak{z}$ they span the Lie algebra $\mathfrak{t}$. The elements $f_i$ and $e_i$ are called lowering and raising operators, and they generate nilpotent Lie subalgebras $\mathfrak{u}_-, \mathfrak{u}_+ \subset \mathfrak{g},$ respectively.   The set $\{f_i, H_i, e_i\}$ generates a copy of of $sl_2(\C) \subset \mathfrak{g}$, and for any $H \in \mathfrak{t}$, $[H, f_i] = -\alpha_i(H)f_i,$ $[H, e_i] = \alpha_i(H)e_i$.  All of this information is captured in the Cartan matrix $A = [a_{ij}]$, with $a_{ij} = \alpha_j(H_i).$

The above information defines a distinguished so-called triangular decomposition $\mathfrak{g} = \mathfrak{u}_- \oplus \mathfrak{t} \oplus \mathfrak{u}_+$.  The spaces $\mathfrak{u}_-, \mathfrak{u}_+$ are the Lie algebras of maximal unipotent subgroups $U_-, U_+ \subset G$, which together with $T$ define a dense, open subset $U_- \times T \times U_+ \to U_-TU_+ \subset G,$.  When it is important
to single out one of these groups we use the notation $U_{\pm}$, otherwise we will refer to a general maximal unipotent subgroup $U$. 

We let $\Lambda \subset \mathfrak{t}^*$ be the character group of $T$. The weights $\lambda \in \Lambda$ which satisfy $\lambda(H_i) \geq 0$ are called dominant, and the non-negative real span of the dominant weights is called a Weyl chamber $\Delta \subset \mathfrak{t}^*$.   For $G^{ss}$ the simply-connected semisimple group associated to $\mathfrak{g}^{ss}$, we can present $G$ as $[G^{ss} \times L]/K$ for $L$ a torus and $K$ a finite central subgroup of $G \times L.$  In particular, any dominant weight $\lambda \in \Delta$ is a sum $\bar{\lambda} + \tau$ of a dominant weight $\bar{\lambda} \in \Delta^{ss}$ and a character $\tau$ of $L$ such that $\bar{\lambda} + \tau$ is a trivial character on $K$.  We choose one such presentation for $G$, and a corresponding isomorphism of the Lie algebra of $L$ with $\mathfrak{z}$. In this way, the Weyl chamber can be identified with a product $\mathfrak{z}^* \times \prod \Delta_i$, where $\Delta_i$ is a Weyl chamber of a simple part $\mathfrak{g}_i$ of $\mathfrak{g}^{ss}.$  The vectors $\omega_i \in \Delta^{ss}$ which satisfy $\lambda_j(H_i) = \delta_{ij}$ are called the fundamental weights, these are weights of $G$ when $G$ is simply-connected.   There is a natural partial ordering $<$ on the set of dominant weights, where $\lambda > \eta$ when $\lambda - \eta$ can be expressed as a non-negative sum of positive roots. 

The Weyl group $N(T)/T = W$ of $G$ is generated by elements $s_1, \ldots, s_r$ called simple reflections, these are in bijection with the simple roots.  The simple reflections define a length function on the elements of $W.$  The unique longest element $w_0$ has length equal to the number $N$ of positive roots of $\mathfrak{g}$.   We let $R(w_0)$ denote the set of reduced decompositions $\bold{i} = \{i_1, \ldots, i_N\}$, of $w_0 = s_{i_1}\ldots s_{i_N}$ into simple reflections.

The category $Rep(G)$ of finite dimensional Representations of $G$ is semisimple, and the set of irreducible representations is in bijection with the dominant weights $\lambda \in \Delta.$  We let $V(\lambda)$ denote the irreducible representation associated to $\lambda \in \Delta$.   The dual representation $V(\lambda)^*$ of an irreducible has weight $\lambda^* = -w_0(\lambda).$  Recall that $V(\lambda)$ decomposes as a $T-$representation into weight spaces $V_{\eta}(\lambda) \subset V(\lambda)$, that $\eta < \lambda$ for all $\eta$ which appear in this decomposition, and that the weight space  $V_{\lambda}(\lambda)$ is always $1-$dimensional.  We let $b_{\lambda}$ be a highest weight vector spanning $V_{\lambda}(\lambda)$.

\section{Background on valuations and Newton-Okounkov bodies}\label{background}

We introduce basic concepts and operations on valuations, for more on this topic, see Section $1$ of \cite{KK} and Chapter $6$ of the book by Zariski and Samuel \cite{ZS}.   Let $A$ be a commutative domain over $\C$ of finite Krull dimension $dim(A),$ and let $\mathbb{G}$ be an Abelian group with a total ordering $\prec$. A valuation $\mathfrak{v}: A \setminus \{0\} \to \mathbb{G}$ is a function which satisfies $\mathfrak{v}(fg) = \mathfrak{v}(f) + \mathfrak{v}(g)$ and $\mathfrak{v}(f + g) \preceq max\{\mathfrak{v}(f), \mathfrak{v}(g)\}$ for all $f, g \in A.$ Additionally, we assume that $\mathfrak{v}(C)$ is equal to the neutral element $0 \in \mathbb{G}$ for all $C \in \C \setminus \{0\}$, this means that $\mathfrak{v}$ is a lift of the trivial valutaion on $\C$ to $A.$ 

For our purposes, the group $\mathbb{G}$ is always taken to be a free Abelian group $\Z^M$ in some guise, and $\prec$ is the lexicographic ordering. Recall that this means that $(a_1, \ldots, a_M) \prec (b_1, \ldots, b_M)$ if and only if the first nonzero $b_i - a_i$ is positive. 
The properties of a valuation imply that the image $\mathfrak{v}(A) \subset \Z^M$ of $\mathfrak{v}$ contains $0$ and is closed under addition, making it an affine semigroup. The rank of $\mathfrak{v}$ is the dimension of vector space spanned by $\mathfrak{v}(A)$ in $\Q^m,$ or equivalently the rank of the free Abelian subgroup generated by $\mathfrak{v}(A) \subset \Z^M.$  The Krull dimension of the affine semigroup algebra $\C[\mathfrak{v}(A)]$ is then equal to the rank of $\mathfrak{v}$, see Chapter $6$ of \cite{BH}.

\begin{definition}$($Newton-Okounkov Body: affine case$)$
With $A, \mathfrak{v}, \Z^M, \prec$ as above  and $rank(\mathfrak{v}) = dim(A)$, the Newton-Okounkov body $C_{\mathfrak{v}} \subset \R^m$ is the positive real cone $\R_{\geq 0}\mathfrak{v}(A)$ spanned by the image $\mathfrak{v}(A) \subset \Z^M \subset \R^M.$   
\end{definition}

\noindent
It follows that $dim(C_{\mathfrak{v}})$ as a subspace of $\R^M$ is equal to the dimension of the real space $\R \mathfrak{v}(A)$, which in turn equals $dim(A) = rank(\mathfrak{v})$.

 Let $\cup_{w \in \Z^M} F_{\preceq w} = A$ be an increasing algebra filtration of $A$ by $\C$ vector spaces.  We say the spaces $F_{\preceq w}$ define a strong filtration if $\C \subset F_{\preceq 0}$, $\C \not\subset F_{\preceq w}$ for any $w \prec 0$, and the associated graded algebra $gr_F(A) = \bigoplus_{w \in \Z^M} F_{\preceq w}/F_{\prec w}$ is a domain. 

\begin{proposition}\label{equivalenceprop}
Let $A$, $\Z^M, \prec$ and $F$ be as above. The information of a strong increasing filtration $A = \cup_{w \in \Z^M} F_{\leq w}$ is equivalent
to a valuation $\mathfrak{v}_F: A \to \Z^M$.
\end{proposition}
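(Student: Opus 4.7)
The plan is to construct the two directions of the equivalence explicitly and check they are mutually inverse. This amounts mostly to translating between two pieces of equivalent bookkeeping, so the work lies in setting up the correct definitions rather than in any deep computation, which is why the authors describe it as almost tautological.

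For the forward direction, given a strong increasing filtration $A = \bigcup_w F_{\leq w}$, I would define for each nonzero $a \in A$ the value $v(a) := \min\{w \in \Z^M : a \in F_{\leq w}\}$. The set on the right is non-empty by exhaustiveness of the filtration and upward closed by monotonicity, so the minimum is well-defined provided the filtration is discrete enough, which is standard in the Newton-Okounkov setting. The subadditivity $v(a+b) \geq \min(v(a), v(b))$ is immediate from $F_{\leq w}$ being closed under addition, and the essential point $v(ab) = v(a) + v(b)$ reduces to verifying $ab \notin F_{< v(a)+v(b)}$. This is where the ``strong'' hypothesis enters: the classes $[a] \in \gr_{v(a)}(A)$ and $[b] \in \gr_{v(b)}(A)$ are nonzero by the very definition of $v$, and since $\gr(A) = \bigoplus_w F_{\leq w}/F_{<w}$ is a domain by assumption, their product $[ab] = [a][b] \in \gr_{v(a)+v(b)}(A)$ is nonzero, so $ab$ has leading term in precisely degree $v(a)+v(b)$.

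For the reverse direction, given a valuation $v \colon A \to \Z^M$, I would set $F_{\leq w} := \{a \in A : v(a) \leq w\}$. The filtration axioms follow from the valuation axioms, and to see that the associated graded is a domain it suffices to take nonzero classes $[a] \in \gr_w(A)$ and $[b] \in \gr_{w'}(A)$, lift them to elements with $v(a) = w$ and $v(b) = w'$, and invoke $v(ab) = w + w'$ to conclude $[ab] \neq 0$ in $\gr_{w+w'}(A)$. Finally, a direct check shows the two constructions are mutually inverse: starting from a filtration, passing to $v$, and then building $\{a : v(a) \leq w\}$ recovers $F_{\leq w}$, and the symmetric check in the other direction unravels to the definitions.

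I expect the only mild obstacle to be the well-definedness of the minimum in the first construction when the total order on $\Z^M$ is not a well-order. This is handled either by imposing a modest separation/discreteness hypothesis on the filtration, or by restricting to valuations with discrete value semigroup, which is the default convention in the Newton-Okounkov literature and is consistent with the rank-$M$ assumption already in force.
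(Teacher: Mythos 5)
Your proposal is correct and matches the paper's approach exactly: both directions use the same formulas $v(a)=\min\{w : a\in F_{\leq w}\}$ and $F_{\leq w}=\{a : v(a)\leq w\}$, and both identify the key point that $v(ab)=v(a)+v(b)$ is equivalent to the associated graded being a domain. You have simply fleshed out the verifications that the paper leaves to the reader, and your remark on the well-definedness of the minimum is a reasonable clarification of a point the paper passes over silently.
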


\begin{proof}
Starting with a filtration $F$, define $\mathfrak{v}_F$ by $\mathfrak{v}_F(f) = min\{w | f \in F_{\leq w}\}.$ For a valuation $\mathfrak{v}$ define $F^{\mathfrak{v}}_w \subset A$ by $F^{\mathfrak{v}}_w = \{f | v(f) \leq w\}.$ The property $\mathfrak{v}(fg) = \mathfrak{v}(f) + \mathfrak{v}(g)$ implies that $F^{\mathfrak{v}}$ is a strong filtration.  Similarly, $F$ being a strong algebra filtration implies that $\mathfrak{v}_F(fg) = \mathfrak{v}_F(f) + \mathfrak{v}_F(g).$ We leave
it to the reader to check the rest. 
\end{proof}

The following proposition allows us to construct valuations on $A$ in steps. 

\begin{proposition}\label{compositeprop}
Let $A$ be as above, with $F$ a strong filtration on $A$ by $(\Z^M, <_1),$ and let $G$
be a strong filtration on $gr_F(A)$ by $(\Z^L, <_2)$ which is compatible with the induced grading.   There is a strong filtration $F\circ G$ on $A$ by $(\Z^{M+L}, <_1\circ <_2),$ where $<_1\circ <_2$ is the composite order built lexicographically by first ordering by $<_1$ and breaking ties with $<_2.$ This filtration has associated graded algebra $gr_G(gr_F(A)).$
\end{proposition}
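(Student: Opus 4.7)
The plan is to invoke Proposition \ref{equivalenceprop} and define the composite directly in terms of valuations. Let $v_F: A \to \Z^M$ and $v_G: gr_F(A) \to \Z^L$ be the valuations associated to $F$ and $G$. For $a \in A \setminus \{0\}$, I would set $v_{F \circ G}(a) = (v_F(a), v_G(\overline{a}))$, where $\overline{a} \in gr_F(A)_{v_F(a)}$ denotes the nonzero image of $a$ in the $v_F(a)$-th graded component of $gr_F(A)$. The goal is then to verify that $v_{F \circ G}$ is a valuation taking values in $(\Z^{M+L}, <_1 \circ <_2)$, and that the filtration it induces via Proposition \ref{equivalenceprop} has associated graded algebra $gr_G(gr_F(A))$.

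First I would verify the two valuation axioms. Multiplicativity is straightforward: $v_F(ab) = v_F(a) + v_F(b)$ because $F$ is strong, so the leading term satisfies $\overline{ab} = \overline{a} \cdot \overline{b}$ in $gr_F(A)_{v_F(a)+v_F(b)}$, and then $v_G(\overline{a} \cdot \overline{b}) = v_G(\overline{a}) + v_G(\overline{b})$ because $G$ is strong. The subtler point is subadditivity of $v_{F \circ G}(a+b)$ under the lexicographic order. If $v_F(a) \neq v_F(b)$ the first coordinate settles it. If $v_F(a) = v_F(b) = w$ and $\overline{a} + \overline{b} \neq 0$ in $gr_F(A)_w$, then $v_F(a+b) = w$ and the conclusion reduces to subadditivity of $v_G$. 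If instead $\overline{a} + \overline{b} = 0$, then $v_F(a+b) > w$, so the first coordinate of the lex order strictly exceeds $w$ and subadditivity holds trivially.

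Next I would identify the associated graded piece in bidegree $(w,u)$. Writing $(F \circ G)_{\leq (w,u)}$ for the filtration, the natural candidate is the map
\[
(F \circ G)_{\leq (w,u)} \longrightarrow \bigl(G_{\leq u} \cap gr_F(A)_w \bigr)\big/\bigl(G_{<u} \cap gr_F(A)_w\bigr)
\]
sending $a$ to the class of its image in $gr_F(A)_w$, which is zero when $v_F(a) < w$ and equal to $\overline{a}$ when $v_F(a) = w$. Surjectivity uses the surjection $F_{\leq w} \twoheadrightarrow gr_F(A)_w$ combined with lifting representatives in $G_{\leq u} \cap gr_F(A)_w$, and the kernel is identified with $(F \circ G)_{< (w,u)}$ by the same case split as above. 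The hypothesis that $G$ is compatible with the induced $\Z^M$-grading is precisely what is needed to assemble $\bigoplus_{(w,u)} (G_{\leq u} \cap gr_F(A)_w)/(G_{<u} \cap gr_F(A)_w)$ into $gr_G(gr_F(A))$ as an algebra rather than a mere graded vector space.

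Finally, strength is immediate: $gr_F(A)$ is a domain because $F$ is strong, and then $gr_G(gr_F(A))$ is a domain because $G$ is strong on $gr_F(A)$. So $v_{F \circ G}$ is indeed a valuation and $F \circ G$ is a strong filtration by Proposition \ref{equivalenceprop}. I expect the main obstacle to be the bookkeeping in identifying the associated graded, specifically in handling the dichotomy $v_F(a) < w$ versus $v_F(a) = w$ uniformly, and in tracking how the grading compatibility of $G$ lets us decompose $G_{\leq u}$ into graded pieces before taking subquotients.
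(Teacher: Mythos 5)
Your proof is correct and is essentially the paper's argument translated to the valuation side via Proposition~\ref{equivalenceprop}: your $v_{F\circ G}(a) = (v_F(a), v_G(\overline{a}))$ is exactly the valuation attached to the pulled-back filtration $F\circ G_{w,u} \subset F_{\leq w}$ that the paper constructs. You additionally spell out the case analysis for subadditivity, the leading-term computation for multiplicativity, and the identification of the $(w,u)$-graded pieces, all of which the paper leaves as ``straightforward to check.''
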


\begin{proof}
Each space $F_{\leq w}/F_{< w} \subset gr_F(A)$ has a filtration $\ldots \subset G_{w, u} \subset \ldots.$  We pull the spaces $G_{w, u}$ back to a filtration $F\circ G_{w, u} \subset F_w$.
By construction, each space $F \circ G_{w, u}$ contains $F_{< w}$, this implies that $F \circ G_{w', u'} \subset F\circ G_{w, u}$ if $w' < w.$  If $w' = w$, then $u' < u$ and $F\circ G_{w', u'} \subset F\circ G_{w, u}$ by construction.  It is straightforward to check the identity $gr_{F\circ G} = gr_G(gr_F(A))$, which proves the strong filtration property. 
\end{proof} 

\begin{example}
For a polynomial ring $\C[x_1, \ldots, x_M],$ we can define maximal rank valuation $\mathfrak{d}: \C[x_1, \ldots, x_M] \to \Z^M$
by sending a polynomial $p(\vec{x}) = \sum_{\beta \in \Z^M} C_{\beta}\vec{x}^{\beta}$ to the multidegree $\alpha$ of its highest term 
under the lexicographic ordering.   The value semigroup $\mathfrak{d}(\C[x_1, \ldots, x_n])$ of this valuation is then the set of non-negative multidegrees $\Z_{\geq 0}^M \subset \Z^M,$ and the Newton-Okounkov body $C_{\mathfrak{d}}$ is the positive orthant $\R_{\geq 0}^M \subset \R^M.$
\end{example}

In the presence of a rational action on $A$ by a reductive group $G$, we say a strong filtration $F$ is $G-$stable if each space $F_{\preceq w}$ is likewise a $G-$representation. It is straightforward to verify that this is the case if and only if the associated valuation $\mathfrak{v}_F$ is invariant with respect to the $G-$action.  In this case, there is a strong filtration of the algebra of invariants $A^G$ by the invariant spaces $F_{\preceq w}^G = F_{\preceq w} \cap A^G$, and there is a natural induced rational $G-$action on the associated graded algebra $gr_F(A)$.

\begin{proposition}\label{Gassociatedgraded}
Let $F$ be a $G-$stable filtration on $A$, and let $F$ also denote the induced filtration on the algebra of invariants $A^G \subset A.$ We have the following isomorphism:

\begin{equation}
gr_F(A^G) \cong gr_F(A)^G.\\
\end{equation}

\end{proposition}
  
\begin{proof}
This is a consequence of the reductivity of $G$, as it ensures that the following sequence remains exact after taking $G-$invariants:

\begin{equation}
0 \to F_{\prec w} \to F_{\preceq w} \to F_{\preceq w}/F_{\prec w} \to 0.\\
\end{equation}

\end{proof}

Let $A = \bigoplus_{L \geq 0} A_L$ be a graded domain of finite Krull dimension with a
maximal rank valuation $\mathfrak{v}: A \to \Z^M$.  Suppose further that one component of the function$\mathfrak{v}$ returns the grading, i.e. if $f \in A_L$ then $\mathfrak{v}(f) = (\ldots, L, \ldots)$.  In this case the Newton-Okounkov body $C_{\mathfrak{v}}$ carries a natural projection $\pi:C_{\mathfrak{v}} \to \R_{\geq 0}$ and is a cone over the fiber $\pi^{-1}(1).$  

\begin{definition}
With $A,$ $\mathfrak{v}$ as above, we let $\bar{C}_{\mathfrak{v}} = \pi^{-1}(1)$, this is a compact, convex set.  When $A$ is the projective coordinate ring of a projective variety $X$ with respect to an ample line bundle $\mathcal{L}$, we denote this set $\bar{C}_{\mathfrak{v}}(\mathcal{L})$, and refer to it as the Newton-Okounkov body of $\mathcal{L}.$ 
\end{definition}

\section{Branching filtrations}\label{step1}

We make use of the ordering on dominant weights to construct filtrations of the coordinate rings of character varieties and configuration spaces.  These filtrations are not fine enough to give affine semigroup associated graded algebras, however this construction reduces the problem to constructing a $T^3-$stable filtration on $\C[P_3(G)]$. Spin diagrams for the group $G$ emerge from this construction as labels for the graded components of the associated graded algebras we construct.  We also give an alternative $GIT$ construction of the character variety $\mathcal{X}(F_g, G)$ which makes the connection with spin diagrams more transparent.

\subsection{Horospherical contraction and the algebra $\C[G]$}

We briefly review the theory of horospherical contraction, due to Popov \cite{Po}. We choose a maximal torus $T \subset G$, with triangular decomposition $U_- T U_+ \subset G,$ and Weyl chamber $\Delta.$  The coordinate ring $\C[G]$ has the following isotypical decomposition as a $G\times G$ representation (for proof see e.g. Theorem $12.9$ of \cite{Gr}):

\begin{equation}
\C[G] = \bigoplus_{\lambda \in \Delta} V(\lambda) \otimes V(\lambda^*).\\
\end{equation}

Horospherical contraction relates $G$ to the affine right $GIT$ quotient $G/U$ of $G$ by any maximal unipotent $U \subset G$.  This scheme has coordinate ring $\C[G/U] = \bigoplus_{\lambda \in \Delta} V(\lambda) \otimes \C b_{\lambda^*} \subset \C[G]$ with multiplication computed by dualizing the map $C: V(\lambda + \eta) \to V(\lambda) \otimes V(\eta)$ which sends the highest weight vector $b_{\lambda + \eta}$ to $b_{\lambda} \otimes b_{\eta}$. 

The highest weight ordering induces a $G\times G$-stable filtration on $\C[G]$ by 
the subspaces $F_{\leq \eta} = \bigoplus_{\gamma \leq \eta} V(\gamma) \otimes V(\gamma^*)$.  The next proposition gives the associated graded algebra of this filtration. 

\begin{proposition}\label{hcontract}$($Horospherical contraction$)$
The dominant weight filtration on $\C[G]$ induced by $\Delta$ has associated graded
algebra isomorphic to $\C[T\backslash (G/U_+ \times U_- \backslash G)],$ where the $T-$action is on the right of $G/U_+$ and the left of $U_- \backslash G$.  In particular, this $T-$action has  isotypical spaces $V(\lambda) \otimes V(\lambda^*) \subset \C[G/U_+ \times U_- \backslash G]$.   
\end{proposition}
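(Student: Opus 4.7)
The plan is to exhibit an explicit isomorphism between $\gr_F(\C[G])$ and $\C[T\backslash(G/U_+ \times U_-\backslash G)]$ that matches Peter--Weyl isotypical pieces on both sides and verifies that the two multiplication structures agree.

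First I would confirm that the dominant weight filtration is compatible with the product on $\C[G]$. By Peter--Weyl it suffices to check that the product of two isotypical pieces $V(\mu)\otimes V(\mu^*)$ and $V(\eta)\otimes V(\eta^*)$ lies in $\bigoplus_{\sigma \leq \mu+\eta} V(\sigma)\otimes V(\sigma^*)$. This reduces to the classical Clebsch--Gordan statement that $V(\mu)\otimes V(\eta) = V(\mu+\eta) \oplus \bigoplus_{\sigma < \mu+\eta} V(\sigma)^{\oplus m^{\sigma}_{\mu,\eta}}$ has its Cartan component on top and every other irreducible summand indexed by a strictly smaller dominant weight.

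Next I would unpack the target algebra. As $G$-modules, $\C[G/U_+] = \bigoplus_\lambda V(\lambda)$ and $\C[U_-\backslash G] = \bigoplus_\lambda V(\lambda^*)$, and the multiplication in each is dual to the Cartan embedding $V(\lambda+\eta) \to V(\lambda)\otimes V(\eta)$ sending $b_{\lambda+\eta}$ to $b_\lambda \otimes b_\eta$. Thus $\C[G/U_+ \times U_-\backslash G] = \bigoplus_{\lambda,\eta} V(\lambda)\otimes V(\eta^*)$, and the diagonal $T$-action splits this bigrading by the weight $\lambda-\eta$. The $T$-invariant subring is $\bigoplus_\lambda V(\lambda)\otimes V(\lambda^*)$, giving a canonical vector space identification with $\C[G]$ and hence with $\gr_F(\C[G])$.

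To finish I would compare multiplications on this common vector space. By definition, the product of $V(\mu)\otimes V(\mu^*)$ with $V(\eta)\otimes V(\eta^*)$ in $\gr_F(\C[G])$ is the projection of the $\C[G]$-product onto $V(\mu+\eta)\otimes V((\mu+\eta)^*)$, since everything strictly below is killed. The Cartan-top analysis identifies this leading term with the image of the dual Cartan projections on each tensor factor, which is precisely how multiplication is defined on the $T$-invariant subring above. The strong filtration property then comes for free, since the target is the ring of invariants on an irreducible variety and so is a domain. The main obstacle is the last step: one must verify that the leading Peter--Weyl component of a product of matrix coefficients actually agrees with the Cartan-embedding multiplication, not merely that it lies in the correct isotypical summand. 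This boils down to a coproduct calculation on $\C[G]$ identifying the highest weight vector of $V(\mu+\eta)\otimes V((\mu+\eta)^*)$ inside the product; once this is in hand, the algebra isomorphism follows by formal matching.
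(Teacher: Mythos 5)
Your argument takes a genuinely different route from the paper, which simply cites Grosshans (\cite{G}, Ch.~3, \S15) for this fact. Your plan to give a direct proof via Peter--Weyl and the Cartan component is the standard one and is essentially sound: the filtration compatibility follows exactly as you say from the fact that $V(\mu)\otimes V(\eta)$ has Cartan component $V(\mu+\eta)$ with multiplicity one and all other constituents strictly smaller, and the identification of the underlying graded vector space with $\bigoplus_\lambda V(\lambda)\otimes V(\lambda^*)$ is correct.

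The ``main obstacle'' you flag at the end is the genuine content, and you should be aware that it can be closed cleanly without a coproduct computation. The product map
$\bigl(V(\mu)\otimes V(\mu^*)\bigr) \otimes \bigl(V(\eta)\otimes V(\eta^*)\bigr) \longrightarrow \C[G]$, followed by projection to $V(\mu+\eta)\otimes V((\mu+\eta)^*)$, is $G\times G$-equivariant; and $V(\mu+\eta)\otimes V((\mu+\eta)^*)$ occurs with multiplicity one in the source as a $G\times G$-module. By Schur's lemma this map is therefore a scalar multiple of the tensor product of the two Cartan projections, which is precisely the multiplication on the $T$-invariant subring of $\C[G/U_+\times U_-\backslash G]$. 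To see the scalar is nonzero (hence can be normalized to $1$), evaluate on the product of extreme matrix coefficients $c_{b_\mu,\, b_{\mu^*}}\cdot c_{b_\eta,\, b_{\eta^*}} = c_{b_\mu\otimes b_\eta,\, b_{\mu^*}\otimes b_{\eta^*}}$, which is visibly a highest-weight matrix coefficient of the $V(\mu+\eta)$ block and is nonzero because $\C[G]$ is a domain. With that multiplicity-one argument in place your sketch is complete; as written it stops just short of the finish line.
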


\begin{proof}
This follows from Chapter $3,$ Section $15$ of \cite{Gr}.  
\end{proof}

As $\C[T \backslash (G/U_+ \times U_- \backslash G)]$ is a domain, any prolongation of the partial ordering on $\Delta$ to a total ordering which is compatible with addition of weights defines a strong filtration $\cup_{\lambda \in \Delta} F_{\leq \lambda} = \C[G]$, where $F_{\leq \lambda} = \bigoplus_{\eta \leq \lambda} V(\eta)\otimes V(\eta^*).$  There are many such prolongations, we define one below. 

\begin{definition}
Let $G$ be a simple complex group, with Weyl chamber $\Delta,$ and simple coweights $H_1, \ldots, H_r$. 
The total order $\bold{<}$ is defined by lexicographically organizing the orderings defined by $\lambda(H_i) \in \Z$.
\end{definition}

Recall that we have chosen a product decomposition $\Delta = \mathfrak{z}^* \times \prod \Delta_i$.  We can define $\bold{<}$ on $\Delta$ by ordering the $\mathfrak{g}_i$ and using the induced lexicographic organization of the orderings $\bold{<}_i.$  We then break ties with any lexicographic ordering on a basis of $\mathfrak{z}^*.$ The following is straightforward. 

\begin{lemma}\label{proh}
The total order $\bold{<}$ respects addition of weights, and refines the partial dominant weight ordering $<$.
\end{lemma}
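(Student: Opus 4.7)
The plan is to verify the two assertions separately, using throughout two standard facts about lexicographic orders on $\Z^r$: they are translation-invariant total orders (group orders), and they refine the coordinatewise partial order.

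First I address addition compatibility. Since evaluation at any fixed Cartan element is linear in the weight, the coordinate map $\lambda \mapsto (\lambda(H_{\alpha_1}), \ldots, \lambda(H_{\alpha_r}))$ is a group homomorphism from the weight lattice into $\Z^r$. Composing with the translation-invariance of lex immediately gives additive compatibility of $\bold{<}$ in the simple case, and the same reasoning carries over to the semisimple-plus-center case since the global order is again a lex built componentwise on $\mathfrak{z}^* \times \prod \Delta_i$ and weight addition is coordinatewise across this factorization.

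For refinement, I would handle the simple case first. The goal is to show that whenever $\eta - \lambda$ lies in the non-negative cone of positive roots (and is nonzero), the integer tuple $((\eta-\lambda)(H_{\alpha_i}))_i$ is lex-positive. With the $H_{\alpha_i}$ chosen dual to the simple roots so that $\alpha_j(H_{\alpha_i}) = \delta_{ij}$, every positive root $\alpha = \sum_k n_k \alpha_k$ has coordinates $(n_1, \ldots, n_r)$ that are non-negative and not all zero, and this positivity persists under arbitrary non-negative combinations. Combined with addition compatibility, componentwise non-negativity together with non-vanishing upgrades to lex-positivity, which yields $\lambda \bold{<} \eta$. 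The semisimple-plus-center case then reduces at once: positive roots lie entirely in the semisimple summand, so the dominant partial-order comparison decomposes factorwise, and applying the simple case in each $\Delta_i$ together with lex-composition closes the argument.

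The only thing to double-check carefully is the convention for ``simple coweights'': the whole argument hinges on choosing $H_{\alpha_i}$ as the basis dual to the simple roots, which is what makes dominant weights and positive roots share a common basis of non-negative coordinates. Once that convention is fixed, no real obstacle arises; the lemma is a clean combination of the linearity of evaluation, the ordered-group structure of lex on $\Z^r$, and the factorization $\Delta \cong \mathfrak{z}^* \times \prod \Delta_i$.
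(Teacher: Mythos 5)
The paper asserts this lemma without proof (``The following is straightforward''), so there is no argument of the author's to compare against. Your proof is the natural one, and your instinct to scrutinize what ``simple coweights'' $H_{\alpha_i}$ means was exactly right; it is the load-bearing point. Under your reading---$H_{\alpha_i}$ the fundamental coweights, i.e.\ the basis dual to the simple roots so that $\alpha_j(H_{\alpha_i}) = \delta_{ij}$---the argument is correct: $\bigl((\lambda-\eta)(H_{\alpha_i})\bigr)_i$ is precisely the vector of simple-root coefficients of $\lambda-\eta$, which is non-negative and nonzero exactly when $\lambda > \eta$ dominantly, and lexicographic order on $\R^r$ is a translation-invariant total order refining the coordinatewise partial order. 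The reduction to the product decomposition $\mathfrak{z}^* \times \prod \Delta_i$ is also fine, since positive roots live entirely in the semisimple factors.

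Be aware, though, that your reading of $H_{\alpha_i}$ conflicts with the paper's usage elsewhere: in the formula $V(\lambda) = \{f : e_i^{\lambda(H_{\alpha_i})+1} \circ_\ell f = 0\}$ and in the BZ inequality $t_k \leq \lambda(H_{\alpha_{i_k}}) - \sum_{\ell>k} a_{i_\ell, i_k} t_\ell$, the $H_{\alpha_i}$ are unmistakably the simple coroots $\alpha_i^\vee$, and this is also consistent with the paper's claim that $\lambda(H_{\alpha_i}) \in \Z$ (which would fail for fundamental coweights on the full weight lattice). Under the simple-coroot reading, plain lex on $(\lambda(\alpha_i^\vee))_i$ does \emph{not} refine the dominant order: for $\mathfrak{sl}_3$ take $\lambda = 2\omega_2$ and $\eta = \omega_1$, so $\lambda - \eta = \alpha_2$ and hence $\lambda > \eta$, yet $\lambda(\alpha_1^\vee) = 0 < 1 = \eta(\alpha_1^\vee)$; reordering the coordinates just relocates the counterexample. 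The obstruction is that $\alpha_j(\alpha_i^\vee) = a_{ji}$ can be negative, i.e.\ simple coroots are not dominant coweights, so pairing against them does not send the positive root cone into a positive orthant. In short, your argument is best read as a repair of the paper's definition rather than a proof under its operative conventions, and the lemma as literally stated (with simple coroots and plain lex) is false. An alternative fix that keeps integrality is to sort first by pairing with a strictly dominant integral coweight such as $2\rho^\vee$, which is strictly positive on nonzero elements of the positive root cone, and then break ties by any total order compatible with addition.
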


\begin{example}
We consider the ordering $\bold{<}$ on the Weyl chamber $\Delta$ of $GL_3(\C).$  A dominant weight $\lambda \in \Delta$ is typically
represented as a tuple $(\lambda_1, \lambda_2, \lambda_3)$ satisfying $\lambda_1 \geq \lambda_2 \geq \lambda_3,$ and the fundamental coroots are the vectors $H_1 = (1, -1, 0)$ and $H_2 = (0, 1, -1)$.  A weight is seperated into its semisimple part (a dominant weight of $SL_3(\C)$) and its central part by writing $(\lambda_1, \lambda_2, \lambda_3) = (\lambda_1 - \lambda_3, \lambda_2 - \lambda_3, 0) + \lambda_3(1, 1, 1).$  Consider the weights $(5, 3, 2), (4, 2, 1)$.  We have $H_1(5, 3, 2) = 2 = H_1(4, 2, 1)$ and $H_2(5, 3, 2) = 1 = H_2(4, 2, 1)$, however $(5, 3, 2) = (3, 1, 0 ) + 2(1, 1, 1)$ and $(4, 2, 1) = (3, 1, 0) + (1, 1, 1)$.  This implies that $(5, 3, 2) \bold{>} (4, 2, 1)$ in the ordering.  
\end{example}

\begin{remark}\label{flagreduce}

For an irreducible representation $V(\lambda)$, the orbit $G[b_{\lambda}] \subset \mathbb{P}(V(\lambda))$ of the highest weight line is a flag variety $G/P$.  The embedding $G/P \to \mathbb{P}(V(\lambda))$ endows $G/P$ with a line bundle $\mathcal{L}_{\lambda}$ with global section space $H^0(G/P, \mathcal{L}_{\lambda}) = V(\lambda^*)$.  This linearization can be constructed using an affine $GIT$ quotient of $G/U$ by $T$ with character $\lambda$.

\begin{equation}
Proj(\bigoplus_{m \geq 0} H^0(G/P, \mathcal{L}_{\lambda}^{\otimes m}) = [G/U]/_{\lambda} T\\
\end{equation}

\end{remark}

\subsection{Branching algebras}

We apply horospherical contraction to obtain filtrations on the coordinate rings of a class of spaces $B(\phi)$ called branching varieties. There is one such variety for each map $\phi: H \to G$ of complex, connected reductive groups. The space $P_n(G)$ is recovered as the branching variety $B(\delta_n)$, where $\delta_n: G \to G^{n-1}$ is the diagonal embedding.  We choose maximal unipotent subgroups $U_H \subset H$, $U_G \subset G,$ and Weyl chambers $\Delta_H, \Delta_G,$ and define $B(\phi)$ as the following affine $GIT$ quotient. 

\begin{equation} 
B(\phi) = H \backslash [H/U_H \times G/U_G]\\
\end{equation}

\noindent
Here $H$ acts on $H/U_H \times G/U_G$ diagonally on the left, where the action on $G/U_G$ is defined through $\phi.$ The coordinate ring of $B(\phi)$ is graded by the multiplicity spaces $W(\mu, \lambda)$ of $H$ irreducible representations in the irreducible representations of $G,$ as branched over the map $\phi.$

\begin{equation}
V(\lambda) = \bigoplus_{\mu \in \Delta_H} W(\mu, \lambda) \otimes V(\mu)
\end{equation}

\begin{equation}
\C[B(\phi)] = \bigoplus_{\mu, \lambda \in \Delta_H \times \Delta_G} W(\mu, \lambda)\\
\end{equation}

\begin{example}\label{configurationspaces}
Let $\delta_n: G \to G^{n-1}$ be the diagonal map $g \to (g, \ldots, g)$.  The branching variety $B(\delta_n)$ is then a left diagonal $G$ quotient of $G/U^n$, we also knows this space as $P_n(G).$  The coordinate ring $\C[P_n(G)]$ is the following direct sum of invariant spaces. 

\begin{equation}
\C[P_n(G)] = \bigoplus_{\vec{\lambda} \in \Delta^n} (V(\lambda_1) \otimes \ldots \otimes V(\lambda_n))^G\\
\end{equation}

The space $P_n(G)$ carries a residual right hand side $T^n$ action.  By Remark \ref{flagreduce}, the affine $GIT$ quotient $P_n(G)/_{\vec{\lambda}} T^n$ defined
by a tuple of dominant weights $(\lambda_1, \ldots, \lambda_n) = \vec{\lambda}$
can be identified with the configuration space $P_{\vec{\lambda}}(G)$.  To see this, note
that the invariant spaces $(V(m\lambda_1) \otimes \ldots \otimes V(m\lambda_n))^G$ corresponding to the multiples $m\vec{\lambda}$ of the character defined by $\vec{\lambda}$ are precisely the $G$ invariants in the global section spaces of the powers of the line bundle $\mathcal{L}_{\lambda_1} \boxtimes \ldots \boxtimes \mathcal{L}_{\lambda_n}$ on $G/P_1\times \ldots \times G/P_n.$
\end{example}

The branching algebras $\C[B(\phi)]$ come with special filtrations defined by diagrams
in the category of connected reductive groups.  We let $\phi = \pi \circ \psi$ be a factorization of $\phi$ in this category.

$$
\begin{CD}
H @>\psi>> K @>\pi>> G\\
\end{CD}
$$

The map $\psi$ defines an action of $H$ on the reductive group $K$, and $\pi$ defines an action of $K$ on $G$.  Using these actions, we can identify $B(\phi)$ with the following $GIT$ quotient. 

\begin{equation}
B(\phi) = H \times K \backslash [H/U_H \times K \times G/U_G]\\
\end{equation}

Here $H$ acts diagonally on the left of $H/U_H$ and $K$, and $K$ acts on the right of $K$ and on the left side of $G/U_G.$  The space $K \backslash K \times G/U_G$ is isomorphic to $G/U_G$, and likewise the resulting action of $H$ on $H/U_G\times G/U_G$ is induced through $\phi = \pi \circ \psi.$ The direct sum decomposition $\C[K] = \bigoplus_{\eta \in \Delta_K} V(\eta)\otimes V(\eta^*)$ induces a decomposition of $\C[B(\phi)]$.

\begin{equation}
\C[B(\phi)] = \bigoplus_{\lambda, \eta, \mu \in \Delta_G, \Delta_K, \Delta_H} W(\mu, \eta) \otimes W(\eta, \lambda)\\
\end{equation}

\noindent
This defines a $T_H \times T_G$-stable filtration $F^{\psi, \pi}$ of $\C[B(\phi)]$ by the dominant weights $\eta \in \Delta_K.$ 

\begin{equation}
F^{\psi, \pi}_{\leq \eta} = \bigoplus_{\lambda, \gamma \leq \eta, \mu} W(\lambda, \gamma) \otimes W(\gamma, \mu)\\
\end{equation}

\begin{proposition}\label{degrep}
 The associated graded algebra of $F^{\psi, \pi}$ is the affine $GIT$ quotient $\C[B(\pi) \times B(\psi)/ T_K]$, where $T_K$ acts
on $\C[B(\phi)]\otimes \C[B(\psi)]$ with isotypical spaces $W(\lambda, \gamma)\otimes W(\gamma, \mu).$
\end{proposition}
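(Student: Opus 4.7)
The plan is to recognize $F^{\psi, \pi}$ as the filtration induced on $\C[B(\phi)]$ by applying horospherical contraction to the middle $\C[K]$ factor in the presentation
\[
\C[B(\phi)] = \bigl[\C[H/U_H] \otimes \C[K] \otimes \C[G/U_G]\bigr]^{H \times K},
\]
where $H$ acts on $\C[K]$ via $\psi$ on (say) the left and $K$ acts on $\C[K]$ by right translation, coupled through $\pi$ to its action on $\C[G/U_G]$. The dominant weight filtration of Proposition \ref{hcontract} on $\C[K]$ is $K \times K$-stable, hence in particular $H \times K$-stable, and a direct comparison of graded components identifies its pullback to the invariants with $F^{\psi, \pi}$.

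Since $H \times K$ is reductive, the functor of $H \times K$-invariants is exact and therefore commutes with passage to associated graded. Combined with Proposition \ref{hcontract} this yields
\[
\gr_{F^{\psi, \pi}} \C[B(\phi)] = \bigl[\C[H/U_H] \otimes \C[T_K \backslash (K/U_K^+ \times U_K^- \backslash K)] \otimes \C[G/U_G]\bigr]^{H \times K}.
\]

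Next I unwind the iterated invariants. The $H$-action only touches the factors $\C[H/U_H] \otimes \C[K/U_K^+]$, while the right $K$-action only touches $\C[U_K^- \backslash K] \otimes \C[G/U_G]$; taking these invariants separately produces $\C[B(\psi)]$ and $\C[B(\pi)]$, leaving only the residual $T_K$-action from horospherical contraction. This gives
\[
\gr_{F^{\psi, \pi}} \C[B(\phi)] = \bigl[\C[B(\pi)] \otimes \C[B(\psi)]\bigr]^{T_K},
\]
and comparing $T_K$-weight space decompositions shows the invariants pair by matching $\gamma \in \Delta_K$, reproducing $\bigoplus_{\mu, \gamma, \lambda} W(\mu, \gamma) \otimes W(\gamma, \lambda)$.

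The main obstacle is bookkeeping with left/right $K$-actions: one must confirm that the horospherical pieces $K/U_K^+$ and $U_K^- \backslash K$ recover $B(\psi)$ and $B(\pi)$ with the correct isotypical labelling (in particular, that the $T_K$-weight on the $W(\mu, \gamma)$ summand of $\C[B(\psi)]$ matches the one on $W(\gamma, \lambda) \subset \C[B(\pi)]$ after the Weyl-element twist relating $U_K^-\backslash K$ to $K/U_K^+$). Once conventions are pinned down this is routine, and everything else follows formally from the exactness of invariants for reductive groups and from Proposition \ref{hcontract}.
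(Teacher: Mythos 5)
Your proposal is correct and follows the same route as the paper: both induce $F^{\psi,\pi}$ from the horospherical filtration on the middle $\C[K]$ factor, invoke $K\times K$-stability, and pass to invariants to obtain $\C[B(\pi)\times B(\psi)]^{T_K}$. You simply spell out what the paper compresses into one sentence — explicitly commuting the reductive-invariants functor with passage to associated graded and unwinding the tensor factorization — though you could also note, as the paper does, that the resulting filtration remains $T_H\times T_G$-stable by construction.
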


\begin{proof}
The filtration $F^{\psi, \pi}$ is induced from the horospherical filtration on $\C[K]$.  The $K\times K$ stability of horospherical contraction and the remarks above imply that the associated graded algebra is the domain $\C[B(\pi) \times B(\psi)]^{T_K}.$ 
\end{proof}

\noindent
Notice that the associated graded algebra $\C[B(\pi) \times B(\psi)]^{T_K}$ has a residual algebraic action of $T_K.$

We finish this subsection by applying this construction to the diagonal map $\delta_n: G \to G^{n-1}.$ Let $\tree$ be a tree with two internal vertices and $n$ leaves labelled $0, \ldots, n-1.$ Let $k +1$ be the number of edges incident on the vertex connected to the $0$  leaf, and $m$ be the number of leaves connected to the other internal vertex. This structure defines a factorization $\delta_n = Id ^s \times \delta_m \times Id ^t \circ \delta_k: G \to G^{n-1},$ where $s + t = k-1$ (see Figure \ref{factortree} for an example).  Proposition \ref{degrep} implies there is a filtration $F^{\tree}$ on $\C[P_n(G)]$, with associated graded algebra $\C[P_m(G) \times P_k(G)]^T$.  

\begin{example}
The factorization of $\delta_4$ below is represented by the tree in Figure \ref{factortree}.

$$
\begin{CD}
G @>\delta_3>> (G \times G) @>\delta_3 \times Id>> ((G\times G)\times G)\\
\end{CD}
$$

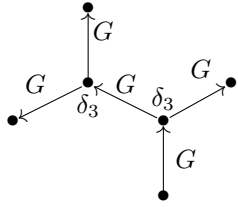
\begin{figure}[htbp]

$$
\begin{xy}
(0, 0)*{\bullet} = "A1";
(0, 10)*{\bullet} = "A2";
(0, 13)*{\delta_3};
(9, 15)*{\bullet} = "A3";
(-10, 15)*{\bullet} = "A4";
(-10, 12)*{\delta_3};
(-10, 25)*{\bullet} = "A5";
(-20, 10)*{\bullet} = "A6";
(3, 5)*{G};
(5, 17)*{G};
(-8, 22)*{G};
(-17, 15)*{G};
(-5, 15)*{G};
"A2"; "A1";**\dir{-}? >* \dir{>};
"A3"; "A2";**\dir{-}? >* \dir{>};
"A4"; "A2";**\dir{-}? >* \dir{>};
"A5"; "A4";**\dir{-}? >* \dir{>};
"A6"; "A4";**\dir{-}? >* \dir{>};
\end{xy}
$$\\
\caption{A directed tree of diagonal maps.}
\label{factortree}
\end{figure}
 
\end{example}

Given a trivalent tree $\tree$ with $n$ leaves, and an ordering on $E^o(\tree),$ we iterate this construction to obtain a filtration $F^{\tree}$ on $\C[P_n(G)]$.  

\begin{proposition}\label{treeweight}
For a trivalent tree $\tree$ with $n$ leaves, and an ordering on $E^o(\tree)$ there is a $T^n$-invariant filtration on $\C[P_n(G)]$ with associated graded algebra the coordinate ring of $P_{\tree}(G) = [\prod_{v \in V(\tree)} P_3(G)]/ T^{E(\tree)}.$ 
\end{proposition}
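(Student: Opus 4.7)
The plan is to induct on the number of internal edges $|E(\tree)|$, applying the one-edge factorization construction from Proposition \ref{degrep} at each step and using Proposition \ref{compositeprop} to compose the resulting filtrations. The base case is the star $K_{1,3}$, where $\tree$ has no internal edges, $n=3$, and $P_\tree(G) = P_3(G) = P_n(G)$; the trivial filtration suffices.

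For the inductive step, let $e$ be the smallest internal edge in the ordering on $E(\tree)$. Cutting $\tree$ at $e$ produces two trivalent subtrees $\tree_1, \tree_2$ with $n_1, n_2$ leaves (where $n_1 + n_2 = n+2$), each inheriting an ordering on its internal edges. The edge $e$ presents $\delta_n$ as a composition $\delta_n = \pi \circ \psi$, with $\psi = \delta_2 : G \to G \times G$ and $\pi = \delta_{n_1} \times \delta_{n_2}$ (up to a reindexing of the leaves matching the leaf labels). By the construction immediately preceding the proposition, Proposition \ref{degrep} produces a $T^n$-invariant filtration $F^e$ on $\C[P_n(G)]$ with associated graded $\C[T_e \backslash P_{n_1}(G) \times P_{n_2}(G)]$, where $T_e$ denotes the intermediate torus attached to the edge $e$.

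By induction, each $\C[P_{n_i}(G)]$ carries a $T^{n_i}$-invariant strong filtration $F^{\tree_i}$ with associated graded $\C[P_{\tree_i}(G)]$. Because $F^{\tree_i}$ is invariant under the full $T^{n_i}$ (and in particular under the $T_e$ acting through the cut leaf), the tensor-product filtration $F^{\tree_1} \otimes F^{\tree_2}$ restricts to a strong filtration on the $T_e$-invariant subalgebra $\gr_{F^e}(\C[P_n(G)])$ whose associated graded is $\C[T_e \backslash P_{\tree_1}(G) \times P_{\tree_2}(G)]$. Proposition \ref{compositeprop} then assembles a composite strong filtration $F^\tree$ on $\C[P_n(G)]$ with
\begin{equation*}
\gr_{F^\tree}(\C[P_n(G)]) = \C\bigl[T_e \backslash P_{\tree_1}(G) \times P_{\tree_2}(G)\bigr] = \C\Bigl[T^{E(\tree)} \backslash \textstyle\prod_{v \in V(\tree)} P_3(G)\Bigr] = \C[P_\tree(G)],
\end{equation*}
where the middle identification uses $V(\tree) = V(\tree_1) \sqcup V(\tree_2)$ and $E(\tree) = E(\tree_1) \sqcup E(\tree_2) \sqcup \{e\}$. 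The $T^n$-invariance of $F^\tree$ persists because the torus $T_e$ introduced at each step is an internal branching torus, disjoint from the residual $T^n$-action attached to the leaves.

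The main obstacle is verifying the compatibility hypothesis of Proposition \ref{compositeprop} at every stage, namely that the product filtration $F^{\tree_1} \otimes F^{\tree_2}$ refines the $T_e$-weight grading of $\gr_{F^e}$. This follows from the observation that each inductive filtration respects the isotypical decomposition at every leaf of its subtree, so in particular at the cut leaf whose dominant weight indexes the $T_e$-grading supplied by horospherical contraction; the remaining verification is a straightforward bookkeeping that the composite order on the combined weight data respects multiplication.
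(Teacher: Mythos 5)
Your argument matches the paper's proof in spirit: both iterate the single-edge horospherical construction of Proposition \ref{degrep} across the internal edges of $\tree$ and glue the resulting filtrations with Proposition \ref{compositeprop}. The paper organizes this linearly, collapsing edges one at a time in the given order and splitting one $P_k(G)$ factor at each stage; you organize it as a binary recursion, cutting at one edge and handling the two subtrees by induction. These are the same construction up to the choice of bracketing of the composite order, so I treat them as the same route.

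There is, however, an error in your description of the factorization of $\delta_n$ at the cut edge $e$. With the paper's convention $\delta_j\colon G \to G^{j-1}$ (so that $B(\delta_j) = P_j(G)$), your proposed $\psi = \delta_2$ is the identity $G \to G$, and with the other convention $\delta_j\colon G \to G^j$, the composite $(\delta_{n_1}\times\delta_{n_2})\circ\delta_2$ lands in $G^{n_1+n_2} = G^{n+2}$ rather than the target of $\delta_n$. In either case $\pi\circ\psi$ fails to equal $\delta_n$, so Proposition \ref{degrep} does not directly apply to the pair $(\psi,\pi)$ as you have written it. The factorization you actually want is the one the paper sets up just before the proposition: take $\psi = \delta_{n_1}\colon G \to G^{n_1-1}$ for the subtree $\tree_1$ containing the root leaf, and $\pi = \mathrm{Id}^{s}\times\delta_{n_2}\times\mathrm{Id}^{t}\colon G^{n_1-1}\to G^{n-1}$ with $s+t = n_1-2$, inserting $\delta_{n_2}$ in the slot corresponding to the cut leaf $\ell_1$. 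Then $B(\psi) = P_{n_1}(G)$, $B(\pi)$ is $P_{n_2}(G)$ times $n_1-2$ toric factors $B(\mathrm{Id}_G)$, and after quotienting by $T_K = T^{n_1-1}$ the toric factors are absorbed, leaving exactly $\C[T_e\backslash P_{n_1}(G)\times P_{n_2}(G)]$ as you claim. With this correction your inductive step and the verification of the compatibility hypothesis for Proposition \ref{compositeprop} go through as you describe.
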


\begin{proof}
An ordering $E(\tree) = \{e_1, \ldots, e_{n-3}\}$ induces a length $n-3$ chain
of collapsing maps on trees, $\pi_i: \tree_{i-1} \to \tree_i$ where $\tree_0 = \tree$, 
and $\tree_i$ is obtained from $\tree_{i-1}$ via $\pi_i$ by collapsing the edge $e_i.$
The tree $\tree_{n-3}$ has a single internal vertex, and $\tree_{n-4}$ has a single internal edge. By the previous construction there is a $T^n$-stable filtration on $\C[P_n(G)]$ with associated graded algebra $P_{\tree_{n-4}}(G) = [P_{v(u)}(G) \times P_{v(w)}(G)]/T$, where $v(u)$ and $v(w)$ are the valences of the two internal vertices $u, w \in V(\tree_{n-4})$.  The map $\pi_{n-5}$ collapses the edge $e_{n-2}$ to either $u$ or $w$, yielding a corresponding filtration on $\C[P_{v(u)}(G)]$ or $\C[P_{v(w)}(G)].$  This filtration is invariant with respect to $T$ above, and so induces a filtration on $\C[P_{v(u)}(G) \times P_{v(w)}(G)]/T$.  We can now apply Proposition \ref{compositeprop} to obtain a filtration on 
$\C[P_n(G)]$. Continuing this way, we obtain the proposition. 
\end{proof}

\subsection{Character varieties and the master configuration space}

We show that a similar family of filtrations exist for the character variety $\mathcal{X}(F_g, G)$. This variety is constructed as the following $GIT$ quotient. 

\begin{equation}
\mathcal{X}(F_g, G) = G^g/_{ad} G\\
\end{equation}

The $ad$ subscript indicates the conjugation action of $G$ on the product $g \circ_{ad} (x_1, \ldots, x_n) =$ $(gx_1g^{-1}, \ldots, gx_ng^{-1}).$
The coordinate ring $\C[\mathcal{X}(F_g, G)]$ is therefore the algebra of conjugation invariants in $\C[G^g].$ By Proposition \ref{hcontract},
the horospherical contraction of $\C[G]$ to $\C[T \backslash (G/U_+ \times U_- \backslash G)]$ is $G\times G$ invariant, therefore we may define a filtration on  $\C[\mathcal{X}(F_g, G)]$ by placing this $G \times G$ stable filtration on the coordinate ring of each part $G$ of the product $G^g.$

\begin{proposition}\label{characterdegconfig}
There is a filtration on $\C[\mathcal{X}(F_g, G)]$ with associated graded ring equal to the coordinate ring of $[ T \backslash (G/U_+ \times U_- \backslash G)]^g /_{ad} G = P_{2g}(G)/T^g.$  Here the invariants of the torus $T^g$ are the tensor products $(V(\lambda_1)\otimes \ldots  \otimes V(\lambda_{2g}))^G \subset \C[P_{2g}(G)]$, where $\lambda_{2k-1}^* = \lambda_{2k}.$
\end{proposition}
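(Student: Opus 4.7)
The plan is to lift the horospherical contraction of $\C[G]$ from Proposition \ref{hcontract} to the $g$-fold tensor product $\C[G^g] = \C[G]^{\otimes g}$, and then pass to the adjoint $G$-invariants. First I would observe that the dominant weight filtration from Proposition \ref{hcontract}, once extended to a total order compatible with addition (for instance via the order $\bold{<}$ of Lemma \ref{proh}), is $G\times G$-stable: each filtered piece is a sum of $G\times G$-isotypical components $V(\lambda)\otimes V(\lambda^*)$. Tensoring this filtration with itself $g$ times produces a strong filtration on $\C[G^g]$ indexed by $\Delta^g$ (ordered lexicographically, or by any refinement compatible with addition), whose associated graded algebra is the tensor product $\C[T\backslash (G/U_+\times U_-\backslash G)]^{\otimes g} = \C[(T\backslash (G/U_+\times U_-\backslash G))^g]$, and which is stable under the $G^g\times G^g$-action.

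Next, the adjoint action of $G$ on $\C[G^g]$ factors through the diagonal inclusion $G \hookrightarrow G^g\times G^g$, $h\mapsto (h,\ldots,h,h,\ldots,h)$, so it preserves each filtered piece. Since $G$ is reductive, taking $G$-invariants is exact and commutes with the formation of the associated graded. Thus the restricted filtration on $\C[\mathcal{X}(F_g,G)] = \C[G^g]^G$ has associated graded equal to
\begin{equation*}
\bigl(\C[(T\backslash (G/U_+\times U_-\backslash G))^g]\bigr)^G = \C\bigl[[T\backslash (G/U_+\times U_-\backslash G)]^g/_{ad}G\bigr].
\end{equation*}

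The remaining task is to match this with $P_{2g}(G)/T^g$. Using $U_- \backslash G \cong G/U_+$ (via $g\mapsto g^{-1}$ composed with a Weyl group twist), each factor $G/U_+\times U_-\backslash G$ is identified with $(G/U)^2$, so $(G/U_+\times U_-\backslash G)^g \cong (G/U)^{2g}$. The $T^g$-action pairs consecutive copies, and after quotienting by the diagonal $G$-action one obtains $G\backslash (G/U)^{2g}/T^g = P_{2g}(G)/T^g$. Reading off graded pieces, each $T$-invariant isotypical component from Proposition \ref{hcontract} is $V(\lambda)\otimes V(\lambda^*)$, so on the tensor product the condition $\lambda_{2k-1}^{*} = \lambda_{2k}$ appears exactly as the defining $T^g$-invariance, and the outer adjoint $G$-invariant is $(V(\lambda_1)\otimes V(\lambda_2)\otimes\cdots\otimes V(\lambda_{2g}))^G$, as claimed.

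The main obstacle is bookkeeping rather than conceptual difficulty: one must track which copy of $T$ (left vs.\ right) acts on each factor so that the diagonal adjoint $G$-action is really the restriction of $G^g\times G^g$, and one must verify that the identification $G/U_+\times U_-\backslash G\cong (G/U)^2$ pairs the two $T$-actions so that $T$-invariance forces $\lambda^* = \mu$ rather than $\lambda = \mu$. Once the conventions are pinned down, the proposition follows by applying Proposition \ref{hcontract} factor-by-factor and invoking exactness of $G$-invariants.
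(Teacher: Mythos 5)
Your proof follows exactly the same route the paper takes (in the two sentences preceding the proposition): apply Proposition \ref{hcontract} factor-by-factor to get a $G^g\times G^g$-stable filtration on $\C[G^g]$, observe that the adjoint $G$-action lives inside this larger group, and use reductivity to commute taking $G$-invariants with forming the associated graded. You spell out the bookkeeping for the $T^g$-invariance condition $\lambda_{2k-1}^*=\lambda_{2k}$ more carefully than the paper bothers to, but the conceptual content is identical.
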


Now that we have related the character variety $\mathcal{X}(F_g, G)$ to the master configuration space $P_{2g}(G),$ we may use the valuations constructed in Proposition \ref{treeweight}.  

\begin{proposition}\label{characterstep1}
For every choice of a trivalent graph $\Gamma$, spanning tree $\tree \subset \Gamma$, an ordering on $E(\Gamma)$ and an orientation on $E(\Gamma) \setminus E(\tree) = \{e_1, \ldots, e_g\}$, there is a filtration on $\C[\mathcal{X}(F_g, G)]$ with associated graded ring the coordinate ring of $P_{\Gamma}(G) = [\prod_{v \in V(\Gamma)} P_3(G)]/T^{E(\Gamma)}.$
\end{proposition}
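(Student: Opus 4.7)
The plan is to combine Propositions \ref{characterdegconfig} and \ref{treeweight} via the composite filtration construction of Proposition \ref{compositeprop}. By Proposition \ref{characterdegconfig} we already have a filtration on $\C[\mathcal{X}(F_g, G)]$ whose associated graded is $\C[P_{2g}(G)]^{T^g}$, where the $T^g$ pairs up the $2g$ flag factors according to the duality $\lambda_{2k-1}^* = \lambda_{2k}$. So the task reduces to producing, from $(\Gamma, \tree, \text{ordering}, \text{orientation})$, a further $T^g$-invariant filtration on $\C[P_{2g}(G)]$ whose associated graded, after taking $T^g$-invariants, is $\C[P_\Gamma(G)]$.

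First I would turn the combinatorial data into a trivalent tree with $2g$ leaves. Since $\Gamma$ is trivalent with $\beta_1(\Gamma) = g$ and no leaves, a count gives $|V(\Gamma)| = 2g-2$ and $|E(\Gamma)| = 3g-3$, and there are exactly $g$ non-tree edges $e_1, \dots, e_g$. Cutting each $e_k$ at its midpoint produces a trivalent tree $\tilde\tree$ with the same internal vertex set $V(\Gamma)$ and with $2g$ new leaves; the chosen orientation on $e_k$ prescribes which of the two resulting leaves is labelled $2k-1$ and which is labelled $2k$. The ordering on $E(\Gamma)$ restricts to an ordering on $E(\tilde\tree)$.

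Now I apply Proposition \ref{treeweight} to $\tilde\tree$. This yields a $T^{2g}$-invariant filtration on $\C[P_{2g}(G)]$ whose associated graded is $\C[P_{\tilde\tree}(G)] = \C\bigl[\prod_{v \in V(\tilde\tree)} P_3(G)\bigr]^{T^{E(\tilde\tree)}}$. Since $T^g \subset T^{2g}$ sits inside as the subtorus pairing the leaf labels $(2k-1, 2k)$, the filtration restricts to a filtration of the $T^g$-invariants, i.e.\ of the associated graded produced by Proposition \ref{characterdegconfig}. Passing to $T^g$-invariants of the graded ring identifies, for each non-tree edge $e_k$, the torus acting on leaf $2k-1$ with the torus acting on leaf $2k$; combining this with the existing $T^{E(\tilde\tree)}$ quotient produces the full torus $T^{E(\Gamma)}$ acting on $\prod_{v \in V(\Gamma)} P_3(G)$, so the graded ring is $\C[P_\Gamma(G)]$. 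Finally, applying Proposition \ref{compositeprop} to the filtration of Proposition \ref{characterdegconfig} and to this induced filtration on its associated graded yields the desired filtration on $\C[\mathcal{X}(F_g, G)]$.

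The one step that requires a little care, and which I expect to be the main point to verify, is the identification of $T^g$-invariants of $\C[P_{\tilde\tree}(G)]$ with $\C[P_\Gamma(G)]$. Concretely, one must check that the flag-pair identifications coming from gluing the cut leaves back into the edges $e_1, \dots, e_g$ agree with the combined torus quotient $T^{E(\tilde\tree)} \times T^g = T^{E(\Gamma)}$; this is a bookkeeping check using the orientation data, and it also shows that the resulting filtration is intrinsic to $(\Gamma, \tree, \text{ordering}, \text{orientation})$ and does not depend on the auxiliary choice of leaf labelling used in Proposition \ref{characterdegconfig}. Once this identification is in hand, the strong-filtration property and computation of the associated graded follow immediately from Proposition \ref{compositeprop}.
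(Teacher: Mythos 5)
Your proposal is correct and follows essentially the same route as the paper's own proof: apply Proposition \ref{characterdegconfig} to reduce to $\C[P_{2g}(G)]^{T^g}$, split the non-tree edges using the orientation to build the $2g$-leaf trivalent tree ($\tilde\tree$ in your notation, $\tree'$ in the paper's), invoke Proposition \ref{treeweight} and the $T^{2g}$-stability, pass to $T^g$-invariants, and compose the filtrations via Proposition \ref{compositeprop}. Your write-up is somewhat more explicit about the bookkeeping (the leaf-labelling convention and the identification $T^{E(\tilde\tree)} \times T^g = T^{E(\Gamma)}$), but there is no substantive difference in approach.
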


\begin{proof}
We identify the ordered, oriented edges $e_1, \ldots, e_g$ with the components of $G^g$, where the orientation distinguishes the left and right hand side $G$ actions on each component. The filtration above then yields an associated graded algebra $\C[P_{2g}(G)/T^g]$. We split each edge $e_i$ in two: $f_{2i-1}, f_{2i},$ and build the trivalent tree $\tree'$ using the topology of the spanning tree $\tree,$ see Figure \ref{edgesplit}.  By Proposition \ref{treeweight}, this defines a filtration on $\C[P_{2g}(G)]$ with associated graded algebra $\C[P_{\tree'}(G)] = [\prod_{v \in V(\Gamma)} P_3(G)]/T^{E(\tree')}.$ By the $T^{2g}$ stability of this filtration and Propositions \ref{compositeprop} and \ref{Gassociatedgraded}, we may induce a filtration on $\C[\mathcal{X}(F_g, G)]$ with associated graded algebra the quotient $P_{\Gamma}(G) = P_{\tree'}(G)/T^g.$
\end{proof}

\begin{figure}[htbp]
\centering
\includegraphics[scale = 0.26]{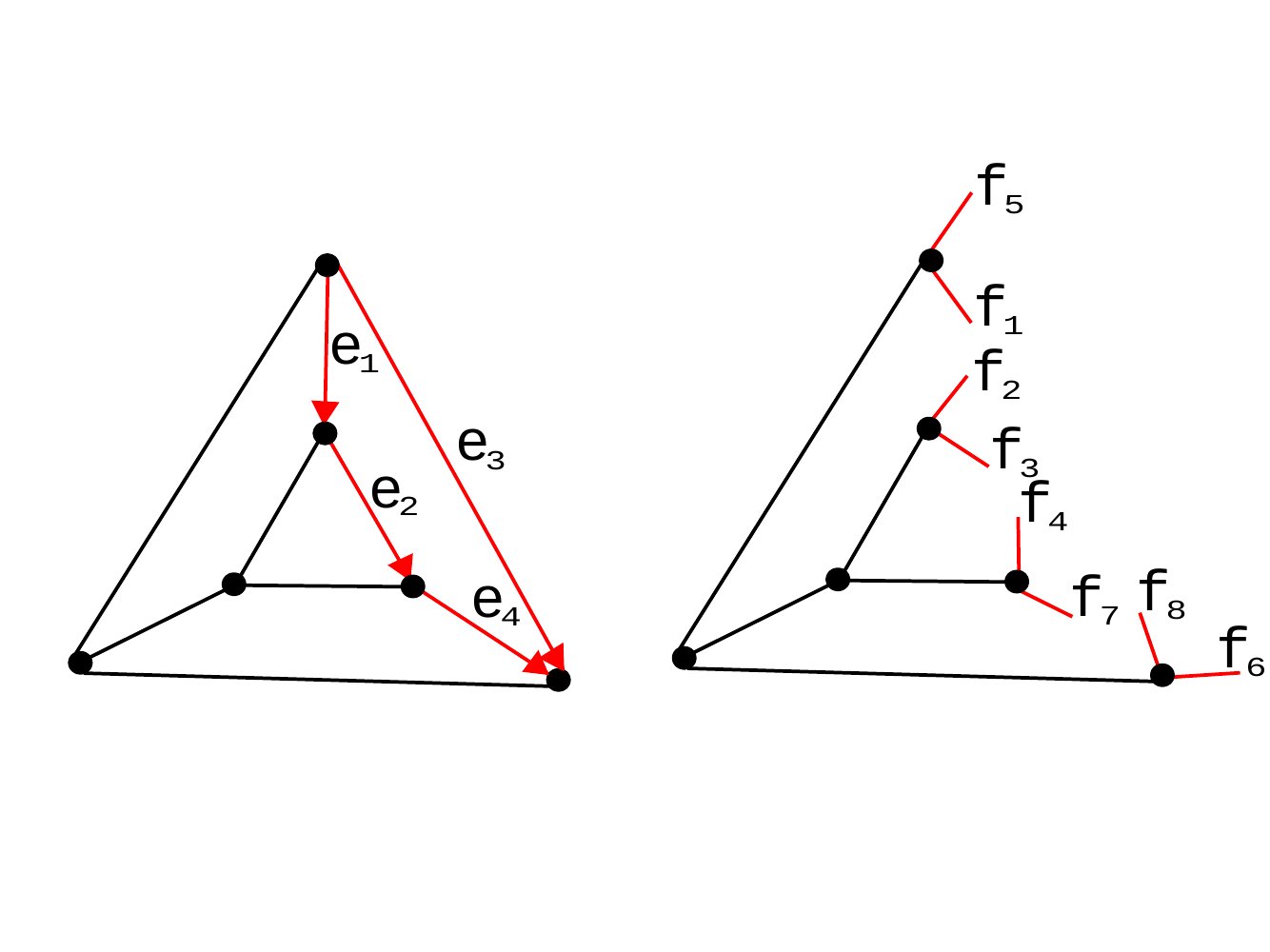}
\caption{Splitting the complement of a spanning tree.}
\label{edgesplit}
\end{figure}

\begin{example}\label{sl2example}
In the case $G = SL_2(\C),$ the spaces $W(r_1, r_2, r_3),$ $r_i \in \Z_{\geq 0} \subset \R_{\geq 0} = \Delta$ are multiplicity-free.  The dimension $W(r_1, r_2, r_3)$ is non-zero precisely when $r_1, r_2, r_3$ satisfy two conditions: $r_1 + r_2 +r_3 \in 2\Z$, and $|r_1 - r_2| \leq r_3 \leq r_1 + r_2.$  This second condition is equivalent to the $r_i$ being the sidelengths of a triangle.  The affine semigroup $BZ_3(SL_2(\C)) \subset \Z_{\geq 0}^3$ (see Section \ref{example} for an explanation of this notation) of the $(r_1, r_2, r_3)$ satisfying these conditions is generated by $(1, 1, 0), (1, 0, 1)$ and $(0, 1, 1)$.
In this case $T = \C^*$, and the action of $T^3 = (\C^*)^3$ defines the structure of an affine $3-$dimensional toric variety on $P_3(SL_2(\C))$, in particular this space is naturally identified with the affine space $\bigwedge^2(\C^3).$  By working through the $(\C^*)^{E(\Gamma)}$ quotient in Proposition \ref{characterstep1}, we identify $P_{\Gamma}(SL_2(\C))$ with an affine toric variety.  The associated affine semigroup $BZ_{\Gamma}(SL_2(\C))$ is the set of weightings $w: E(\Gamma) \to \Z_{\geq 0}$ such that for any $e, f, g \in E(\Gamma)$ sharing a common vertex $v \in V(\Gamma)$ we must have $w(e) + w(f) + w(g) \in 2\Z$ and $|w(e) - w(f) | \leq w(g) \leq w(e) + w(f).$
\end{example}

\subsection{Graph construction of character varieties}

We present an alternative construction of the variety $\mathcal{X}(F_g, G)$, which motivates the graph filtrations of Proposition \ref{characterstep1}. A variation on this construction has also been discovered by Florentino and Lawton in \cite{FL}.   We fix a trivalent graph $\Gamma$ with no leaves, and consider the forest $\hat{\Gamma}$ obtained by splitting each edge in $E(\Gamma).$

\begin{figure}[htbp]
\centering
\includegraphics[scale = 0.3]{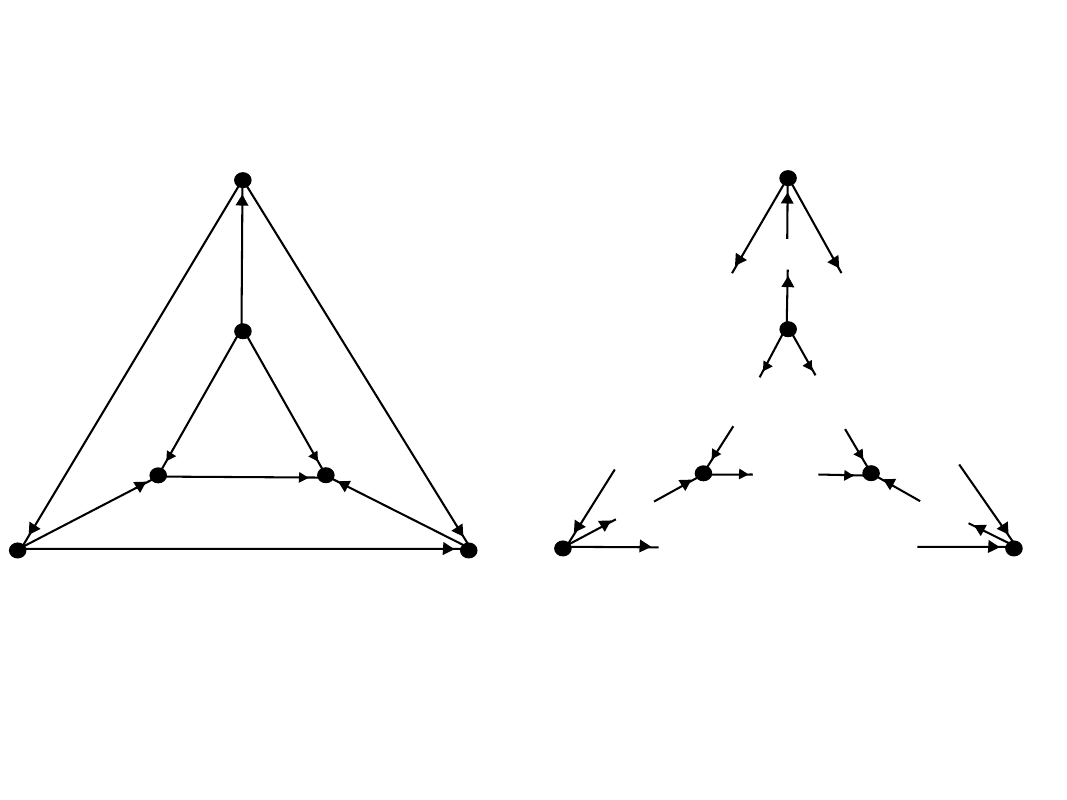}
\caption{The directed forest associated to a directed graph.}
\label{forestsplit}
\end{figure}

We associate a copy of the $GIT$ quotient $M_3(G) = G \backslash G^3$ to each connected component of $\hat{\Gamma},$  notice that this space has a residual right $G^3$ action.   The product $\prod_{v \in V(\hat{\Gamma})} M_3(G)$ then has an action by $E(\Gamma)$ copies of $G$, where the component corresponding to $e \in E(\Gamma)$ acts on the two components associated to the pair of edges in $\hat{\Gamma}$ which split $e$. We define $M_{\Gamma}(G)$ to be the $GIT$ quotient by this action. 

\begin{equation}
M_{\Gamma}(G) = [\prod_{v \in V(\Gamma)} M_3(G)]/G^{E(\Gamma)}\\
\end{equation}

\begin{proposition}\label{spantreeiso}
For each choice of a spanning tree $\tree \subset \Gamma$, an ordering on the edges $\vec{e} = E(\Gamma) \setminus E(\tree),$ and an orientation of each edge in $E(\Gamma),$ there is an isomorphism $\Phi_{\tree, \vec{e}}: M_{\Gamma}(G) \to \mathcal{X}(F_g, G).$
\end{proposition}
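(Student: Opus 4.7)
The plan is to represent an element of $\prod_{v \in V(\Gamma)} G^3$ as an assignment $h^v_e \in G$ of a group element to each half-edge $(v,e)$ of $\Gamma$, and to build $\Phi_{\tree, \vec{e}}$ as a ``holonomy along non-tree loops'' construction. The spanning tree $\tree$, edge ordering, and orientations in the statement are needed precisely to identify $\pi_1(\Gamma, v_0) \cong F_g$ with a specific free generating set.

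The first step is to pass to ``edge coordinates.'' For each oriented edge $e: v_- \to v_+$, the element $x_e := h^{v_+}_e (h^{v_-}_e)^{-1}$ is invariant under the right action of the $G$ indexed by $e$, and transforms under the left action of $G^{V(\Gamma)}$ by the standard gauge rule $x_e \mapsto u_{v_+} x_e u_{v_-}^{-1}$. This yields an identification $M_{\Gamma}(G) \cong G^{E(\Gamma)}/G^{V(\Gamma)}$ with $G^{V(\Gamma)}$ acting by the gauge rule. Next I would use the spanning tree to normalize: fix a base vertex $v_0$, orient $\tree$ away from $v_0$, and proceeding from parent to child along $\tree$, apply the gauge action to set $x_e$ equal to the identity for every $e \in E(\tree)$. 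This exhausts the gauge freedom except at $v_0$, and the tree normalizations force $u_v = u_{v_0}$ for every $v \in V(\Gamma)$; consequently the residual $G$ acts by simultaneous conjugation on the coordinates $(x_{e_1}, \ldots, x_{e_g}) \in G^g$ indexed by the ordered non-tree edges. This recovers $\mathcal{X}(F_g, G) = G^g /_{ad} G$. A dimension count ($\dim M_{\Gamma}(G) = (2V - E)\dim G = (g-1)\dim G$ for a trivalent $\Gamma$) confirms the map is not collapsing anything.

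To upgrade the resulting bijection to an isomorphism of varieties, I would exhibit an explicit inverse: given $(g_1, \ldots, g_g) \in G^g$, set $h^v_e = e$ for each half-edge along a tree edge, $h^{v_+}_{e_i} = g_i$, and $h^{v_-}_{e_i} = e$ along each non-tree edge. This regular section is $G$-equivariant and provides a two-sided inverse at the level of the affine GIT quotients. The main obstacle is bookkeeping: keeping the conventions for left/right actions, edge orientations, and the loop generators of $\pi_1(\Gamma, v_0)$ coherent, so that $\Phi_{\tree, \vec{e}}$ can be recognized intrinsically as the holonomy map $[h^v_e] \mapsto (\mathrm{hol}(\gamma_1), \ldots, \mathrm{hol}(\gamma_g)) \bmod G$, where $\gamma_i$ is the based loop associated to the ordered, oriented non-tree edge $e_i$ by traversing $\tree$ from $v_0$ to the tail of $e_i$, crossing $e_i$, and returning along $\tree$.
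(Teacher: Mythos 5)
Your proof is correct and the underlying idea — using the spanning tree to normalize the gauge, leaving the residual diagonal $G$ acting by conjugation on the non-tree edges — is the same one the paper exploits, but you organize the argument differently. The paper first splits the non-tree edges $\vec{e}$ to form a genuine tree $\tree'$ with $2g$ leaves, proves as a separate reusable statement (Lemma \ref{treecontract}) that $M_{\tree'}(G) \cong G \backslash G^{2g}$ by iteratively contracting internal edges of a tree via $(g_{e,u}, g_{e,w}) \mapsto g_{e,u} g_{e,w}^{-1}$, and only then re-glues the split pairs, quotienting by $G^{\vec{e}}$ and using $(G \times G)/G \cong G$ to transport the left $G$ action to the adjoint action on $G^g$. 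You instead pass immediately to the edge coordinates $x_e = h^{v_+}_e (h^{v_-}_e)^{-1}$, identify $M_\Gamma(G)$ with $G^{V(\Gamma)} \backslash G^{E(\Gamma)}$ under the gauge action, and then gauge-fix the tree edges; this is the ``holonomy of a graph connection'' viewpoint, and notably the paper itself records the identification $M_\Gamma(G) = G^{V(\Gamma)} \backslash G^{E(\Gamma)}$ as a remark only \emph{after} the proof. Your route is more topologically transparent and directly produces the map as holonomy along the non-tree loops, while the paper's is more modular (the tree-contraction lemma is re-used elsewhere in the paper, e.g.\ in Propositions \ref{treeweight} and \ref{characterstep1}). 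One small caution: the sentence about the dimension count "confirming the map is not collapsing anything" doesn't on its own establish injectivity — it at most gives generic finiteness — but this does no harm since you then supply the explicit $G$-equivariant section $(g_1,\dots,g_g) \mapsto (h^v_e)$, and that is the substantive part of the argument; you should just make sure to note that the diagonal conjugation action on $G^g$ lifts to acting by a constant $u \in G$ diagonally in $G^{V(\Gamma)}$ together with $u^{-1}$ diagonally in $G^{E(\Gamma)}$, which is what makes the section equivariant and hence descends to an isomorphism of GIT quotients.
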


\begin{proof}
We split the edges $e_i \in \vec{e} \subset E(\Gamma)$ into pairs $e_{2i-1}, e_{2i}$, ordered using the orientation on $e_i,$ this gives a trivalent tree $\tree'.$  We construct $M_{\tree'}(G) = \prod_{v \in V(\Gamma)} M_3(G) / G^{E^o(\tree')},$ and note that $M_{\tree'}(G)/G^{\vec{e}} = M_{\Gamma}(G).$  We will prove that $M_{\tree'}(G) \cong G \backslash G^{2g}$ in Lemma \ref{treecontract}. We use the isomorphism $G_{2i-1}\times G_{2i}/G \cong G,$ $(g_{2i-1}, g_{2i}) \to g_{2i-1}g_{2i}^{-1}.$  This intertwines the left $G$ action on $G^{2g}$ with the adjoint action on $G^g.$
\end{proof}

Note that the definition of $M_{\Gamma}(G)$ is sensible for any graph, regardless of the valence of the vertices. We consider $M_{\tree}(G)$ for possibly non-trivalent trees in the following lemma. 

\begin{lemma}\label{treecontract}
For any tree $\tree$ with $n$ leaves, each orientation on the edges of $\tree$ gives an isomorphism to the left quotient $M_{\tree}(G) \cong G \backslash G^n.$ 
\end{lemma}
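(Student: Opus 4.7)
The plan is to induct on the number of internal vertices of $\tree$. The base case is when $\tree$ has a single internal vertex $v$, in which case $\tree$ is a star with $n$ leaves, $M_{\tree}(G) = M_{d(v)}(G) = G \backslash G^n$ by definition, and any orientation gives the identity map.

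For the inductive step, choose an internal edge $e$ with endpoints $u, v$. Deleting $e$ decomposes $\tree$ into two subtrees $\tree_1 \ni u$ and $\tree_2 \ni v$, each inheriting a new leaf corresponding to the former half-edge of $e$; let $n_i$ be the number of leaves of $\tree_i$, so $n_1 + n_2 = n + 2$, and each $\tree_i$ has strictly fewer internal vertices. By construction
\[
M_{\tree}(G) \;=\; [M_{\tree_1}(G) \times M_{\tree_2}(G)]/G_e,
\]
where $G_e$ is the copy of $G$ associated to $e$ acting on the right on the two half-edge coordinates, with the left/right convention at each endpoint determined by the orientation on $e$. Applying the inductive hypothesis to the inherited orientations on $\tree_1$ and $\tree_2$ produces isomorphisms $M_{\tree_i}(G) \cong G \backslash G^{n_i}$.

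It therefore suffices to establish the fusion identity
\[
[ (G \backslash G^{n_1}) \times (G \backslash G^{n_2}) ]/G_e \;\cong\; G \backslash G^{n_1 + n_2 - 2},
\]
where $G_e$ acts on the right on the coordinates corresponding to $e$ in each factor. Identifying the left hand side with $(G \times G) \backslash G^{n_1 + n_2} / G_e$, I would define the map explicitly by contracting the two $e$-coordinates: on a representative $(x_1, \ldots, x_{n_1}, y_1, \ldots, y_{n_2})$, send it to $(x_1, \ldots, x_{n_1 - 1}, x_{n_1} y_1^{-1} y_2, \ldots, x_{n_1} y_1^{-1} y_{n_2})$ (the sign of the exponent on $y_1$ being fixed by the orientation of $e$). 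This expression is invariant under the $G_e$-action $(x_{n_1}, y_1) \mapsto (x_{n_1} h, y_1 h)$ since $x_{n_1} h (y_1 h)^{-1} = x_{n_1} y_1^{-1}$, and it is equivariant for the left $G \times G$-action in a way that descends to a single residual left $G$-action on $G^{n_1 + n_2 - 2}$. The inverse is the canonical lift $(z_1, \ldots, z_n) \mapsto [(z_1, \ldots, z_{n_1 - 1}, 1), (1, z_{n_1}, \ldots, z_n)]$; verifying that the two compositions are the identity is a direct computation using the fact that $G \backslash G$ is a point, so any representative can be normalized to have $x_{n_1} = 1$ on one side.

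The main obstacle is organizing the orientation conventions so that (i) the inductive hypothesis applies consistently to $\tree_1$ and $\tree_2$ with their restricted orientations, and (ii) the final isomorphism $M_{\tree}(G) \to G \backslash G^n$ does not depend on the order in which internal edges are contracted. Both are ultimately routine — the first because orientations restrict to subtrees automatically, and the second because any two orders of contracting edges of $\tree$ differ by moves that are intertwined by associativity of the fusion identity $[(G \backslash G^a) \times (G \backslash G^b)]/G \cong G \backslash G^{a + b - 2}$. Once these conventions are fixed, the inductive step combines with the fusion identity to yield the claimed isomorphism.
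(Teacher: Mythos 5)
Your proof is correct and follows essentially the same route as the paper: both arguments reduce to fusing along a single internal edge via the explicit map that sends the two edge-coordinates $(g_{e,u}, g_{e,w})$ to the contraction $g_{e,u}g_{e,w}^{-1}$ absorbed into the remaining coordinates of the second factor, and then iterate over internal edges by induction. The only difference is that you spell out the induction and the inverse map more carefully and raise the (unnecessary, since the lemma only asserts existence of an isomorphism) question of independence from the contraction order; otherwise the argument matches the paper's.
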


\begin{proof}
It suffices to treat the case where  $\tree$ has one internal edge $e,$ as this calculation can then be iterated to show the result by induction. Let $\partial(e) = \{u, w\}$, with the orientation pointing from $u$ to $v.$ We view $M_{\tree}(G)$ as $G^{V(u)} \times G^{V(w)}$ with a left action of $G \times G$  and a right action of $G$ on two components $G_{e, u} \subset G^{V(u)}, G_{e, w} \subset G^{V(w)}.$  We use the map $G^{V(u)} \times G^{V(w)} \to G^{V(u) + V(w) - 2}$ given by the following. 

\begin{equation}
(g_1, \ldots, g_{V(u)-1}, g_{e, u}) \times (g_{e, w}, g_{V(u) +2}, \ldots, g_{V(w)}) \to\\ 
\end{equation}

$$ (g_1, \ldots, g_{V(u)-1} ,g_{e, w}g_{e, u}^{-1},  g_{V(u) +2}, \ldots, g_{V(w)}) \to$$
$$ (g_1, \ldots, g_{V(u)-1},  g_{e, u}g_{e, w}^{-1}g_{V(u) +2}, \ldots g_{e, u}g_{e, w}^{-1}g_{V(w)})$$

This is a $G \times G \times G$, where the first component acts diagonally on the left of $ G^{V(u) + V(w) - 2}$, and the second and third components act trivially.  Quotienting everything by $G\times G\times G$ then yields the isomorphism.  
\end{proof}

We may also view $M_{\Gamma}(G)$ as the following quotient. 

\begin{equation}
\prod_{v \in V(\Gamma)} M_3(G) / G^{E(\Gamma)} = G^{V(\Gamma)} \backslash \prod_{v \in V(\Gamma)} G^3 / G^{E(\Gamma)}\\
\end{equation}

$$ = G^{V(\Gamma)} \backslash \prod_{e \in E(\Gamma)} [(G\times G)/G] = G^{V(\Gamma)} \backslash G^{E(\Gamma)}.$$

\noindent
We can now recover Proposition \ref{characterstep1} by replacing the rightmost term with the horospherical degeneration $G^{V(\Gamma)} \backslash \prod_{e \in E(\Gamma)} [G/U_- \times U_+ \backslash G]/T = P_{\Gamma}(G).$

\section{Valuations from the dual canonical basis}\label{step2}

In Propositions \ref{treeweight} and  \ref{characterstep1}, each graph $\Gamma$ (respectfully tree $\tree$) with compatible information defines the following direct sum decompositions on the coordinate rings $\C[\mathcal{X}(F_g, G)]$ and $\C[P_n(G)]$. Here $W(\lambda, \eta, \mu)$ denotes the invariant vectors in $V(\lambda) \otimes V(\eta) \otimes V(\mu)$. 

\begin{equation}
\C[\mathcal{X}(F_g, G)] = \bigoplus_{\lambda: E(\Gamma) \to \Delta} \bigotimes_{v \in V(\Gamma)} [W(\lambda(v, i), \lambda(v, j), \lambda(v, k))]\\
\end{equation}

\begin{equation}
\C[P_n(G)] = \bigoplus_{\lambda: E(\tree) \to \Delta} \bigotimes_{v \in V(\tree)} [W(\lambda(v, i), \lambda(v, j), \lambda(v, k))]\\
\end{equation}

\noindent
Here $(v, i)$ denotes a vertex $v$ with incident edge $i$. In this section we show how to obtain finer combinatorial pictures of $\C[\mathcal{X}(F_g, G)]$ and $\C[P_n(G)]$ by structuring the intertwiner spaces $W(\lambda, \eta, \mu)$ using the dual canonical basis.

\subsection{String parameters and polytopes for tensor product multiplicities}

We recall the construction of polyhedral cones $C_{\bold{i}}(3)$ which control tensor product multiplicites for a reductive group $G.$ We take $G$ to be semisimple, but we will later remove this restriction.  In \cite{Lu},  Lusztig constructs a basis $\mathbb{B}$ of the subalgebra $\mathcal{U}_q(\mathfrak{u}_+)$ of the quantized universal enveloping algebra $\mathcal{U}_q(\mathfrak{g}).$ Specialization at $q = 1$ yields the canonical basis for each irreducible representation $V(\lambda) \subset \mathcal{U}(u_+)$.  The dual pairing between $\mathcal{U}(\mathfrak{u}_+)$ and $\C[U_+]$ induces a dual basis  $B(\lambda^*) \subset V(\lambda^*) \subset \C[U_+]$.  A basis $B \subset \C[U_- \backslash G]$ can then be constructed by taking the union $B = \coprod_{\lambda} B(\lambda) \times \{\lambda\}$. 

 We fix a reduced decomposition $\bold{i} \in R(w_0)$.  The entries of $\bold{i}$ correspond to to raising operators $e_{i_1}, \ldots, e_{i_N} \in \mathfrak{u}_+,$ these act as nilpotent derivations on $\C[U_+]$ from the differentiation of the left and the right actions of $U_+$ on itself.  We use the notation $e \circ_{\ell} f$ and $e \circ_r f$ to distinguish between the left and right actions of such an operator, and repeated applications are denoted with a superscript $e \circ \ldots \circ e = e^n.$  For the following see \cite{C} and \cite{K}. 

\begin{definition}
  For $f \in \C[U_+]$, the string parameters $w_{\bold{i}}(f) = (t_1, \ldots, t_N) \in \Z^N$ are defined inductively, starting with $t_1 = min\{ t | e_{i_1}^{t+1} \circ_{\ell} f = 0\}$.  The $k-$th entry is then defined as $t_k = min\{ t | e_{i_k}^{t+1} \circ_{\ell} e_{i_{k-1}}^{t_{k-1}}\circ_{\ell} \ldots \circ_{\ell}e_{i_1}^{t_1} \circ_{\ell} f = 0\}.$  This defines the string parametrization $w_{\bold{i}}: \C[U_+]\setminus \{0\} \to \Z^N$ associated to $\bold{i} \in R(w_0).$
\end{definition} 

\begin{lemma}
The function $w_{\bold{i}}$ is a valuation on $\C[U_+]$ when the string parameters are lex ordered first to last. 
\end{lemma}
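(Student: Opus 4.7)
By Proposition \ref{equivalenceprop} it suffices to verify two axioms in the convention used there: (V1) $w_{\bold{i}}(f+g)\le\max(w_{\bold{i}}(f),w_{\bold{i}}(g))$ and (V2) $w_{\bold{i}}(fg)=w_{\bold{i}}(f)+w_{\bold{i}}(g)$, where the comparisons on $\Z^N$ are lexicographic, comparing the first coordinate, then the second, and so on. Two structural facts drive everything. First, each raising operator $e_i$ acts on $\C[U_+]$ as a left-invariant vector field on $U_+$, hence as a derivation; in particular the Leibniz rule $e_i^n(fg)=\sum_{k=0}^{n}\binom{n}{k}(e_i^k f)(e_i^{n-k}g)$ holds. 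Second, $U_+\cong\mathbb{A}^N$ as a variety, so $\C[U_+]$ is a polynomial ring and thus a domain.

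Axiom (V1) is essentially immediate from the recursive definition of $w_{\bold{i}}$. The first coordinate $t_1(h)$ is the largest exponent with $e_{i_1}^{t_1(h)}h\ne 0$, and $e_{i_1}^{t+1}$ cannot kill a sum without killing both summands at an exponent $\le t$, so $t_1(f+g)\le\max(t_1(f),t_1(g))$. A strict inequality settles the lex comparison immediately, while equality forces the procedure to continue on $(f+g)_1=e_{i_1}^{t_1(f+g)}(f+g)$, where the same argument applies to $e_{i_2}$. Induction on the length $N$ of $\bold{i}$ handles all coordinates.

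For (V2) write $t_k=w_{\bold{i}}(f)_k$, $s_k=w_{\bold{i}}(g)_k$, and define the intermediate states $f_k:=e_{i_k}^{t_k}\cdots e_{i_1}^{t_1}f$ and $g_k:=e_{i_k}^{s_k}\cdots e_{i_1}^{s_1}g$, both nonzero by construction. I would prove by induction on $k$ the \emph{strengthened} claim that the first $k$ coordinates of $w_{\bold{i}}(fg)$ agree with $(t_1+s_1,\ldots,t_k+s_k)$ and that the $k$-th intermediate state of $fg$ satisfies $(fg)_k=c_k\,f_k g_k$ for some nonzero scalar $c_k$. For the inductive step, apply the Leibniz formula to $e_{i_{k+1}}^{t_{k+1}+s_{k+1}+1}(f_kg_k)$: in every summand $\binom{t_{k+1}+s_{k+1}+1}{j}(e_{i_{k+1}}^j f_k)(e_{i_{k+1}}^{t_{k+1}+s_{k+1}+1-j}g_k)$, either $j>t_{k+1}$ (so the $f_k$-factor vanishes) or $t_{k+1}+s_{k+1}+1-j>s_{k+1}$ (so the $g_k$-factor vanishes). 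Reducing the exponent by one, exactly one cross term $\binom{t_{k+1}+s_{k+1}}{t_{k+1}}f_{k+1}g_{k+1}$ survives, and it is nonzero because $\C[U_+]$ is a domain. This carries both parts of the inductive claim to level $k+1$.

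The main obstacle I anticipate is purely bookkeeping: one must carry the stronger hypothesis that $(fg)_k$ is a nonzero scalar multiple of $f_k g_k$, not just that the first $k$ coordinates agree; without this, there is no way to guarantee that the Leibniz cancellation at level $k+1$ isolates the single surviving cross term. Once this is set up correctly, the rest is a mechanical iteration to $k=N$, giving $w_{\bold{i}}(fg)=(t_1+s_1,\ldots,t_N+s_N)=w_{\bold{i}}(f)+w_{\bold{i}}(g)$ and completing the verification.
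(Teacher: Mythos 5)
Your proof is correct and follows essentially the same route as the paper's: for sums, compare string parameters lexicographically and recurse; for products, apply the Leibniz rule, observe that at exponent $t_{k+1}+s_{k+1}$ exactly one cross term survives, and invoke that $\C[U_+]$ is a domain to conclude it is nonzero, iterating along $\bold{i}$. The one small improvement you make is to state explicitly the strengthened inductive hypothesis that $(fg)_k$ is a nonzero scalar multiple of $f_k g_k$; the paper carries this implicitly when it says "we may repeat this calculation with $e_{i_2}$ on this term."
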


\begin{proof}
It is straightforward to show $w_{\bold{i}}(C) = (0, \ldots, 0)$ for any $C \in \C \setminus \{0\}.$ 
For $f, g \in \C[U_+]$ let $e_{i_k}$ be the first raising operator for which the string parameters $w_{\bold{i}}(f), w_{\bold{i}}(g)$ differ, with $t_k(f) > t_k(g)$. Then by definition, $w_{\bold{i}}(f + g) = w_{\bold{i}}(f).$ If no such $k$ exists, then $w_{\bold{i}}(f + g) \leq w_{\bold{i}}(f) = w_{\bold{i}}(g)$.  Applying $e_{i_1}^M$ to $fg$ yields the following. 

\begin{equation}
e_{i_1}^M(fg) = \sum_{p + q = M} \binom{M}{p} e_{i_1}^p(f)e_{i_1}^q(g)
\end{equation}

\noindent
If $M > t_1(f) + t_1(g),$ all terms in this sum must vanish. If $M \leq t_1(f) + t_1(g),$ then the fact that $\C[U_+]$ is a domain implies that this sum is non-zero, as this is the case for $M = t_1(f) + t_1(g),$ where the sum is multiple of $e_{i_1}^{t_1(f)}(f)e_{i_1}^{t_1(g)}(g).$ We may repeat this calculation with $e_{i_2}$ on this term.  By induction this yields $w_{\bold{i}}(fg) = w_{\bold{i}}(f) + w_{\bold{i}}(g).$
\end{proof}

We obtain a valuation $v_{\bold{i}}$ on the coordinate ring $\C[ U_- \backslash G] \subset \C[T \times U_+]$, with image in $\Z^N \times \Delta$ by breaking ties with the $\bold{<}$ ordering on $\Delta.$  Berenstein and Zelevinsky, \cite{BZ1} and Alexeev and Brion, \cite{AB} give inequalities for the image of this valuation using devices derived from the fundamental representations $V(\omega_i)$ of the Lie algebra $\breve{\mathfrak{g}}$ of the Langlands dual group $\breve{G}$, called $\bold{i}-$trails.  An $\bold{i}-$trail from a weight $\gamma$ to a weight $\eta$ in the weight polytope of a representation $V$ is a sequence of weights $(\gamma, \gamma_1, \ldots, \gamma_{\ell-1}, \eta),$ such that consecutive differences of weights are integer multiples of simple roots from $\bold{i}$, $\gamma_i - \gamma_{i+1} = c_k \alpha_{i_k},$ and the application of the raising operators $e_{i_1}^{c_1} \circ \ldots \circ e_{i_{\ell}}^{c_{\ell}}: V_{\eta} \to V_{\gamma}$ is non-zero.  For any $\bold{i}-$trail $\pi,$ Berenstein and Zelevinsky define  $d_k(\pi) = \frac{1}{2}(\gamma_{k-1} + \gamma_k)(H_{i_k}).$   For the next proposition see \cite{AB}, \cite{K}, and \cite{BZ1}. 

\begin{proposition}\label{flagimage}
The valuation $v_{\bold{i}}$ defines a bijection between $B \subset \C[U_- \backslash G]$ and the integral points in a convex polyhedral cone $C_{\bold{i}} \subset \Z^N \times \Delta$ defined by the following inequalities: 

\begin{enumerate}
\item $\sum_k d_k(\pi) t_k \geq 0$ for any $\bold{i}-$trail $\omega_i \to w_0s_i\omega_i$ in $V(\omega_i),$ for all fundamental weights $\omega_j$ of $\breve{\mathfrak{g}}$.\\   
\item $t_k \leq \lambda(H_{\alpha_{i_k}}) -\sum_{\ell = k+1}^N a_{i_{\ell}, i_k} t_{\ell}$ for $k = 1, \ldots, N.$\\
\end{enumerate}

\noindent
In particular $v_{\bold{i}}(B) = v_{\bold{i}}(\C[U_- \backslash G])$, $v_{\bold{i}}$ is a maximal rank valuation with image equal to a normal
affine semigroup, and the Newton-Okounkov body of $v_{\bold{i}}$ is $C_{\bold{i}}.$  

\end{proposition}

\begin{remark}
Any set $e_1, \ldots, e_m$ of $k-$linear nilpotent operators on a $k-$algebra $A$ defines a valuation in this way.  It would be very useful to know general sufficient conditions for the value semigroup $v_{\vec{e}}(A) \subset \Z_{\geq 0}^m$ to be finitely generated.
\end{remark}

For $b \in B$, with $v_{\bold{i}}(b) = ( \vec{t}, \lambda) \in C_{\bold{i}},$ the tuple $\vec{t} \in \Z^N$ is called the string parameter of $b$ associated to $\bold{i}.$   As constructed, the basis $B$ is composed of $T\times T$-weight vectors, with weights $(\sum t_i\alpha_i - \lambda, \lambda).$  In particular, $v_{\bold{i}}$ is a $T\times T$-stable valuation.  The filtration $F^{\bold{i}}$ corresponding to this valuation is given by $T\times T$-stable subspaces $F^{\bold{i}}_{\leq (\vec{t}, \lambda)} \subset \C[U_- \backslash G]$, which has a basis of those $b \in B$ with $v_{\bold{i}}(b) \leq (\vec{t}, \lambda).$

Now we consider the spaces $V_{\beta, \gamma}(\lambda) \subset V(\lambda),$ defined as the collection of those vectors of weight $\gamma$ which are annhilated by the raising operators $e_i^{\beta(H_{\alpha_i}) + 1}$ (acting on the right).  The basis $B$ has the ``good basis'' property (see \cite{Mat}), this implies that $B_{\beta, \gamma}(\lambda) = B \cap V_{\beta, \gamma}(\lambda)$ is a basis for the space $V_{\beta, \gamma}(\lambda)$.  In the case $\beta = \eta$, and $\gamma = \mu^* - \eta$ this space is classically known to be isomorphic to the invariant space $W(\mu, \lambda, \eta)$ (see \cite{Zh}, we will also provide a proof in the next subsection).    Berenstein and Zelevinksy characterize the string parameters $\vec{t}$ corresponding to the $b \in B_{\eta, \mu^* - \eta}(\lambda)$ as follows.

\begin{theorem}[Berenstein, Zelevinksy, \cite{BZ1}]\label{tenspolytope}
The decomposition $\bold{i}$ gives a labelling of $B_{\eta, \mu^* - \eta}(\lambda)$ by the points in $\Z_{\geq 0}^N$ such that the following hold: 

\begin{enumerate}
\item $\sum_k d_k(\pi)t_k \geq 0$ for any $\bold{i}-$trail
from $\omega_j$ to $w_0 s_j\omega_j$ in $V(\omega_j),$ for all fundamental weights $\omega_j$ of $\breve{\mathfrak{g}}$.\\

\item $-\sum_k t_k \alpha_k + \lambda  + \eta = \mu^*$\\

\item $\sum_k d_k(\pi) t_k \geq \eta(H_{\alpha_j})$  for any $\bold{i}-$trail
from $s_j\omega_j$ to $w_0\omega_j$ in $V(\omega_j),$  for all fundamental weights $\omega_j$ of $\breve{\mathfrak{g}}$.\\

\item $t_k + \sum_{j > k} a_{i_k, i_j} t_j \geq \lambda(H_{\alpha_{i_k}})$\\
\end{enumerate}

These are the integral points in a polytope $C_{\bold{i}}(\mu, \lambda, \eta).$
\end{theorem}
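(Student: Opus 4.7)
The plan is to realize $v_{\bold{i}}(B_{\eta, \mu^*-\eta}(\lambda))$ as a slice of the ambient string cone $C_{\bold{i}}$ of Proposition \ref{flagimage}, obtained by imposing the three additional constraints that single $B_{\eta,\mu^*-\eta}(\lambda)$ out of $B$: the $V(\lambda)$-summand condition, a weight equation, and the annihilation conditions $e_j^{\eta(H_{\alpha_j})+1}\cdot b = 0$ for each simple root. Since $v_{\bold{i}}$ is injective on $B$ and has image exactly the lattice points of $C_{\bold{i}}$, the entire task is to identify the sub-polytope cut out by these extra constraints.

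First I would dispatch the easy part. Restricting $v_{\bold{i}}(B)$ to the slice with $\Delta$-coordinate equal to $\lambda$ isolates $B(\lambda)$, so every trail inequality of type (1) in Proposition \ref{flagimage} (from $\omega_j$ to $w_0 s_j \omega_j$) passes directly to condition (1) of the theorem. Tracking weights along the raising operators $e_{i_1}, \ldots, e_{i_N}$ shows that $b$ with string parameters $\vec{t}$ has weight $\lambda - \sum_k t_k \alpha_{i_k}$; the requirement that this weight equal $\mu^*-\eta$ is precisely the linear equation (2). At this point the polytope already parameterizes the weight subspace $V(\lambda)_{\mu^*-\eta}$, and what remains is to carve out the smaller space $V_{\eta,\mu^*-\eta}(\lambda) \subset V(\lambda)_{\mu^*-\eta}$ determined by the raising-operator bounds.

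Step three, the main obstacle, is to convert each annihilation condition $e_j^{\eta(H_{\alpha_j})+1}b = 0$ into a family of linear inequalities on $\vec{t}$. The difficulty is that the string parameters $\vec{t}$ record the behaviour of $b$ under successive raising at the \emph{high-weight} end of the crystal string, whereas the bound $e_j^{\eta(H_{\alpha_j})+1}b=0$ constrains behaviour at the \emph{low-weight} end. To bridge the two ends I would use the Berenstein–Zelevinsky framework: the same $b$ admits a second parameterization in Lusztig coordinates read along the reversed reduced word $\bold{i}^{op}$, and the $*$-involution on $\mathbb{B}$ (or equivalently, the longest-element symmetry) relates the two. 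The resulting piecewise-linear transition map between the $\bold{i}$- and $\bold{i}^{op}$-parameterizations is governed precisely by $\bold{i}$-trails in the fundamental $\breve{G}$-modules $V(\omega_j)$ running between extremal weights; transporting the annihilation bound through this transition and re-expressing it in the original coordinates produces, after the weight identity (2) is used to eliminate redundancy, the trail inequalities (3) (from $s_j\omega_j$ to $w_0\omega_j$) and the complementary linear inequalities (4) that encode compatibility of the lower-end bound with $b$ lying in $V(\lambda)$.

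To finish, I would appeal to the good basis property of $B$: $B_{\eta,\mu^*-\eta}(\lambda) = B \cap V_{\eta,\mu^*-\eta}(\lambda)$ is an honest basis of the invariant space, so the conditions (1)--(4) are both necessary and sufficient, and no further inequalities can appear. The harder direction (that every lattice point in the candidate polytope is actually attained) follows because each such point is already attained as some $v_{\bold{i}}(b)$ with $b \in B(\lambda)$ by Proposition \ref{flagimage}, and the extra inequalities of steps two and three are exactly the conditions forcing such a $b$ to lie in $V_{\eta,\mu^*-\eta}(\lambda)$. The only nontrivial geometric content is therefore the translation of a crystal-theoretic bound at one end of a string into polyhedral data visible at the other end, carried by $\bold{i}$-trails in the Langlands dual fundamentals.
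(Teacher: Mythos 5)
The paper does not prove Theorem \ref{tenspolytope}; it is stated as a citation to Berenstein--Zelevinsky \cite{BZ1}, so there is no in-paper proof to compare against. Evaluated on its own terms, your sketch correctly identifies the easy parts: conditions (1) and (2) are immediate from restricting the ambient string cone of Proposition \ref{flagimage} to the $\lambda$-slice and tracking the weight $\lambda - \sum_k t_k\alpha_{i_k}$, and your closing remark that surjectivity onto the lattice points of the candidate polytope follows formally from Proposition \ref{flagimage} once the cut-out conditions are shown equivalent to membership in $V_{\eta,\mu^*-\eta}(\lambda)$ is a clean observation.

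The genuine gap is in your step three, which is where essentially all the mathematical content of Berenstein--Zelevinsky's theorem lives. You assert that passing the annihilation bound $e_j^{\eta(H_{\alpha_j})+1}b=0$ through a transition to the reversed word $\bold{i}^{op}$ (or the $*$-involution) produces conditions (3) and (4), but this is stated, not derived. The precise content needed is a closed tropical formula expressing $\varepsilon_j(b) = \max\{n : e_j^n b\neq 0\}$ as a maximum, over $\bold{i}$-trails from $s_j\omega_j$ to $w_0\omega_j$ in the dual fundamental $V(\omega_j)$, of the linear functionals $\sum_k d_k(\pi)t_k$ (together with the $\lambda$-dependent bound underlying (4)). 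Deriving that formula is the piecewise-linear heart of \cite{BZ1} and cannot be waved at; without it, you have rederived (1) and (2) but not (3) or (4). Your ``high-weight end versus low-weight end'' framing is also slightly off: both the string parameters and the annihilation condition concern raising operators applied to $b$; the actual obstacle is that the string parameters apply the $e_{i_k}$ in the specific order prescribed by $\bold{i}$, while the annihilation bound is about a single $e_j$ applied directly, and relating the two is exactly what the trail combinatorics do. As an outline of the BZ strategy the proposal is sound, but as a proof it is missing the central computation.
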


The first and last conditions say that $(\vec{t}, \lambda)$ is a member
of $C(\bold{i})$ in the fiber over the weight $\lambda,$ the second condition
says that the basis members lie in the weight $\mu^* - \eta$ subspace of $V(\lambda),$
and the third condition says that the appropriate raising operators $e_i^{\eta(H_{\alpha_i})+1}$ annihilate. We realize these polytopes as slices of the following polyhedral cone.

\begin{definition}
For a string parameterization $\bold{i},$ the cone $C_{\bold{i}}(3)$ is defined by the following inequalities on $(\vec{t}, \lambda, \eta) \in C(\bold{i}) \times \Delta \subset  \Z_{\geq 0}^N \times \Delta \times \Delta:$

\begin{enumerate}
\item $\sum_k d_k(\pi)t_k \geq 0$ for any $\bold{i}-$trail
from $\omega_j$ to $w_0 s_j\omega_j$ in $V(\omega_j),$  for all fundamental weights $\omega_j$ of $\breve{\mathfrak{g}}$.\\

\item $-\sum_k t_k \alpha_k + \lambda  + \eta \in \Delta$\\

\item $\sum_k d_k(\pi) t_k \geq \eta(H_{\alpha_j})$  for any $\bold{i}-$trail
from $s_j\omega_j$ to $w_0\omega_j$ in $V(\omega_j),$   for all fundamental weights $\omega_j$ of $\breve{\mathfrak{g}}$.\\

\item $t_k + \sum_{j > k} a_{i_k, i_j} t_j \geq \lambda(H_{\alpha_{i_j}})$\\
\end{enumerate}

\end{definition}

We let $\pi_1( \vec{t}, \lambda,  \eta) = (-\sum_k t_k \alpha_k + \lambda  + \eta)^*,$ $\pi_2( \vec{t}, \lambda,  \eta) = \lambda$, and $\pi_3( \vec{t}, \lambda, \eta) = \eta.$  By construction $C_{\bold{i}}(\mu, \lambda, \eta)$ is the fiber of these maps over $(\mu, \lambda, \eta).$

\subsection{The tensor product ring map}

The $T\times T-$stable subspace $\bigoplus V_{\eta, \mu^*-\eta}(\lambda)t^{\eta}$ $\subset \C[U_- \backslash G \times T]$ inherits a basis $B_3$ from $B \times \Delta \subset \C[U_- \backslash G \times T]$. In this subsection we show that this space is an algebra, isomorphic to $\C[P_3(G)]$.   As a result we will obtain both a basis $B_3 \subset \C[P_3(G)]$ and a $T^3-$invariant valuation $v_{\bold{i},3}: \C[P_3(G)] \to C_{\bold{i}}(3)$ with $v_{\bold{i}, 3}(B_3)$ equal to the integer points in $C_{\bold{i}}(3).$

We use the isomorphism $P_3(G) \cong (U_- \backslash G \times U_- \backslash G \times G/U_+)/G \cong (U_- \backslash G \times U_- \backslash G)/U_+.$ 
The residual action of $T^3$ on $P_3(G)$ intertwines under this isomorphism with a torus $T^3 \cong T_1 \times T_2 \times T_3$ action on $(U_- \backslash G \times U_- \backslash G)/U_+$ defined as follows: the tori $T_2$ and $T_3$ act on the left hand sides of the components $U_- \backslash G \times U_- \backslash G$, and the torus $T_1$ acts diagonally on the right through the automorphism of $T$ corresponding to the dual map $-w_0: \mathfrak{t}^* \to \mathfrak{t}^*$ on characters.   We make use of the following commutative diagram of affine varieties.

$$
\begin{CD}\label{birat}
[T \times U_+ \times T \times U_+]/U_+ @<\pi<< T \times U_+ \times T \\
@VVV @VVV\\
[U_- \backslash G \times U_- \backslash G]/U_+ @<<< U_- \backslash G \times T\\
\end{CD}
$$

The top row is the map $\pi: (s, u, t) \to (s, u, t, Id)$, this is an isomorphism with inverse $(s, x, t, y) \to (xy^{-1}, s, t)$. This map intertwines the left $U_+ \times U_+$ action on $(U_+ \times U_+)/ U_+$ with the left and right actions of $U_+$ on itself. The bottom row is defined similarly, and the vertical arrows are given by the map $(s, u) \to su \in TU_+ \subset U_- \backslash G.$   

We identify $T \times U$ with the Borel a subgroup $B \subset G$, which is isomorphic as a group to $T \ltimes U$. The $T_1$ action is the defined as the right hand side action of the torus on $B$.  Similarly the $T_2$ and $T_3$ actions are defined as the left hand actions on the first and second components $T \times U_+,$ respectively. These actions make all of the maps in the above diagram $T_1 \times T_2 \times T_3$-equivariant.  Now we recall that the irreducible representation $V(\lambda)$ has the following description as a subspace of $\C[U_+]$ (\cite{Mat}, \cite{Zh}). 

\begin{equation}
V(\lambda) = \{f \in \C[U_+] | e_i^{\lambda(H_{\alpha_i}) +1} \circ_{\ell} f = 0\}\\ 
\end{equation}

\noindent
Here $1 \in \C[U_+]$ is identified with the highest weight vector $b_{\lambda} \in V(\lambda).$ We let $V_{\eta}(\lambda) \subset V(\lambda)$ denote the space of functions $f$ which satisfy $e_i^{\eta(H_{\alpha_i}) + 1} \circ_r f = 0.$

\begin{lemma}
The following diagram commutes, and the top row is an isomorphism of vector spaces. 

$$
\begin{CD}
(V(\lambda) \otimes V(\eta))^{U_-} @>>> V_{\eta}(\lambda)\\
@VVV @VVV\\
\C[(T \times U_+ \times T \times U_+)/U_+ ] @>\pi^*>> \C[T \times U_+ \times T]\\
\end{CD}
$$

\end{lemma}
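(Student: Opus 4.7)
The plan is to realize the top arrow concretely as ``evaluation at the identity of $U_+$ in the second factor'' and deduce both commutativity and the isomorphism from this description. First I would make the vertical maps explicit via the open big cell $T\times U_+ \hookrightarrow U_-\backslash G$, $(s,u)\mapsto su$: an irreducible $V(\lambda) \subset \C[U_-\backslash G]$ restricts to functions of the form $(s,u)\mapsto s^\lambda f(u)$ with $f\in\C[U_+]$ satisfying $e_i^{\lambda(H_{\alpha_i})+1}\circ_\ell f = 0$. Both vertical arrows are then instances of the same ``multiply by $T$-characters and restrict to the big cell'' construction: the left vertical sends $\sum f_i\otimes g_i \in (V(\lambda)\otimes V(\eta))^{U_-}$ to the function $(s,u,t,v)\mapsto s^\lambda t^\eta \sum_i f_i(u)g_i(v)$, while the right vertical sends $h\in V_\eta(\lambda)$ to $(s,u,t)\mapsto s^\lambda t^\eta h(u)$. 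I then define the top arrow by $\Phi(\sum f_i\otimes g_i) = \sum_i g_i(\mathrm{Id})f_i \in V(\lambda)$.

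With these explicit formulas, commutativity of the square is a formality: chasing $F = \sum f_i\otimes g_i$ down then across via $\pi^*$ (which sets the second $U_+$-coordinate to $\mathrm{Id}$) yields $(s,u,t)\mapsto s^\lambda t^\eta \sum_i f_i(u)g_i(\mathrm{Id})$, matching the function obtained by applying $\Phi$ followed by the right vertical. To verify that $\Phi(F) \in V_\eta(\lambda)$, i.e.\ that $e_i^{\eta(H_{\alpha_i})+1}\circ_r \Phi(F) = 0$, I would translate the diagonal $U_-$-invariance of $F$ into a relation that lets one transfer the raising operation from the first factor to the second, where each $g_i \in V(\eta)$ is annihilated on the left by $e_i^{\eta(H_{\alpha_i})+1}$ after evaluating at the identity. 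This is the standard interchange between the left $U_-$-action on $V(\eta)$ and the right $U_+$-action on the ambient $V(\lambda)$ used throughout the Zhelobenko literature.

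For the isomorphism of the top row, I would appeal to the classical branching theorem of Zhelobenko (cited as \cite{Zh} in the paper), which is precisely the statement that $\Phi$ is a bijection; an explicit inverse can be written by uniquely extending $h \in V_\eta(\lambda)$ to a $U_-$-invariant element of $V(\lambda)\otimes V(\eta)$ using weight considerations and the Shapovalov pairing on $V(\eta)$. The main technical obstacle is the isomorphism itself: commutativity and well-definedness of $\Phi$ fall out of the big-cell formulas, but verifying bijectivity requires a weight-by-weight matching $V_{\eta,\mu^*-\eta}(\lambda) \cong W(\mu,\lambda,\eta)$ together with a nonvanishing argument on each multiplicity space. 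Injectivity follows from the fact that a $U_-$-invariant function in the big cell is determined by its restriction to $\{v = \mathrm{Id}\}$ via the transitive action of $U_-$, while surjectivity uses the explicit inverse construction to realize each element of $V_\eta(\lambda)$ as an evaluation.
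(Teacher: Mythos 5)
Your description of the top arrow is the correct one: it is exactly the pullback $\pi^*$, which on the big cell amounts to evaluation at the identity in the second $U_+$ factor, so the commutativity check is the same routine computation as in the paper. The substantive difference is how you obtain the \emph{isomorphism}. The paper has already observed, just above the lemma, that $\pi\colon T\times U_+\times T \to (T\times U_+\times T\times U_+)/U_+$ is an isomorphism of affine varieties that intertwines the left $U_+\times U_+$ action on $(U_+\times U_+)/U_+$ with the pair (left action, right action) of $U_+$ on $\C[U_+]$. From this, $\pi^*$ is an isomorphism of algebras, and the proof needs only to check that the defining conditions of the two subspaces are exchanged: the second-factor conditions $e_i^{\eta(H_{\alpha_i})+1}\circ_\ell F = 0$ become $e_i^{\eta(H_{\alpha_i})+1}\circ_r \pi^*(F) = 0$, and the first-factor conditions $e_i^{\lambda(H_{\alpha_i})+1}\circ_\ell F = 0$ are preserved. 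Bijectivity of the restricted map is then automatic. You instead invoke Zhelobenko's branching theorem for bijectivity and sketch an inverse via the Shapovalov pairing. This is not wrong, but it is circuitous and works against the surrounding context: the next lemma of the paper explicitly says it will reprove the Zhelobenko-style identification rather than quote it, so importing it here undercuts the purpose of introducing $\pi$ at all. Your sketch of well-definedness (``transfer the raising operation from the first factor to the second via the diagonal invariance'') is also slightly off target; what makes the conditions match is not the diagonal invariance of $F$ (that is only what makes $F$ a well-defined function on the $U_+$-quotient) but rather the intertwining property of $\pi$: under the slice $y=\mathrm{Id}$, the left action on the second factor is exchanged for the right action on the $u$-coordinate. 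If you state and use that intertwining explicitly, the well-definedness and the bijectivity both fall out in one stroke, and the appeal to Zhelobenko becomes unnecessary.
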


\begin{proof}
We take a function $f \in V(\lambda) \otimes V(\eta) \subset \C[(T \times U_+ \times T \times U_+)/U_+]$ and analyze the pullback $\pi^*(f).$  The function $f$ satisfies the equations $e_i^{\lambda(H_{\alpha_i}) + 1} \circ_{\ell} f = 0$ in the first $U_+$ component and $e_i^{\eta(H_{\alpha_i}) + 1} \circ_{\ell} f = 0$ in the second.  By definition of $\pi$, these equations are satisfied if and only if $e_i^{\lambda(H_{\alpha_i}) + 1} \circ_{\ell} \pi^*(f) = 0$, and $e_i^{\eta(H_{\alpha_i}) + 1} \circ_r \pi^*(f) = 0.$    
\end{proof}

Now we consider what happens when $f \in (V(\lambda)\otimes V(\eta))^{U_+}$ has weight $\mu^*$, this is the case when $f$ represents an intertwiner $V(\mu) \to V(\lambda) \otimes V(\eta).$   In the coordinate ring $\C[G] = \bigoplus_{\lambda \in \Delta} V(\lambda) \otimes V(\lambda^*)$, specialization at $Id$ is contraction of $V(\lambda)$ against the dual $V(\lambda^*).$  The coordinate ring $\C[U_- \backslash G] \subset \C[G]$ is the subalgebra of spaces $\C v_{-\eta} \otimes V(\eta),$ it therefore follows that $\pi^*(f) \in \C[U_+]$ is the coefficient of the $v_{\eta}$ component of $f$.  Since $f$ was chosen to have weight $\mu^*$, $\pi^*(f) \in V_{\eta}(\lambda)$ must be a $\mu^* - \eta$ weight vector.  

\begin{lemma}
The space $W(\mu, \lambda, \eta) \cong (V(\lambda) \otimes V(\eta))^{U_-}_{\mu^*}$ is isomorphic to $V_{\eta, \mu^* - \eta}(\lambda).$ 
\end{lemma}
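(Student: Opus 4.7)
The previous lemma already gives a vector space isomorphism
$\pi^{*}\colon (V(\lambda) \otimes V(\eta))^{U_-} \xrightarrow{\sim} V_{\eta}(\lambda)$, constructed by pulling back along the map $\pi:(s,u,t)\mapsto (s,u,t,Id)$. My plan is to refine this isomorphism by tracking $T$-weights, and then combine it with the standard identification of the triple invariant space $W(\mu,\lambda,\eta)$ with weight subspaces of tensor products. So the overall strategy has three steps: (i) recall $W(\mu,\lambda,\eta)\cong (V(\lambda)\otimes V(\eta))^{U_-}_{\mu^*}$; (ii) show $\pi^*$ is $T$-equivariant for a specific identification of tori; (iii) conclude that it restricts to the claimed isomorphism of weight pieces.

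For step (i), an invariant in $V(\mu)\otimes V(\lambda)\otimes V(\eta)$ is the same as a $G$-equivariant map $V(\mu)^* = V(\mu^*)\to V(\lambda)\otimes V(\eta)$, and such a map is determined by the image of the highest weight vector of $V(\mu^*)$. That image is a $U_-$-invariant of weight $\mu^*$ (in the paper's conventions, where $U_-$-invariants are the highest weight vectors), giving the isomorphism cited in the statement.

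For step (ii), I would recall from the setup of $\pi$ that evaluating at $Id$ in the second $U_+$-factor corresponds, under the realization $V(\eta)\subset \C[U_+]$ that sends $1\in \C[U_+]$ to $v_\eta$, to extracting the coefficient of the highest-weight vector $v_\eta$. Thus for $f\in V(\lambda)\otimes V(\eta)$ one has $\pi^*(f) = \langle v_\eta^*, f\rangle_{V(\eta)} \in V(\lambda)$, where the pairing only contracts the $V(\eta)$-factor. If $f$ has total weight $\mu^*$ and its $V(\eta)$-factor is fixed to have weight $\eta$ by this contraction, then $\pi^*(f)$ carries the residual weight $\mu^* - \eta$ in $V(\lambda)$. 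In particular $\pi^*$ sends $(V(\lambda)\otimes V(\eta))^{U_-}_{\mu^*}$ into $V_{\eta,\mu^*-\eta}(\lambda)$.

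For step (iii), I would show this restriction is a bijection. Injectivity is inherited from the previous lemma. For surjectivity, any $g\in V_{\eta,\mu^*-\eta}(\lambda)$ lifts under $(\pi^*)^{-1}$ to some $\tilde g\in (V(\lambda)\otimes V(\eta))^{U_-}$, and the weight calculation above (run in reverse) forces $\tilde g$ to have total weight $\mu^*$, since the $V(\lambda)$-component of $\tilde g$ has weight $\mu^*-\eta$ and the lift fixes the $V(\eta)$-component at $v_\eta$ (weight $\eta$). Composing with the isomorphism from step (i) yields $W(\mu,\lambda,\eta)\cong V_{\eta,\mu^*-\eta}(\lambda)$. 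The main obstacle, such as it is, is bookkeeping: checking the precise $T\times T\times T$-equivariance of $\pi^*$ with the dual/opposite conventions used for the three torus factors in the previous subsection, and making sure the ``coefficient of $v_\eta$'' interpretation of evaluation at $Id$ is consistent with the embedding $V(\eta)\hookrightarrow \C[U_+]$ used in the paper.
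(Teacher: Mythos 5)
Your proposal is correct and follows essentially the same route as the paper: restrict the isomorphism $\pi^*$ from the previous lemma to weight pieces, using the weight shift by $\eta$ computed just before the lemma. The only cosmetic difference is in the surjectivity step, where the paper observes that $\pi^*$ matches up the weight-space decompositions of $(V(\lambda)\otimes V(\eta))^{U_+}$ and $V_\eta(\lambda)$ piecewise, rather than explicitly inverting $\pi^*$ and checking the weight of the lift as you do.
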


\begin{proof}
We have already established a $1-1$ map $\pi^*: (V(\lambda) \otimes V(\eta))^{U_+}_{\mu^*} \to V_{\eta, \mu^* - \eta}(\lambda)$.  To show that this map is also onto, we observe that $(V(\lambda) \otimes V(\eta))^{U_+}$ is a direct sum of its dominant weight spaces, each of which maps to a distinct $V_{\eta, \mu^* - \eta}(\lambda) \subset V_{\eta}(\lambda),$ and that $(V(\lambda) \otimes V(\eta))^{U_+} \cong V_{\eta}(\lambda).$
\end{proof}

The torus $T_1 \times T_2 \times T_3$ acts on the space $V_{\eta, \mu^* - \eta}(\lambda)$ with character $((\mu^* - \eta + \eta)^*, \lambda, \eta) =(\mu, \lambda, \eta).$ We have now established a $T^3-$stable map of algebras, identifying $\C[P_3(G)]$ with the subspace $\bigoplus_{\mu, \lambda, \eta} V_{\eta, \mu^* - \eta}(\lambda)t^{\eta} \subset \C[U_- \backslash G \times T].$

\begin{theorem}\label{3tensor}
For each $\bold{i}$ there is a $T^3$ stable valuation $v_{\bold{i}, 3}$ on $\C[P_3(G)]$, with associated graded ring equal to the affine semigroup algebra $\C[C_{\bold{i}}(3)].$ The torus $T_1 \times T_2 \times T_3$ acts on $\C[C_{\bold{i}}(3)]$ with characters $\pi_1, \pi_2, \pi_3: C_{\bold{i}}(3) \to \Delta$. Furthermore, $v_{\bold{i}, 3}(B_3)$ coincides with the integer points in $C_{\bold{i}}(3).$
\end{theorem}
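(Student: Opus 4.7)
The plan is to promote the Berenstein--Zelevinsky string parameter valuation $v_{\bold{i}}$ on $\C[U_- \backslash G]$ to a valuation on $\C[P_3(G)]$ via the algebra isomorphism $\C[P_3(G)] \cong \bigoplus_{\mu, \lambda, \eta} V_{\eta, \mu^*-\eta}(\lambda) t^\eta$ inside $\C[U_- \backslash G \times T]$ established in the preceding two lemmas. Concretely, I set $v_{\bold{i}, 3}(f \cdot t^\eta) := (v_{\bold{i}}(f), \eta)$, giving values in the lex-ordered group $\Delta \times \Z^N \times \Delta$. Since $\C[T]$ is a Laurent polynomial ring and the $t^\eta$ factor contributes only a character grading, tensoring extends $v_{\bold{i}}$ to a valuation on $\C[U_- \backslash G \times T]$, and restriction to the subalgebra $\C[P_3(G)]$ yields the desired valuation.

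The image of $v_{\bold{i}, 3}$ is computed by restricting the bijection of Theorem \ref{tenspolytope} to each summand. For each $(\mu, \lambda, \eta)$, the intersection $B_{\eta, \mu^*-\eta}(\lambda) = B \cap V_{\eta, \mu^*-\eta}(\lambda)$ maps bijectively onto the integer points of the fiber polytope $C_{\bold{i}}(\mu, \lambda, \eta)$; tagging with $t^\eta$ produces the corresponding basis element of $B_3$. By definition $C_{\bold{i}}(3)$ is the union of these fibers under the projections $(\pi_1, \pi_2, \pi_3)$, so the disjoint union over $\Delta^3$ shows $v_{\bold{i}, 3}(B_3)$ coincides with the integer points of $C_{\bold{i}}(3)$. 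For the torus action, each basis element $b \cdot t^\eta$ in $V_{\eta, \mu^*-\eta}(\lambda) t^\eta$ is a $T_1 \times T_2 \times T_3$ weight vector with weight $(\mu, \lambda, \eta)$, and the projections $\pi_1, \pi_2, \pi_3$ were constructed to extract precisely these weights from $(\lambda, \vec{t}, \eta)$. This immediately gives $T^3$-stability of $v_{\bold{i}, 3}$ and the claim that $T^3$ acts on $\C[C_{\bold{i}}(3)]$ through $(\pi_1, \pi_2, \pi_3)$.

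The identification $\gr_{v_{\bold{i}, 3}} \C[P_3(G)] \cong \C[C_{\bold{i}}(3)]$ then follows by combining Proposition \ref{equivalenceprop} with the observation that $B_3$ is a basis whose elements take pairwise distinct valuation values, forcing the associated graded to be spanned by the images of $B_3$ with multiplication determined by vector addition in $C_{\bold{i}}(3)$. The main obstacle is verifying the strong filtration property, equivalently the multiplicativity $v_{\bold{i}, 3}(b b') = v_{\bold{i}, 3}(b) + v_{\bold{i}, 3}(b')$ on $B_3$. Because the character grading contributed by $\C[T]$ adds cleanly in $\eta$ without interfering with string parameters, this reduces to the corresponding property for $v_{\bold{i}}$ on $\C[U_- \backslash G]$, which is exactly the content of Proposition \ref{flagimage}. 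Once multiplicativity is in hand, closure of $v_{\bold{i}, 3}(B_3)$ under addition is automatic, and the integer points of $C_{\bold{i}}(3)$ generate the affine semigroup underlying $\C[C_{\bold{i}}(3)]$.
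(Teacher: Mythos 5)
Your proposal is correct and follows essentially the same route as the paper: build $v_{\bold{i},3}$ by combining the string parameter valuation $v_{\bold{i}}$ on $\C[U_- \backslash G]$ with the $\eta$-grading coming from the $\C[T]$ factor, read off $T^3$-stability from the fact that the dual canonical basis elements $b\,t^\eta$ are $T_1\times T_2\times T_3$-weight vectors of weight $(\mu,\lambda,\eta)$, and identify the image with the integer points of $C_{\bold{i}}(3)$ via Theorem \ref{tenspolytope} applied fiberwise. One small slip: the multiplicativity $v_{\bold{i},3}(bb')=v_{\bold{i},3}(b)+v_{\bold{i},3}(b')$ does not reduce to Proposition \ref{flagimage} (which only describes the image of $v_{\bold{i}}$); the relevant input is the earlier unnumbered lemma showing that $w_{\bold{i}}$, hence $v_{\bold{i}}$, is a valuation on $\C[U_+]$, combined with Proposition \ref{compositeprop} to adjoin the $\eta$-grading.
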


\begin{proof}
The valuation $v_{\bold{i}, 3}$ is constructed from $\bold{<}$ on $\Delta$
and $v_{\bold{i}}$ on $\C[U_- \backslash G].$  It is invariant with respect to $T_1 \times T_2 \times T_3,$ because $v_{\bold{i}}, \bold{<}: \C[U_- \backslash G \times T \to C_{\bold{i}}$ is $T^4$ invariant. By construction the character with respect to the torus action on $\C[C_{\bold{i}}(3)]$ corresponds to the maps $\pi_1, \pi_2, \pi_3$.  The image $v_{\bold{i}, 3}(\C[P_3(G)])$ is then the image of those $b\otimes t^{\eta} \in \C[U_-\backslash G \times T]$ which lie in $\bigoplus_{\mu, \lambda, \eta} V_{\eta, \mu^* - \eta}(\lambda)t^{\eta}$ under $v_{\bold{i}}, \bold{<}$.  This coincides with the integer points in $C_{\bold{i}}(3)$ by construction. 
\end{proof}

This exposition has been for the semisimple case, but as in \cite{AB}, everything can be generalized readily to the reductive case. The weights that define a non-zero $W(\mu, \lambda, \eta)$ are of the form $\mu' + \tau_1,$ $\eta' + \tau_2$ and $\lambda' + \tau_3$ where $\tau_i$ are characters of the center $Z(G)$ with $\tau_1 + \tau_2 + \tau_3 = 0,$ and $\mu', \eta', \lambda'$ are dominant weights of the semisimple part of $G.$  The subspace $V_{\eta,  \mu - \eta}(\lambda)$ is the same as the subspace $V_{\eta', \mu' - \beta' + (\tau_1 - \tau_2)}(\lambda' + \tau_3) = V_{\eta', \mu' -\eta' + \tau_3}(\lambda' + \tau_3) = V_{\eta', \mu' - \eta'}(\lambda')\otimes \C\tau_3.$  So this space inherits the subset of the dual canonical basis of the semi-simple part of $G$ coming from $V_{\eta', \mu' - \beta'}(\lambda')$ tensored with the character $\tau_3.$

\section{Proof of Theorems \ref{character} and \ref{configuration} }\label{laststep}

We construct filtrations on $\C[\mathcal{X}(F_g, G)]$ and $\C[P_n(G)]$ with toric associated graded algebras $\C[C_{\bold{i}}(\Gamma)], \C[C_{\bold{i}}(\tree, \vec{\lambda})].$    Choose $\Gamma$, with a total ordering on $E(\Gamma)$, a total ordering on $V(\Gamma)$, and an assignment $\bold{i}: V(\Gamma) \to R(w_0).$ 

The efforts of Section \ref{step1} give a filtration on $\C[\mathcal{X}(F_g, G)]$ by $(\Delta, <)^{E(\Gamma)}$ with associated graded algebra $\C[P_{\Gamma}(G)].$  In Section \ref{step2} we construct a $T^3$ invariant valuation on  $\C[P_3(G)]$ with associated graded algebra $\C[C_{\bold{i}}(3)].$ We use the ordering on $V(\Gamma)$ and the assignment $\bold{i}: V(\Gamma) \to R(w_0)$ to define a full rank, $T^{E(\Gamma)}-$stable valuation on $\bigotimes_{v \in V(\Gamma)} \C[P_3(G)]$ with associated graded algebra $\bigotimes_{v \in V(\Gamma)} \C[C_{\bold{i}(v)}(3)]$.  This valuation induces a valuation on the sub-algebra $\C[P_{\Gamma}(G)] \subset \bigotimes_{v \in V(\Gamma)} \C[P_3(G)]$  of $T^{E(\Gamma)}$ invariants, and by Theorem \ref{3tensor} and Proposition \ref{Gassociatedgraded} the associated graded algebra is the semigroup algebra of the following polyhedral cone. 

\begin{definition}
We let $\pi_{v, i}$ be the projection map on $C_{\bold{i}(v)}(3)$ defined by the edge $i$
incident on the vertex $v \in V(\Gamma).$ 

We define $C_{\bold{i}}(\Gamma)$ to be the toric fiber product cone in $\prod_{v \in V(\Gamma)} C_{\bold{i}(v)}(3)$ defined by the conditions $\pi_{v, i} = \pi_{u, i}^*$ for all 
edges $i$ with endpoints $u, v$. 
\end{definition} 

 Theorem \ref{character} follows from Proposition \ref{compositeprop}. The same program can be carried out on the algebra $\C[P_n(G)]$, giving a $T^n$-stable valuation with associated graded algebra $\C[C_{\bold{i}}(\tree)].$  Theorem \ref{configuration} then follows by specializing the weights at the leaves of $\tree$ to $\vec{\lambda},$ using Theorem \ref{3tensor}.

\begin{proposition}\label{configbasis}
For every choice of a trivalent tree $\tree$ with $n$ ordered leaves, and an assignment $\bold{i}: V(\tree) \to R(w_0)$ we have:

\begin{enumerate}
\item a basis $B(\tree, m\vec{\lambda}) \subset H^0(P_{\vec{\lambda}^*}(G), \mathcal{L}(m\vec{\lambda}^*)) =$  $(V(\lambda_1) \otimes \ldots \otimes V(\lambda_n))^G$,\\
\item a labelling $v_{\tree, \bold{i}}: B(\tree, m\vec{\lambda}) \to C_{\bold{i}}(\tree, m\vec{\lambda}) \subset (\Delta \times \Z_{\geq 0}^N \times \Delta)^{V(\tree)}$.\\
\end{enumerate}

\end{proposition}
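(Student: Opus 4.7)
The plan is to harvest the basis and labelling directly from the ingredients used to construct the valuation $v_{\bold{i}, \tree}$ in the proof of Theorem \ref{configuration}, essentially by tracking which basis elements correspond to which lattice points through every stage of the composite filtration. At each vertex $v \in V(\tree)$, Theorem \ref{3tensor} furnishes a $T^3$-stable basis $B_3^v \subset \C[P_3(G)]$ together with the valuation $v_{\bold{i}(v), 3}$ whose image on $B_3^v$ is exactly the integer lattice in $C_{\bold{i}(v)}(3)$. Taking the external tensor product over vertices (with the chosen ordering on $V(\tree)$ organizing the composite lexicographic valuation) gives a $T^{E(\tree)}$-equivariant basis of $\bigotimes_v \C[P_3(G)]$ labelled by lattice points in $\prod_v C_{\bold{i}(v)}(3)$.

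Next, I would pass to the $T^{E(\tree)}$-invariant subalgebra, which by the identifications in Section \ref{step1} is $\C[P_{\tree}(G)]$. Because the product valuation is $T^{E(\tree)}$-equivariant and each tensor product of basis elements is already a $T^{E(\tree)}$-weight vector, the subset $\bigotimes_v B_3^v \cap \C[P_{\tree}(G)]$ is a basis of the invariant subalgebra whose valuation images are precisely the lattice points in $C_{\bold{i}}(\tree)$, namely those tuples in $\prod_v C_{\bold{i}(v)}(3)$ satisfying the fiber product conditions $\pi_{v,i} = \pi_{u,i}^*$ along each internal edge. Combining this with the branching filtration from Proposition \ref{treeweight} via Proposition \ref{compositeprop}, and using that each step has a domain associated graded, lifts this back to a basis of $\C[P_n(G)]$ labelled by $C_{\bold{i}}(\tree)$.

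Finally, the configuration space $P_{\vec{\lambda}^*}(G)$ arises from $P_n(G)$ via a right $T^n$ quotient, so its graded component $H^0(P_{\vec{\lambda}^*}(G), \mathcal{L}(m\vec{\lambda}^*)) = (V(\lambda_1) \otimes \cdots \otimes V(\lambda_n))^G$ is exactly the $T^n$-isotypical subspace of weight $m\vec{\lambda}^*$ inside $\C[P_n(G)]$. Since every basis element produced above is already a $T^n$-weight vector, selecting those of leaf weight $m\vec{\lambda}^*$ yields the desired basis $B(\tree, m\vec{\lambda})$, and restricting $v_{\bold{i}, \tree}$ to this subspace produces the labelling into $C_{\bold{i}}(\tree, m\vec{\lambda})$, which is by definition the appropriate slice of $C_{\bold{i}}(\tree)$.

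The only substantive step, which I expect to be the main obstacle if any, is verifying that the leaf-weight constraint cuts out precisely the slice $C_{\bold{i}}(\tree, m\vec{\lambda})$ rather than a merely affinely equivalent polytope. This reduces to tracing the three torus characters $\pi_1, \pi_2, \pi_3$ of Theorem \ref{3tensor} through the gluing of vertices along internal edges, and confirming that the leaf projections agree with the standard identification of $T^n$-weights on $\C[P_n(G)]$ with the external dominant weights; this is bookkeeping that is already implicit in the definition of the fiber product $C_{\bold{i}}(\tree)$.
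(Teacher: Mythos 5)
Your proof follows essentially the same route the paper intends; the paper treats this proposition as "immediate" from the construction in Section 4, and what you have done is unpack exactly why. You correctly identify the key structural fact that makes the basis exist (as opposed to merely the valuation): the branching filtration of Proposition \ref{treeweight} is induced by a direct sum decomposition of $\C[P_n(G)]$ into intertwiner-space tensor products, so the dual canonical bases $B_3^v$ chosen at each vertex of $\tree$ give an honest basis of $\C[P_n(G)]$ itself (not just of its associated graded), and the composite valuation from Proposition \ref{compositeprop} records string parameters and edge weights directly on these basis vectors. Restricting to the $T^n$-weight $m\vec{\lambda}^*$ component then picks out the basis of $(V(m\lambda_1)\otimes\cdots\otimes V(m\lambda_n))^G$, and the slice $C_{\bold{i}}(\tree, m\vec{\lambda})$ is cut out by precisely that leaf-weight constraint via the projections $\pi_{v,i}$. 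The one place you flag as potential trouble, namely that the leaf-weight condition yields the intended slice rather than an affinely equivalent polytope, is handled by the fact (Theorem \ref{3tensor}) that the $T_1\times T_2\times T_3$ action on $\C[C_{\bold{i}}(3)]$ acts exactly through $\pi_1,\pi_2,\pi_3$, so the characters propagate canonically through the fiber product definition of $C_{\bold{i}}(\tree)$; there is no discrepancy. In short, the proposal is correct and matches the paper's implicit argument.
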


We conclude by showing that the image of the valuations we have constructed
coincide with all of the lattice points in their corresponding Newton-Okounkov bodies.  This implies that the corresponding affine semigroup algebras are normal.

\begin{proposition}\label{normalsemigroup}
The integer points in $C_{\bold{i}}(\Gamma)$ and $C_{\bold{i}}(\tree)$ are in bijection with the images of the induced valuations $v_{\bold{i}, \Gamma}(\C[\mathcal{X}(F_g, G)])$ and $v_{\bold{i}, \tree}(\C[P_n(G)]),$ respectively.
\end{proposition}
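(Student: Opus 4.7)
My plan is to deduce this directly from the fact that the associated graded algebras of $v_{\bold{i},\Gamma}$ and $v_{\bold{i},\tree}$ have already been identified as the affine semigroup algebras $\C[C_{\bold{i}}(\Gamma)]$ and $\C[C_{\bold{i}}(\tree)]$, coupled with the tautological observation that, for any valuation on a domain, a grade $w$ lies in the image precisely when the associated graded piece $F_{\leq w}/F_{<w}$ is nonzero. This is essentially an unpacking of Proposition \ref{equivalenceprop} in the reverse direction, so the content of the statement is really the identification of the associated graded.

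First I would recall the construction: by Section \ref{step1}, we have a strong filtration on $\C[\mathcal{X}(F_g,G)]$ whose associated graded is $\C[P_\Gamma(G)] = [\prod_{v\in V(\Gamma)} \C[P_3(G)]]^{T^{E(\Gamma)}}$. By Theorem \ref{3tensor}, for each vertex $v$ the vertex valuation $v_{\bold{i}(v),3}$ on $\C[P_3(G)]$ has associated graded $\C[C_{\bold{i}(v)}(3)]$ with image exactly the integer points of $C_{\bold{i}(v)}(3)$, and it is $T^3$-equivariant with characters $\pi_{v,i}$. Tensoring these vertex valuations, ordered by the chosen total order on $V(\Gamma)$, gives a valuation on $\bigotimes_v \C[P_3(G)]$ with associated graded $\bigotimes_v \C[C_{\bold{i}(v)}(3)]$, whose image is the integer points of $\prod_v C_{\bold{i}(v)}(3)$.

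Next I would identify the $T^{E(\Gamma)}$-invariants of both algebras. On the associated graded side, a monomial $\bigotimes_v \chi_{p_v}$ (with $p_v \in C_{\bold{i}(v)}(3) \cap \Z^{\text{dim}}$) is $T^{E(\Gamma)}$-invariant exactly when, for every edge $i$ with endpoints $u,v$, the characters match as $\pi_{v,i}(p_v) = \pi_{u,i}(p_u)^*$. These are by definition the equations cutting out the toric fiber product $C_{\bold{i}}(\Gamma)$, so the invariant subalgebra is the affine semigroup algebra $\C[C_{\bold{i}}(\Gamma)]$, with one-dimensional graded piece at each integer point of $C_{\bold{i}}(\Gamma)$ and zero elsewhere. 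Composing with the branching filtration via Proposition \ref{compositeprop} identifies this as the associated graded of $v_{\bold{i},\Gamma}$.

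Now I would close the argument: by Proposition \ref{equivalenceprop}, $v_{\bold{i},\Gamma}(a) = w$ for some $a \in \C[\mathcal{X}(F_g,G)]$ if and only if $a \in F_{\leq w} \setminus F_{<w}$, equivalently $\bar{a} \neq 0$ in the graded piece $F_{\leq w}/F_{<w}$. Since this graded piece is one-dimensional precisely when $w \in C_{\bold{i}}(\Gamma) \cap \Z^M$ and zero otherwise, the image of $v_{\bold{i},\Gamma}$ is exactly the lattice points of the cone, with a unique class in the associated graded attached to each point. The same chain of reasoning, substituting $\tree$ for $\Gamma$ (so that no spanning-tree surgery is needed and the relevant torus is $T^n$), yields the statement for $v_{\bold{i},\tree}$. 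The only step requiring care is the verification that taking $T^{E(\Gamma)}$-invariants really commutes with the composition of filtrations and produces the semigroup algebra of the fiber product cone rather than merely a subalgebra, which I expect to be the main (though mild) obstacle; this is handled by noting that the tensor-product valuation is $T^{E(\Gamma)}$-equivariant with the same characters as the filtration, so invariants pass cleanly to the associated graded.
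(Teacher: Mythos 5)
Your argument is correct and is essentially the paper's own, just phrased one level of abstraction higher: the paper says directly that each integer point of $C_{\bold{i}}(\Gamma)$ indexes a tensor product $\otimes_v b_v$ of dual canonical basis elements with matching weight data, which lies in the basis $B(\Gamma)$ and hence in the image of $v_{\bold{i},\Gamma}$, while you recast this as the tautology that the image of a valuation is the support of its associated graded, and then quote the identification $\gr \cong \C[C_{\bold{i}}(\Gamma)]$. The one subtlety you flag at the end (that passing to $T^{E(\Gamma)}$-invariants is compatible with composing filtrations, so that the invariant subalgebra of the graded is the graded of the invariant subalgebra) is indeed the point one must be careful about; the paper handles it implicitly via the $T$-stability clauses in Propositions \ref{degrep}, \ref{treeweight} and Theorem \ref{3tensor}, and your justification via equivariance of the tensor-product valuation is the right way to make it explicit.
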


\begin{proof}
Each basis member of $\C[\mathcal{X}(F_g, G)]$ gives an element of $C_{\bold{i}}(\Gamma)$ by the constructions in Sections \ref{step1}, \ref{step2}. If $\vec{t} \in C_{\bold{i}}(\Gamma)$ is an integer point, it is likewise an integer point in $(\Delta \times \Z_{\geq 0}^N \times \Delta)^{V(\tree)}$, and therefore corresponds to a product $\otimes_{v \in V(\Gamma)} b_{v}$ of dual canonical basis elements which defines an regular function in $\C[\mathcal{X}(F_g, G)]$ with value equal to $\vec{t}$ by construction.  The same argument applies in the case $\C[P_n(G)].$
\end{proof}

\section{Examples}\label{example}

We describe the cones $C_{\bold{i}}(3)$ for $G= SL_m(\C)$ and all rank $2$ simple, simply-connected groups.  For $G = SL_m(\C)$ we describe particular instances of $B_{\bold{i}}(\Gamma), B_{\bold{i}}(\tree).$  The inequalities we present are culled from \cite{BZ2}, the treatment by Littelman \cite{Li}, and Theorem \ref{tenspolytope}. 

\subsection{Type A}

For $G = SL_m(\C)$, we take $\bold{i}$ to be the ``good'' decomposition $w_0 = s_1(s_2s_1)(s_3s_2s_1) \ldots (s_{m-1}\ldots s_1)$, (see \cite{Li}). The polyhedron $C_{\bold{i}}(3)$ is then the cone $BZ_3(SL_m(\C))$ of Berenstein-Zelevinsky triangles \cite{BZ3}.  

\begin{definition}
For this definition we refer to Figure \ref{triangle}.  A BZ triangle $T \in BZ_3(SL_m(\C))$
is an assignment of non-negative integers to vertices of the version of the diagram in Figure \ref{triangle} with $2(m-1)$ vertices on a side.  If $v, w$ are a pair of vertices which are across a hexagon from a pair $u, y$, then $T(v) + T(w) = T(u) + T(y).$ 

We let $a_1, \ldots, a_{2m-2}$  $= b_1, \ldots, b_{2m-2} =$  $c_1, \ldots, c_{2m-2} = a_1$ label the vertices clockwise around the boundary of the diagram.  This lets us define the following three projection maps $\pi_1, \pi_2, \pi_3: BZ_3(SL_m(\C)) \to \Delta_{SL_m(\C)}.$ 

\begin{equation}
\pi_1(T) = (a_1 + a_2, \ldots, a_{2m-3} + a_{2m-2})\\
\end{equation}

$$ 
\pi_2(T) = (b_1 + b_2, \ldots, b_{2m-3} + b_{2m-2})
$$

$$
\pi_3(T) = (c_1 + c_2, \ldots, c_{2m-3} + c_{2m-2})
$$ 

\end{definition}

\begin{figure}[htbp]
\centering
\includegraphics[scale = 0.26]{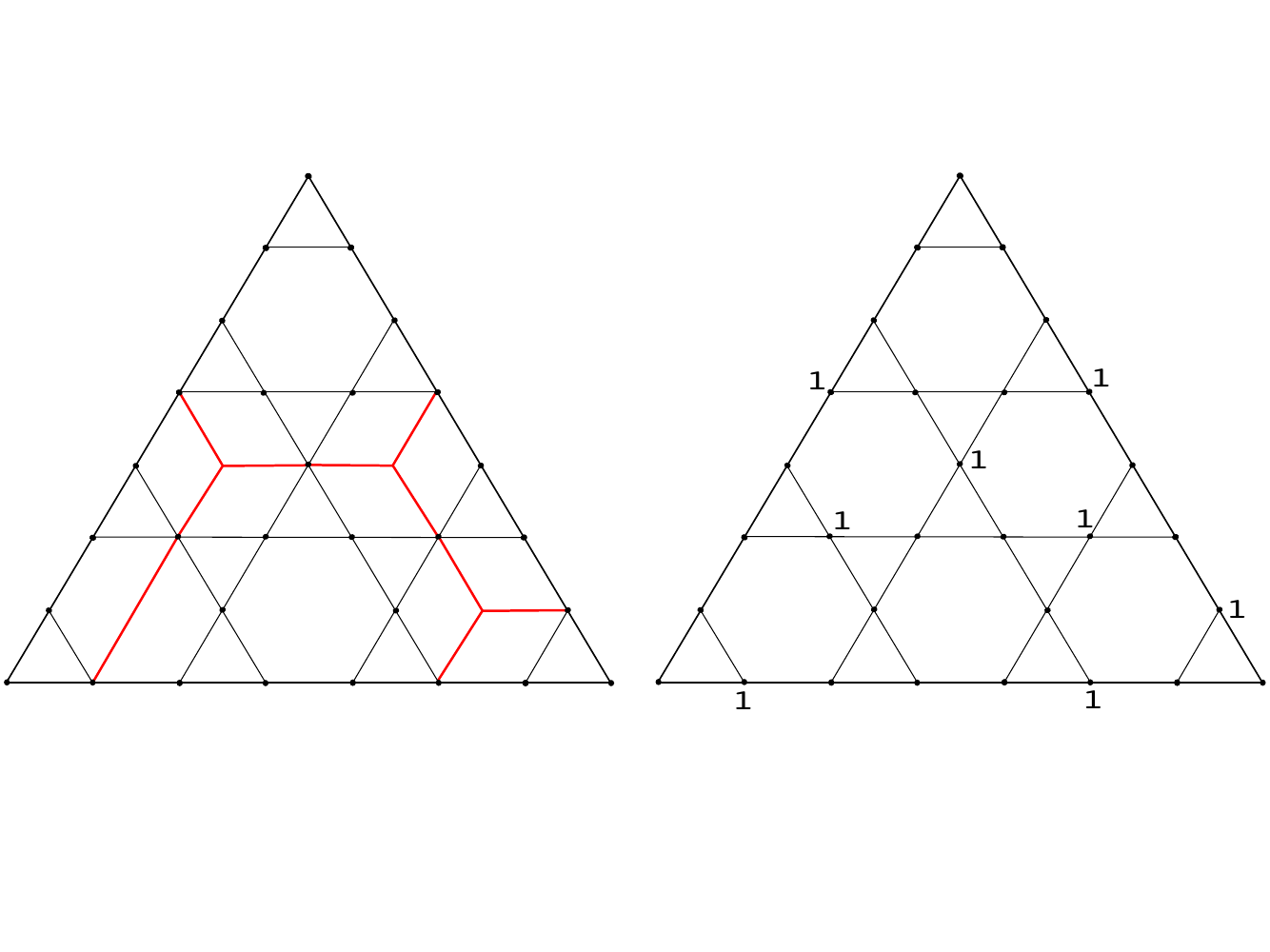}
\caption{Left: A honeycomb weighting. Right: Dual weighting by non-negative integers.}
\label{triangle}
\end{figure}

We can associate a dual graph to each $T \in BZ_3(SL_m(\C))$ by replacing each entry of weight $a$ with an edge to the center of its adjacent hexagon weighted $a$. The resulting graphs are also called honeycombs, and have been studied by a number of authors, see e.g. \cite{KTW04}, \cite{GP}.

\begin{figure}[htbp]
\centering
\includegraphics[scale = 0.27]{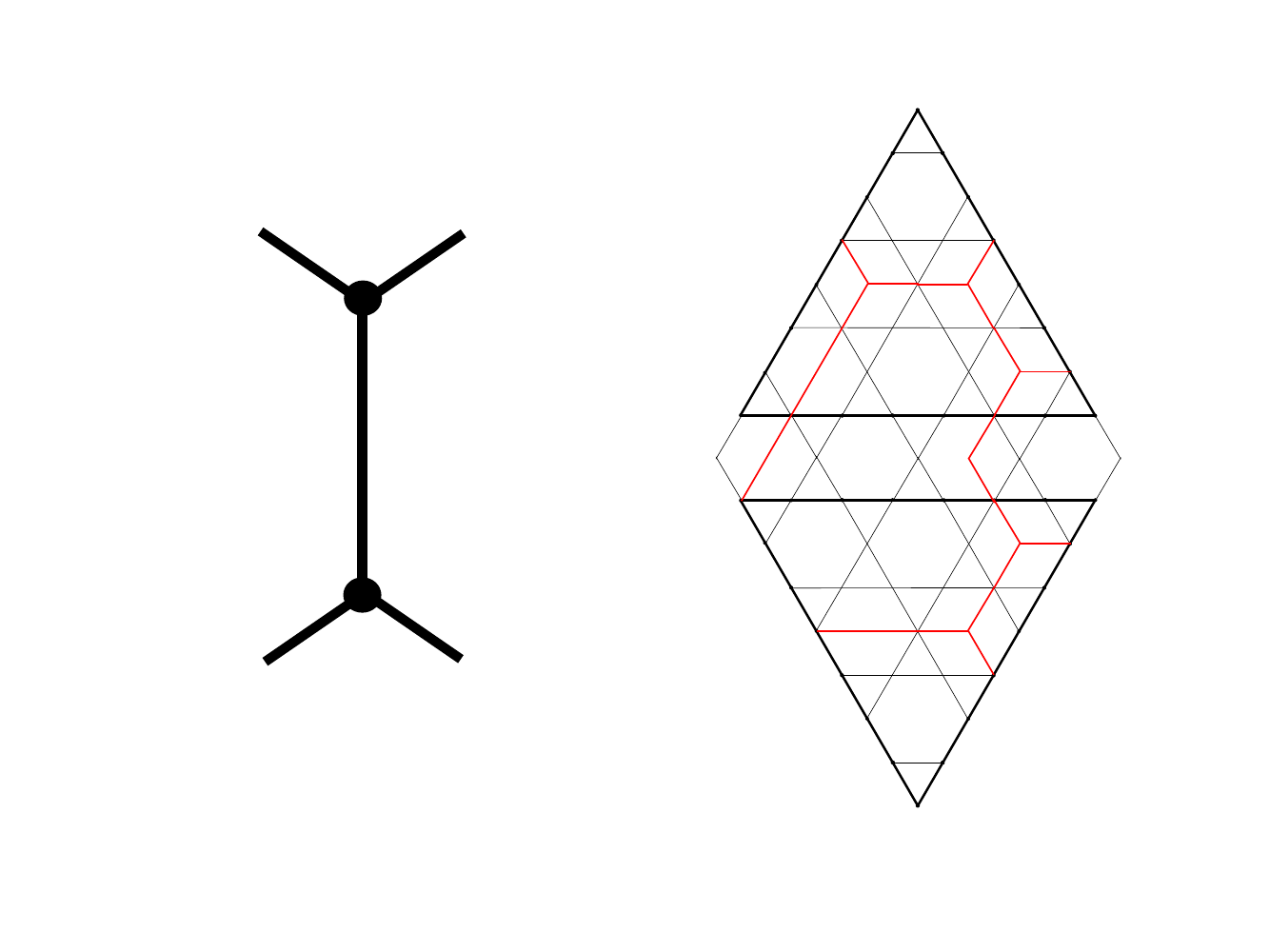}
\caption{A BZ quilt with dual $4-$tree.}
\label{quilt}
\end{figure}

The cone of $\Gamma-$BZ quilts $BZ_{\Gamma}(SL_m(\C)) \subset \prod_{v \in V(\Gamma)}BZ_3(SL_m(\C))$ is then defined to be those tuples $(T_v)$ with $\pi_{i, v}(T_v) = \pi_{i, u}(T_u)^*$ when an edge $i$ joins $u$ and $v,$  see Figures \ref{quilt} and \ref{quilt2}.

\begin{figure}[htbp]
\centering
\includegraphics[scale = 0.27]{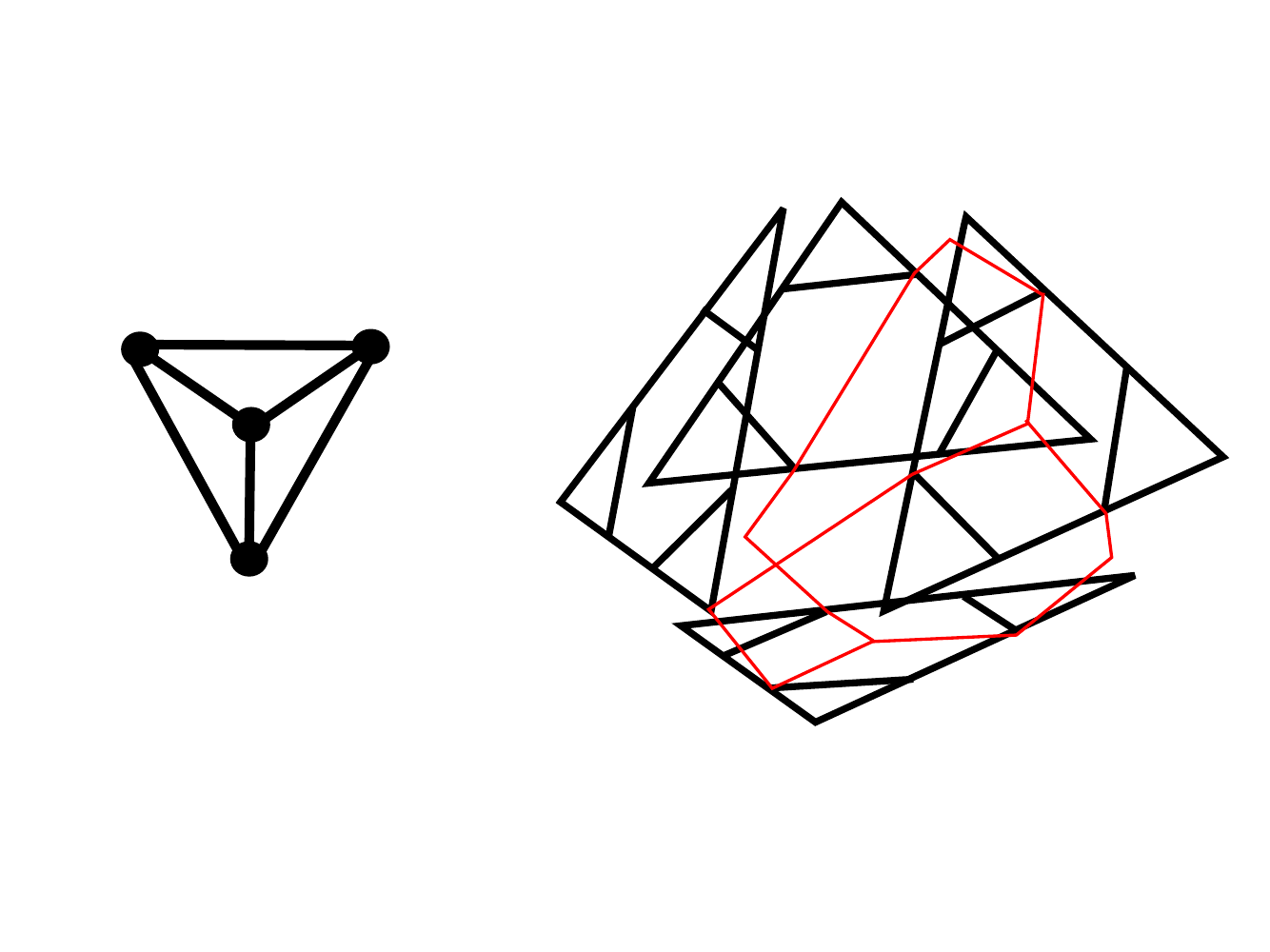}
\caption{A BZ quilt with dual genus $3$ graph.}
\label{quilt2}
\end{figure}

\noindent
 We represent gluing triangles $T_1, T_2$ with matching boundary components as weighted graphs on composite diagrams, as in Figure \ref{quilt}. Paths at the meeting boundaries of $T_1, T_2$ are joined by an arrangement of weighted paths in an $X$ configuration. When the path has weight $1$, this is either a line (see the left corner of the quilt in Figure \ref{quilt}) or by a crooked line (see the right corner of the quilt in Figure \ref{quilt}).  The cones $BZ_{\tree}(SL_3(\C))$ are studied by the author and Zhou in \cite{MZ}, see also \cite{KM} for a relationship between Berenstein-Zelevinsky triangles and mathematical biology.  

We finish this subsection with an analysis of the generators of the semigroup algebra $\C[BZ_{\Gamma}(SL_2(\C))]$.  Recall from Example \ref{sl2example} that the semigroup $BZ_3(SL_2(\C))$ is the free semigroup on three generators, we depict an element of this semigroup as an arrangment of three types of paths in a dual trinode $\tau$, see Figure \ref{graphweight}. 

\begin{figure}[htbp]
\centering
\includegraphics[scale = 0.24]{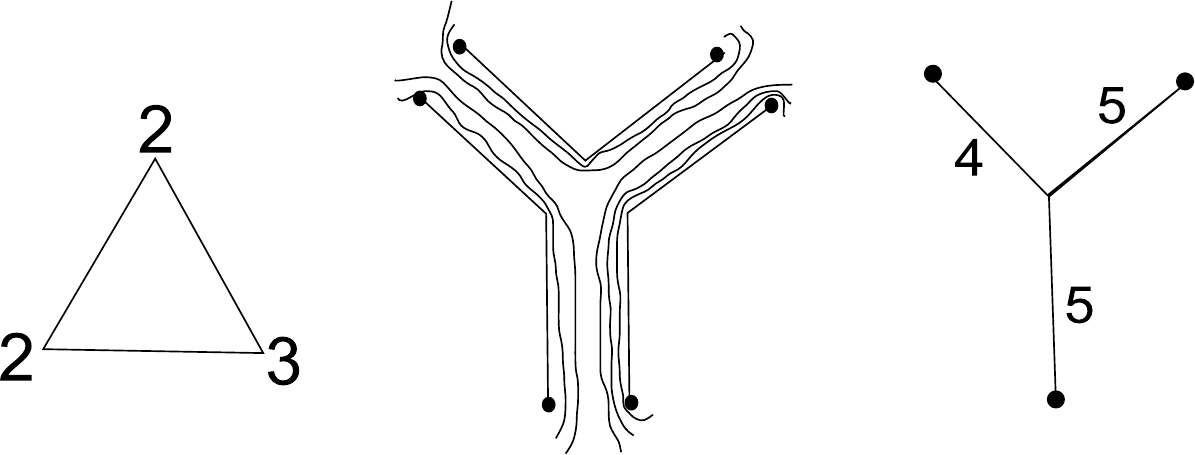}
\caption{Three different interpretations of an element of $BZ_3(SL_2(\C))$.}
\label{graphweight}
\end{figure}

For an element $T \in BZ_3(SL_2(\C))$, counting the number of endpoints in each edge $e, f, g$ of $\tau$ produces an integer weighting $w_T: \{e, f, g\} \to Z_{\geq 0}.$    We associate a planar arrangement of paths $P(w)$ in $\Gamma$, by replacing the weights at each trinode with paths as in Figure \ref{graphweight}. The endpoints of these paths in an edge $e$ are then connected in the unique planar way. A path $\gamma$ in the graph $\Gamma$ has an associated weighting $w_{\gamma}: E(\Gamma) \to Z_{\geq 0}$ obtained by setting $w_{\gamma}(e)$ equal to the number of times $\gamma$ passes through $e.$ For a more in depth account of this construction, see \cite{M16}.     

\begin{theorem}
The semigroup $BZ_{\Gamma}(SL_2(\C))$ is generated by the $w: E(\Gamma) \to \Z_{\geq 0}$
with $w(e) \leq 2$. 
\end{theorem}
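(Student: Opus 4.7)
The plan is to exploit the multi-curve interpretation established in the preceding discussion and reduce everything to a combinatorial surgery on closed walks in $\Gamma$. First I would note that every $w \in BZ_{\Gamma}(SL_2(\C))$ decomposes as $w = \sum_i w_{\gamma_i}$, where the $\gamma_i$ are the connected components of the planar path arrangement $P(w)$, and that each single-loop weight $w_{\gamma}$ lies in $BZ_{\Gamma}(SL_2(\C))$, since a closed walk in $\Gamma$ automatically satisfies the trinode triangle inequalities and the parity condition. It therefore suffices to show that any single-loop weight $w_{\gamma}$ with some $w_{\gamma}(e^*) \geq 3$ can itself be written as a sum of two weights in $BZ_{\Gamma}(SL_2(\C))$ with strictly smaller edge-weight at $e^*$ and no larger edge-weight anywhere else.

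The heart of the argument is a surgery on the loop $\gamma$. Label the $k = w_{\gamma}(e^*) \geq 3$ traversals of $e^*$ in cyclic order along $\gamma$ as occurrences $1, 2, \ldots, k$. Since $e^*$ has two endpoints, the pigeonhole principle guarantees that some two occurrences $j_1 < j_2$ traverse $e^*$ in the same direction. I would cut $\gamma$ apart at these two traversals and recombine to produce two new closed walks: let $\gamma_1$ be the concatenation of the pre-$j_1$ portion with the post-$j_2$ portion, joined by a single fresh traversal of $e^*$; let $\gamma_2$ be the middle portion (between occurrences $j_1$ and $j_2$), closed up by a single fresh traversal of $e^*$. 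The same-direction hypothesis ensures the endpoints of the fresh traversals match the cut portions, so $\gamma_1$ and $\gamma_2$ are genuine closed walks whose vertex-pair structure at every trinode is inherited directly from $\gamma$'s pair structure at occurrences $j_1$ and $j_2$. Counting crossings gives $w_{\gamma_1}(e^*) = k - (j_2 - j_1)$, $w_{\gamma_2}(e^*) = j_2 - j_1$, both strictly less than $k$, and $w_{\gamma_1}(e) + w_{\gamma_2}(e) = w_{\gamma}(e)$ for every edge $e$; in particular $w_{\gamma_1} + w_{\gamma_2} = w_{\gamma}$ in $BZ_{\Gamma}(SL_2(\C))$. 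Iterating this surgery on any remaining loop with an edge-weight $\geq 3$ strictly decreases that loop's largest edge-weight without growing any other, so after finitely many steps one obtains a decomposition $w = \sum_j w_{\gamma_j'}$ in which every $w_{\gamma_j'}$ satisfies $w_{\gamma_j'}(e) \leq 2$.

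The main obstacle is verifying that the surgery is well-defined at the level of the trinode combinatorics: the loop $\gamma$ may visit each endpoint of $e^*$ several times with different pairings of adjacent edges, and one must check that after splitting, the two pairings associated with occurrences $j_1$ and $j_2$ are distributed correctly between $\gamma_1$ and $\gamma_2$ and do not clash with the remaining pair structure at those vertices. The pigeonhole step is what resolves this, because choosing two equi-directional occurrences at $e^*$ forces the freshly inserted traversals in $\gamma_1$ and $\gamma_2$ to attach to the same "entering" and "exiting" edges already used by $\gamma$, so no new vertex pair ever has to be invented. Everything else is bookkeeping with edge-multiplicities, and termination is immediate from the strict decrease of $w_{\gamma}(e^*)$ under each surgery.
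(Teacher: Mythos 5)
Your proposal is correct and follows essentially the same strategy as the paper's proof: decompose $w$ into its constituent loops via $P(w)$, orient a problematic loop, use pigeonhole to locate two traversals of the heavy edge pointing in the same direction, cut and recombine into two shorter loops, and induct on total weight. Your write-up is somewhat more explicit than the paper's (in spelling out the pigeonhole step, the weight bookkeeping $w_{\gamma_1}(e^*) = k - (j_2 - j_1)$, $w_{\gamma_2}(e^*) = j_2 - j_1$, and the observation that any closed walk's edge-count automatically satisfies the trinode triangle and parity conditions), but it is the same argument.
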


\begin{figure}[htbp]
\centering
\includegraphics[scale = 0.3]{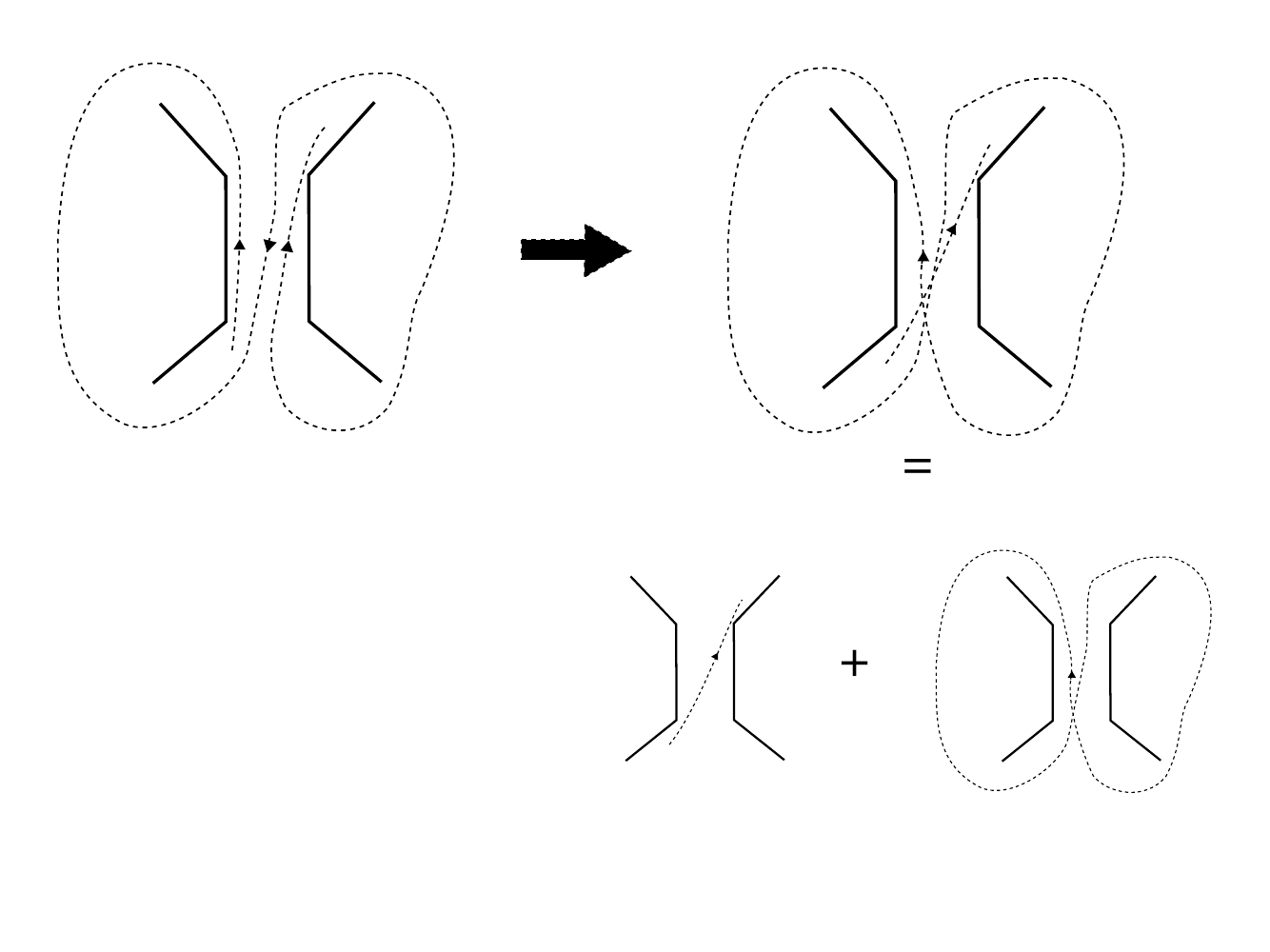}
\caption{Using an orientation to factor off a loop.}
\label{graphproof}
\end{figure}

\begin{proof}
Fix a $w \in BZ_{\Gamma}(SL_2(\C))$, and consider the induced planar arrangement
of paths $P(w)$ in $\Gamma$ with multiweight $w.$  Suppose $w(e) > 2$ for some edge
$e \in E(\Gamma).$ We pick a path $\gamma \in P(w)$ which passes through $e$. If $\gamma$
passes through $e$ with weight $1$, then we may remove $w_{\gamma}$ to obtain a weighting $w'$
with strictly smaller total weight $\sum_{f \in E(\Gamma)} w'(f).$  

If $\gamma$ weights $e$ greater than $1$, we assign an orientation to $\gamma.$   If two components at $e$ have the same direction, we may alter the weighting as in Figure \ref{graphproof}, yielding two closed paths $\gamma' \cup \gamma''.$ Without loss of generality we assume that $P(w) = \{\gamma\},$ so that the weightings $w_{\gamma'}$ and $w_{\gamma''}$ satisfy $w_{\gamma'} + w_{\gamma''} = w.$  In this case we pull off the new closed path $\gamma',$ which has strictly smaller total weight. If $w(e) > 2$ at least two components through $e$ must have the same direction.
\end{proof}

As a corollary, a set of regular functions in $\C[\mathcal{X}(F_g, SL_2(\C))]$ which represent the  $w \in BZ_{\Gamma}(SL_2(\C))$ with $w(e) \leq 2$ form a finite generating set.  Such a set is said to be a finite ``Khovanskii basis'' of the filtration defined by $\Gamma.$

\subsection{$G_2$}

We give inequalities for the tensor product cones for $G_2$, with $R(w_0) = \{\alpha_1\alpha_2\alpha_1\alpha_2\alpha_1\alpha_2,  \alpha_2\alpha_1\alpha_2\alpha_1\alpha_2\alpha_1\}.$ These cones have six string parameters  $t_1, t_2, t_3,$ $t_4, t_5, t_6,$ and six weight parameters $\lambda = (\lambda_1, \lambda_2), \eta = (\eta_1, \eta_2), \mu = (\mu_1, \mu_2)$.  The cone $C_{\alpha_1\alpha_2\alpha_1\alpha_2\alpha_1\alpha_2}(3)$ is defined by the following inequalities: 

\begin{equation}
6t_2 \geq 2t_3 \geq 3t_4 \geq 2t_5 \geq 6t_6 \geq 0; \ \  \lambda_2 \geq 2t_6\\
\end{equation}

\begin{equation}
\eta_1 \geq t_1 - 3t_2 - t_3 - 3t_4 - t_5 - 3t_6, \ \ t_3 - 3t_4 - t_5 - 3t_6, \ \ t_5 - 3t_6\\
\end{equation}

\begin{equation}
\eta_2 \geq t_6, \ \ t_4 - t_5 - 3t_6, \ \ t_2 - t_3 - 3t_4 - t_5 - 3t_6\\
\end{equation}

\begin{equation}
2t_1 - 3t_2 + 2t_3 -3t_4 + 2t_5 -3t_6 = \lambda_1 + \eta_1 - \mu_1\\
\end{equation}

$$ -t_1 +2t_2 -t_3 +2t_4 -t_5 +2t_6 = \lambda_2 + \eta_2 - \mu_2$$

The alternative cone $C_{\alpha_2\alpha_1\alpha_2\alpha_1\alpha_2\alpha_1}(3)$ is defined by the following inequalities: 

\begin{equation}
2t_2 \geq 2t_3 \geq t_4 \geq 2t_5 \geq 2t_6 \geq 0\\
\end{equation}

\begin{equation}
\lambda_1 \geq t_6; \ \ \lambda_2 \geq t_2 + t_4 - t_5, \ \ t_2 + t_5 - t_6,  \ \ t_3 - t_4 - t_6,  \ \ t_5 - 3t_6, \\ 
\end{equation}

$$t_2 + t_3 -2t_4,  \ \ 2t_2 - t_4, \ \ 3t_2 - t_3, \ \ t_3 - t_5, \ \ t_4 - 2t_6, \ \ 2t_4 - t_5 - t_6, \ \ 3t_4 - 2t_5$$

\begin{equation}
\eta_1 \geq t_6, \ \ t_4 -3t_5 - t_6, \ \ t_2 - 3t_3 - t_4 - 3t_5 - t_6\\
\end{equation}

\begin{equation}
\eta_2 \geq t_5 - t_6, \ \  t_3 - t_4 -3t_5- t_6,  \ \ t_1 - t_2 - 3t_3 - t_4 - 3t_5 - t_6\\
\end{equation}

\begin{equation}
2(t_2 + t_4 +t_6) - 3(t_1 + t_3 + t_5) = \lambda_1 + \eta_1 - \mu_1\\
\end{equation}

$$2(t_1 + t_3 + t_5) - (t_2 + t_4 + t_6) = \lambda_2 + \eta_2 - \mu_2$$

\subsection{$Sp_4$}

We give the inequalities for the $Sp_4$ tensor product cones corresponding to the decompositions $R(w_0) = \{\alpha_1\alpha_2\alpha_1\alpha_2, \alpha_2\alpha_1\alpha_2\alpha_1\}$.  The cone $C_{\alpha_1\alpha_2\alpha_1\alpha_2}(3)$ is defined by the following inequalities:

\begin{equation}
t_2 \geq t_3 \geq t_4,\\
\end{equation}

\begin{equation}
\lambda_2 \geq t_4, \ \ 2t_3 - t_2,\ \ t_2 - 2t_1; \ \ \lambda_1 \geq t_1\\
\end{equation}

\begin{equation}
\eta_2 \geq t_2 + 2t_4 - 2t_3, \ \ t_4; \ \ \eta_1 \geq t_1 + 2t_3 - 2t_2,  \ \ t_3 - t_2\\
\end{equation}

\begin{equation}
2t_1 -t_2 +2t_3 -t_4 = \lambda_1 +\eta_1 - \mu_1\\
\end{equation}

$$-2t_1 +2t_2 -2t_3 +2t_4 = \lambda_2 + \eta_2 - \mu_2$$

The cone $C_{\alpha_2\alpha_1\alpha_2\alpha_1}(3)$ is defined by the following inequalities:

\begin{equation}
t_2 \geq t_3 \geq t_4 \geq 0 \ \ ; \lambda_2 \geq 2t_1, t_2; \ \ \lambda_1 \geq 2t_1\\
\end{equation}

\begin{equation}
\eta_1 \geq t_3 -t_4, \ \ t_1 -t_2 + 2t_3; \ \ \eta_2 \geq t_2 - 2t_3 + 2t_4,  \ \ t_4\\
\end{equation}

\begin{equation}
t_1  + t_2 + t_3 + t_4 = \lambda_1 +\eta_1 - \mu_1; \ \ t_2 + t_4 - t_1 - t_3 = \lambda_2 + \eta_2 - \mu_2
\end{equation}

\bibliographystyle{alpha}
\bibliography{Biblio}

\bigskip
\noindent
Christopher Manon:\\
Department of Mathematics,\\ 
George Mason University,\\ 
Fairfax, VA 22030 USA

\end{document}